\documentclass[11pt,a4paper]{article}
\usepackage{amsmath}
\usepackage[a4paper]{geometry}
\usepackage{amssymb,latexsym,amsmath,amsfonts,amsthm}
\usepackage{graphicx}
\usepackage{algorithm}
\usepackage{algorithmic}
 \usepackage{dsfont}
\usepackage{mathrsfs}
\usepackage{epsfig}
\usepackage{booktabs}
\usepackage{mathtools}
\usepackage{comment,verbatim}
\usepackage{overpic}
\usepackage{hyperref}
\usepackage{bbm}
\usepackage[toc,page]{appendix}

\renewcommand{\vec}{\mathbf}

\newtheorem{theorem}{Theorem}[section]
\newtheorem{lemma}[theorem]{Lemma}

\newtheorem{corollary}[theorem]{Corollary}

\theoremstyle{definition}

\newtheorem{notation}[theorem]{Notation}

\theoremstyle{definition}

\theoremstyle{remark}

\newtheorem{remark}[theorem]{Remark}

\numberwithin{equation}{section}

\hyphenation{pa-ra-me-tri-za-tion}

\usepackage{color}

\usepackage[normalem]{ulem}

\hyphenation{pa-ra-me-tri-za-tion}

\def\XXint#1#2#3{{
\setbox0=\hbox{$#1{#2#3}{\int}$}
\vcenter{\hbox{$#2#3$}}\kern-.5\wd0}}

\begin{document}
\title{A unified framework for asymptotic analysis and computation of finite Hankel transform} 
\author{Haiyong Wang\footnotemark[2]~\footnotemark[3]}

\maketitle
\renewcommand{\thefootnote}{\fnsymbol{footnote}}

\footnotetext[2]{School of Mathematics and Statistics, Huazhong
University of Science and Technology, Wuhan 430074, P. R. China.
E-mail: \texttt{haiyongwang@hust.edu.cn}}

\footnotetext[3]{Hubei Key Laboratory of Engineering Modeling and
Scientific Computing, Huazhong University of Science and Technology,
Wuhan 430074, China.}

\begin{abstract}
In this paper we present a unified framework for asymptotic analysis
and computation of the finite Hankel transform. This framework
enables us to derive asymptotic expansions of the transform,
including the cases where the oscillator has zeros and stationary
points. As a consequence, two efficient and affordable methods for
computing the transform numerically are developed and a detailed
analysis of their asymptotic error estimate is carried out.
Numerical examples are
provided to confirm our analysis. 
\end{abstract}

{\bf Keywords:} finite Hankel transform, oscillatory integrals,
asymptotic expansion.

\vspace{0.05in}

{\bf Mathematics Subject Classification} 65D30, 65D32, 65R10, 41A60.

\section{Introduction}\label{sec:introduction}
The finite Hankel transform of a function $f: [a,b]\rightarrow
\mathbb{R}$ with a general oscillator $g$ is defined by
\begin{align}\label{eq:Hankel}
\mathcal{H}_{\nu}[f] := \int_{a}^{b} f(x) J_{\nu}(\omega g(x)) dx,
\end{align}
where $J_{\nu}(x)$ is the Bessel function of the first kind of order
$\nu$ and $\omega>0$ is a large parameter. This transform plays an
important role in many physical problems such as boundary value
problems formulated in cylindrical coordinate
\cite{poularikas2010transforms}, time dependent Schr\"{o}dinger
equation \cite{bisseling1985fast}, electromagnetic scattering
\cite{bao2005fast} and geometrical acoustics
\cite{boucher2017phased}.

Closed forms for the finite Hankel transform are rarely available in
general and, except in a few special cases, numerical methods are
required to evaluate the transform accurately. However, the
calculation with traditional quadrature rules may be prohibitively
expensive due to the oscillatory character of the Bessel function.
Roughly speaking, when $\omega\gg1$, a fixed number of quadrature
nodes per oscillation is required to obtain an acceptable level of
accuracy, which makes the complexity grows linearly with $\omega$.

In the past decade, the evaluation of highly oscillatory integrals
has been received considerable attention. In particular, for
Fourier-type integrals of the form
\begin{align}\label{eq:Fourier}
\int_{a}^{b} f(x) e^{i\omega g(x)} dx,
\end{align}
several efficient methods such as asymptotic method
\cite{iserles2004quadrature,iserles2005efficient}, Filon-type
methods
\cite{iserles2004quadrature,iserles2005efficient,xiang2007efficient},
Levin-type methods \cite{levin1982procedure,olver2006moment},
numerical steepest descent methods
\cite{deano2009complex,huybrechs2006evaluation}, complex Gaussian
quadrature \cite{asheim2014gauss,asheim2013complex} and their
different combinations or adaptive methods
\cite{dominguez2011stability,gao2017efficient,huybrechs2012super,zhao2017adaptive}
have been developed to compute the value of \eqref{eq:Fourier} with
low computational cost. Among these methods, {\it asymptotic method}
plays a fundamental role in clarifying these critical points, e.g.,
endpoints and stationary points, to the value of the integral and
serves as a starting point of designing other efficient methods. We
refer to the recent monograph \cite{deano2017highly} for a
comprehensive survey.

In recent years, there has been a growing  
interest in studying numerical quadrature of the finite Hankel
transform \eqref{eq:Hankel}; see, for instance,
\cite{chen2012numerical,chen2015numerical,levin1996fast,olver2007numerical,piessens1983modified,wang2011asymptotic,xiang2007numerical,xiang2011clenshaw,xiang2008numerical,xu2016efficient}.
All these works focus on the simplest oscillator $g(x)=x$ and
asymptotic analysis has been carried out either by using the
following differential equation
\cite{olver2007numerical,xiang2008numerical}
\begin{align}\label{eq:VectODE}
\mathbf{w}{'}(x) = \mathbf{A}(\omega,x) \mathbf{w}(x),
\end{align}
where
\[
\mathbf{w}(x) = \binom{J_{\nu-1}(\omega x)}{J_{\nu}(\omega x)},
\quad \mathbf{A}(\omega,x) = \left(
                                                                     \begin{array}{cc}
                                                                       \frac{\nu-1}{x} &  -\omega        \\
                                                                       \omega          & -\frac{\nu}{x}  \\
                                                                     \end{array}
                                                                   \right),
\]
or by using the well-known identity
\cite{wong1976error,xiang2010fast}
\begin{align}\label{eq:DE}
\frac{d}{dz} (z^{\nu+1} J_{\nu+1}(z)) = z^{\nu+1} J_{\nu}(z), \quad
z,\nu\in \mathbb{C}.
\end{align}
For the Hankel transform with a general oscillator $g(x)$, very few
studies have been conducted so far in the literature, except in the
very special case where $g^{(j)}(a)=0$ for $j=0,\ldots,r$ and $r$ is
some positive integer \cite{xiang2010fast}.

In this work, we aim to provide a complete asymptotic analysis and
the construction of affordable quadrature rules for the finite
Hankel transform with a general oscillator. Intuitively speaking,
the main contribution to the value of the transform comes from the
following three types of {\it critical points}:
\begin{enumerate}
\item Endpoints of the interval of integration, i.e., $x=a$ and
$x=b$;

\item Zeros of the oscillator $\xi\in[a,b]$ where $g(\xi)=0$;

\item Stationary points of the oscillator $\zeta\in[a,b]$ where $g{'}(\zeta) =
0$.
\end{enumerate}
For each type of critical point listed above, we will explore the
asymptotic expansion of the transform. Note that the asymptotic
analysis based on \eqref{eq:VectODE} fails when the oscillator
$g(x)$ has zeros or stationary points on the integration interval.
We will use the identity \eqref{eq:DE} and show that it is well
suited for obtaining the asymptotic expansion of the finite Hankel
transform. This is not trivial and requires more in-depth
theoretical analysis. With these expansions in hand, we further
develop two methods for computing the transform numerically and a
rigorous analysis of their error is discussed.

The main results of the present paper can be summarized as follows:
\begin{itemize}
\item We provide a unified framework for asymptotic analysis
of the finite Hankel transform. It enables us to obtain asymptotic
expansions of the transform, especially in the presence of
stationary points.

\item Two methods for computing the transform are
discussed and a detailed error analysis is performed. Compared to
the method in \cite{xiang2010fast}, the method proposed here has the
advantage of avoiding the use of diffeomorphism transformations
which may be inconvenient in practice.

\item We show that our analysis can be extended to the multivariate
setting.
\end{itemize}


The structure of this paper is as follows. In the next section, we
start with the ideal case where the oscillator $g(x)$ is free of
stationary points, i.e., $g{'}(x)\neq 0$ for $x\in[a,b]$. We
distinguish our analysis into two cases according to $g(x)$ has a
zero on $[a,b]$ or not. In addition, two methods for evaluating the
transform numerically are developed and the behavior of their error
is discussed thoroughly. In section \ref{sec:Stationary} we deal
with the case where $g(x)$ has a stationary point on the integration
interval. The extension of our results is discussed in section
\ref{sec:MultiHankel} and some concluding remarks are presented in
section \ref{sec:con}.

\begin{notation}
Throughout the paper, we denote the generalized moments of the
Hankel transform $\mathcal{H}_{\nu}[f]$ by
\begin{align}
M_j(\zeta,\nu,\omega) = \mathcal{H}_{\nu}[(x-\zeta)^j] =
\int_{a}^{b} (x-\zeta)^j J_{\nu}(\omega g(x)) dx,
\end{align}
where $\zeta\in[a,b]$ and $j\geq0$. When $j=0$, note that
$M_0(\zeta,\nu,\omega)$ does not depend on $\zeta$ anymore. Instead,
we define $M(\nu,\omega) = M_0(\zeta,\nu,\omega)$.
\end{notation}

\section{The case with no stationary points}\label{sec:FreeStationary}
In this section we first consider the ideal case that the oscillator
$g(x)$ has no stationary points in the integration interval $[a,b]$,
i.e., $g{'}(x)\neq 0$ for $x\in [a,b]$. We commence our analysis
from the case that $g(x)\neq0$ for $x\in[a,b]$.

\subsection{The case $g(x)\neq0$ for $x\in[a,b]$}
We now state the first main result on the asymptotic expansion of
$\mathcal{H}_{\nu}[f]$.
\begin{theorem}\label{thm:AsympCaseOne}
Let $f(x),g(x)\in C^{\infty}[a,b]$ and $g(x)\neq0$, $g{'}(x)\neq0$
for $x\in[a,b]$. Then, for $\omega\rightarrow\infty$,
\begin{align}\label{eq:AsymCaseOne}
\mathcal{H}_{\nu}[f] \sim  - \sum_{k=1}^{\infty}
\frac{1}{(-\omega)^{k}} \left\{ \frac{\sigma_{k-1}[f](b)}{g{'}(b)}
J_{\nu+k}(\omega g(b)) - \frac{\sigma_{k-1}[f](a)}{g{'}(a)}
J_{\nu+k}(\omega g(a))  \right\},
\end{align}
where $\sigma_{k}[f](x)$ are defined by
\begin{align}\label{def:SigmaCase1}
\sigma_0[f](x) = f(x), \quad \sigma_k[f](x) = g(x)^{\nu+k}
\frac{d}{dx}\left[\frac{\sigma_{k-1}[f](x)}{g(x)^{\nu+k} g{'}(x)}
\right], \quad k\geq 1.
\end{align}
\end{theorem}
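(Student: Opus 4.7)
The plan is to prove the expansion by a repeated integration by parts built on the identity \eqref{eq:DE}. Combined with the chain rule, that identity rewrites $J_\nu(\omega g(x))$ as an exact $x$-derivative,
\[
J_\nu(\omega g(x)) \;=\; \frac{1}{\omega\, g(x)^{\nu+1} g{'}(x)} \,\frac{d}{dx}\!\left[\,g(x)^{\nu+1} J_{\nu+1}(\omega g(x))\right],
\]
and the hypotheses $g(x)\neq 0$, $g{'}(x)\neq 0$ on $[a,b]$ make the prefactor smooth and bounded. Integrating by parts once thus trades a factor of $\omega$ for a lifting of the Bessel index from $\nu$ to $\nu+1$, plus a boundary contribution.

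First I would execute this single step: insert the identity into $\mathcal{H}_\nu[f]$ and integrate by parts. The boundary piece works out to exactly the $k=1$ summand of \eqref{eq:AsymCaseOne} (using $-1/\omega = 1/(-\omega)$), while the residual integral takes the form
\[
-\,\frac{1}{\omega}\int_a^b \sigma_1[f](x)\, J_{\nu+1}(\omega g(x))\, dx,
\]
with $\sigma_1[f]$ agreeing with the $k=1$ case of \eqref{def:SigmaCase1}. Structurally, the residual is another finite Hankel transform with the substitutions $\nu\mapsto\nu+1$ and $f\mapsto\sigma_1[f]$, so the same step applies again.

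Next I would iterate $N$ times. By induction on $k$, the $k$th iteration produces the $k$th term of \eqref{eq:AsymCaseOne} and leaves a remainder
\[
R_N \;=\; \frac{(-1)^N}{\omega^N}\int_a^b \sigma_N[f](x)\, J_{\nu+N}(\omega g(x))\, dx.
\]
Since $g$ and $g{'}$ are bounded away from zero and $f,g\in C^\infty[a,b]$, the recursion \eqref{def:SigmaCase1} shows that $\sigma_k[f]\in C^\infty[a,b]$ for every $k$, so $\sigma_N[f]$ is uniformly bounded on $[a,b]$. Combined with the elementary bound $|J_{\nu+N}(y)|\le C_{\nu+N}$ for $y\geq 0$ (or, more sharply, $|J_{\nu+N}(\omega g(x))|=O(\omega^{-1/2})$ since $g$ is bounded away from zero), one obtains $R_N=O(\omega^{-N})$ as $\omega\to\infty$, which is precisely what is needed to confirm the expansion in the asymptotic sense.

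The main obstacle is largely organizational: one must check that the differential operator produced at each integration-by-parts step reproduces the recursion \eqref{def:SigmaCase1} exactly, and the alternating signs have to be tracked carefully so that the $k$th coefficient emerges as $-1/(-\omega)^k$. A secondary technicality is that $\nu$ is not assumed to be an integer, so $g(x)^{\nu+1}$ needs a branch choice; the constant sign of $g$ on $[a,b]$ makes any such choice unambiguous, and since $g^{\nu+1}$ enters $\sigma_k[f]$ only through the combination $g^{\nu+k}\cdot (g^{\nu+k})^{-1}$, any overall branch constant cancels, so the final formula is independent of this choice.
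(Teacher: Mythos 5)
Your proposal is correct and follows essentially the same route as the paper: repeated integration by parts driven by the identity \eqref{eq:DE}, with the boundary terms generating the series and the residual integral being a Hankel transform of $\sigma_{k}[f]$ with index $\nu+k$. The only difference is cosmetic — you make the remainder bound $R_N=\mathcal{O}(\omega^{-N})$ (or $\mathcal{O}(\omega^{-N-\frac{1}{2}})$) explicit, whereas the paper simply lets $m\rightarrow\infty$ in its finite identity \eqref{eq:expansion one}; your added care here, and the remark on the branch of $g(x)^{\nu+k}$, are both sound.
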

\begin{proof}
We shall first prove by induction on $m$ the following identity
\begin{align}\label{eq:expansion one}
\mathcal{H}_{\nu}[f] &= - \sum_{k=1}^{m} \frac{1}{(-\omega)^{k}}
\left\{ \frac{\sigma_{k-1}[f](b)}{g{'}(b)} J_{\nu+k}(\omega g(b)) -
\frac{\sigma_{k-1}[f](a)}{g{'}(a)}
J_{\nu+k}(\omega g(a))  \right\} \nonumber \\
&~~~~~~~~~~ + \frac{1}{(-\omega)^m}
\mathcal{H}_{\nu+m}[\sigma_{m}[f]].
\end{align}
When $m=0$, it is trivial. 
Now suppose \eqref{eq:expansion one} is true for $m$, we wish to
prove it for $m+1$.

By taking $z = \omega g(x)$ and replacing $\nu$ by $\nu+m$ in
\eqref{eq:DE}, we obtain
\begin{align}\label{eq:DerivOscillator}
\frac{d}{dx}\left[ g(x)^{\nu+m+1} J_{\nu+m+1}(\omega g(x)) \right] =
\omega g{'}(x) g(x)^{\nu+m+1} J_{\nu+m}(\omega g(x)).
\end{align}
Inserting this into $\mathcal{H}_{\nu+m}[\sigma_{m}[f]]$ and using
integration by parts we find
\begin{align*}
\mathcal{H}_{\nu+m}[\sigma_m[f]] &= \int_{a}^{b} \sigma_{m}[f](x)
J_{\nu+m}(\omega g(x)) dx \nonumber \\
&= \frac{1}{\omega} \int_{a}^{b} \frac{\sigma_{m}[f](x)}{
g(x)^{\nu+m+1} g{'}(x)}
\frac{d}{dx}\left[ g(x)^{\nu+m+1} J_{\nu+m+1}(\omega g(x)) \right] dx \nonumber \\
&= \frac{1}{\omega} \left[ \frac{\sigma_{m}[f](b)}{g{'}(b)}
J_{\nu+m+1}(\omega g(b)) - \frac{\sigma_{m}[f](a)}{g{'}(a)}
J_{\nu+m+1}(\omega g(a)) \right] \nonumber  \\
&~~~~~~~ -\frac{1}{\omega} \int_{a}^{b} g(x)^{\nu+m+1}
\frac{d}{dx}\left[\frac{\sigma_{m}[f](x)}{g(x)^{\nu+m+1} g{'}(x)}
\right]
J_{\nu+m+1}(\omega g(x)) dx \\
&= \frac{1}{\omega} \left[ \frac{\sigma_{m}[f](b)}{g{'}(b)}
J_{\nu+m+1}(\omega g(b)) - \frac{\sigma_{m}[f](a)}{g{'}(a)}
J_{\nu+m+1}(\omega g(a)) \right] \nonumber  \\
&~~~~~~~ -\frac{1}{\omega} \mathcal{H}_{\nu+m+1}[\sigma_{m+1}[f]],
\end{align*}
which, together with \eqref{eq:expansion one}, implies that
\begin{align}
\mathcal{H}_{\nu}[f] &= - \sum_{k=1}^{m+1} \frac{1}{(-\omega)^{k}}
\left\{ \frac{\sigma_{k-1}[f](b)}{g{'}(b)} J_{\nu+k}(\omega g(b)) -
\frac{\sigma_{k-1}[f](a)}{g{'}(a)}
J_{\nu+k}(\omega g(a))  \right\} \nonumber \\
&~~~~~~~~~~ + \frac{1}{(-\omega)^{m+1}}
\mathcal{H}_{\nu+m+1}[\sigma_{m+1}[f]].  \nonumber
\end{align}
Hence the identity \eqref{eq:expansion one} is also true for $m+1$.
By induction, we conclude that \eqref{eq:expansion one} is true for
all integers $m\geq0$.

Finally, the desired result \eqref{eq:AsymCaseOne} follows by
letting $m\rightarrow\infty$ in \eqref{eq:expansion one}. This
completes the proof.
\end{proof}

As a direct consequence of this theorem, we obtain the following
expansion for the simplest case $g(x)=x$.
\begin{corollary}\label{cor:IdealAsym}
In the case $g(x)=x$ and $b>a>0$ or $a<b<0$, then 
\begin{align}\label{eq:IdealAsym}
\mathcal{H}_{\nu}[f] 
\sim -
\sum_{k=1}^{\infty} \frac{1}{(-\omega)^{k}} \bigg\{
\sigma_{k-1}[f](b) J_{\nu+k}(\omega b) - \sigma_{k-1}[f](a)
J_{\nu+k}(\omega a) \bigg\},
\end{align}
where
\begin{align}
\sigma_{0}[f](x) = f(x), \qquad \sigma_{k}[f](x) &= x^{\nu+k}
\frac{d}{dx}\left[\frac{\sigma_{k-1}[f](x)}{x^{\nu+k}} \right],
\quad k\geq1. \nonumber
\end{align}
\end{corollary}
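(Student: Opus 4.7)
The plan is to apply Theorem \ref{thm:AsympCaseOne} directly with the specific oscillator $g(x) = x$, so the proof should amount to little more than verifying hypotheses and specialising notation. First I would check that the assumptions of the theorem are met: since $g(x) = x$ gives $g'(x) \equiv 1$, the non-vanishing of $g'$ on $[a,b]$ is automatic, and the two sign conditions $b > a > 0$ or $a < b < 0$ guarantee that $g(x) = x \neq 0$ throughout $[a,b]$. Thus Theorem \ref{thm:AsympCaseOne} applies with $\nu$ unchanged.

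Next I would substitute $g(x) = x$ and $g'(x) = 1$ into the definition \eqref{def:SigmaCase1}: the base case $\sigma_0[f](x) = f(x)$ is unchanged, and for $k \geq 1$ the recursion collapses to
\begin{equation*}
\sigma_k[f](x) = x^{\nu+k}\, \frac{d}{dx}\!\left[\frac{\sigma_{k-1}[f](x)}{x^{\nu+k}}\right],
\end{equation*}
which matches the statement of the corollary. Similarly, the boundary factors $\sigma_{k-1}[f](b)/g'(b)$ and $\sigma_{k-1}[f](a)/g'(a)$ in \eqref{eq:AsymCaseOne} reduce to $\sigma_{k-1}[f](b)$ and $\sigma_{k-1}[f](a)$, and $J_{\nu+k}(\omega g(b))$, $J_{\nu+k}(\omega g(a))$ become $J_{\nu+k}(\omega b)$, $J_{\nu+k}(\omega a)$, yielding exactly \eqref{eq:IdealAsym}.

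There is no real obstacle here, as the corollary is a direct specialisation. The only subtle point worth flagging explicitly is the sign hypothesis on the interval: if $[a,b]$ contained the origin then $x^{\nu+k}$ would vanish (or be singular, depending on $\nu$) inside $[a,b]$, the quantities $\sigma_k[f](x)$ would not be well-defined on all of $[a,b]$, and $g$ would have a zero in the integration interval, taking us outside the regime of Theorem \ref{thm:AsympCaseOne}. The assumption $b > a > 0$ or $a < b < 0$ is precisely what rules this out, so the specialisation is valid and the proof reduces to the substitution just described.
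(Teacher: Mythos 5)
Your proposal is correct and is exactly how the paper obtains this result: the corollary is stated as a direct consequence of Theorem \ref{thm:AsympCaseOne}, with the hypotheses $b>a>0$ or $a<b<0$ ensuring $g(x)=x\neq0$ on $[a,b]$ and $g'(x)=1$ trivially nonvanishing, after which the substitution $g(x)=x$, $g'(x)=1$ in \eqref{eq:AsymCaseOne} and \eqref{def:SigmaCase1} gives \eqref{eq:IdealAsym}. Your additional remark explaining why the sign condition is needed is a sensible clarification but does not change the argument.
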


What is the dependence of $\sigma_{k}[f](x)$ on $f(x)$ and its
derivatives? After some algebraic manipulations, the first few terms
are given explicitly by
\begin{align}\label{eq:ExpFormSigma}
\sigma_0[f](x) &= f, \nonumber \\
\sigma_1[f](x) &= \frac{1}{g{'}} f{'} - \frac{g g{''} + (\nu+1)
(g{'})^2}{g (g{'})^2} f, \\
\sigma_2[f](x) &= \frac{1}{(g{'})^2} f{''} - \left(
\frac{3g{''}}{(g{'})^3} + \frac{2\nu+3}{g{'}g} \right) f{'}
\nonumber
\\
&~~~~~ - \left( \frac{g{'''}}{(g{'})^3} -
\frac{3(g{''})^2}{(g{'})^4} - \frac{(2\nu+3)g{''}}{(g{'})^2 g} -
\frac{(\nu+1)(\nu+3)}{g^2} \right) f,  \nonumber
\end{align}
and it is easy to verify that each $\sigma_k[f](x)$ is a linear
combination of $f(x),f'(x),\ldots,f^{(k)}(x)$ with coefficients are
rational functions of $g(x)$ and its derivatives.

If we truncate \eqref{eq:AsymCaseOne} after the first $m$ terms,
this results in an asymptotic method
\begin{align}\label{def:AsympMethod}
Q_m^{A}[f] = -\sum_{k=1}^{m} \frac{1}{(-\omega)^{k}} \left\{
\frac{\sigma_{k-1}[f](b)}{g{'}(b)} J_{\nu+k}(\omega g(b)) -
\frac{\sigma_{k-1}[f](a)}{g{'}(a)} J_{\nu+k}(\omega g(a)) \right\}.
\end{align}
A few remarks on the asymptotic method are in order.
\begin{remark}
By using the dependence relation \eqref{eq:ExpFormSigma}, it is easy
to verify that $Q_m^{A}[f]$ is a linear combination of the values
$f(a),f{'}(a),\ldots,f^{(m-1)}(a)$ and
$f(b),f{'}(b),\ldots,f^{(m-1)}(b)$.
\end{remark}

\begin{remark}
Under the assumption of neglecting the complexity of computing
Bessel functions $J_{\nu}(x)$\footnote{They can be computed
efficiently by using modern mathematical programming languages such
as Maple and Matlab.}, the asymptotic method $Q_m^{A}[f]$ can be
achieved at a low cost. For example, when $m=1$, we have
\[
Q_1^{A}[f] = \frac{1}{\omega} \left\{ \frac{f(b)}{g{'}(b)}
J_{\nu+1}(\omega g(b)) - \frac{f(a)}{g{'}(a)} J_{\nu+1}(\omega g(a))
\right\}.
\]
Clearly, we only need to operate on the values
$f(a),f(b),g{'}(a),g{'}(b)$. Moreover, as shown in Theorem
\ref{thm:ErrorAsymp} below, the accuracy of $Q_{m}^{A}[f]$ improves
substantially as $\omega$ increases.
\end{remark}

\begin{theorem}\label{thm:ErrorAsymp}
Under the same assumptions as in Theorem \ref{thm:AsympCaseOne}, we
have
\begin{align}
\mathcal{H}_{\nu}[f] - Q_m^{A}[f] =
\mathcal{O}(\omega^{-m-\frac{3}{2}}), \quad \omega \rightarrow
\infty.
\end{align}
\end{theorem}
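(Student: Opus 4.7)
The natural starting point is the finite-order identity \eqref{eq:expansion one} derived inside the proof of Theorem \ref{thm:AsympCaseOne}: subtracting $Q_m^{A}[f]$ from both sides gives the exact remainder
\begin{align*}
\mathcal{H}_{\nu}[f] - Q_m^{A}[f] = \frac{1}{(-\omega)^{m}}\,\mathcal{H}_{\nu+m}[\sigma_m[f]].
\end{align*}
Thus the problem reduces to proving that $\mathcal{H}_{\nu+m}[\sigma_m[f]] = \mathcal{O}(\omega^{-3/2})$ as $\omega\to\infty$, which would immediately yield the claimed order $\omega^{-m-3/2}$.

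To extract this extra factor of $\omega^{-3/2}$, I would apply the one-step integration-by-parts identity \eqref{eq:expansion one} (with $m=1$) once more, but now to $\mathcal{H}_{\nu+m}[\sigma_m[f]]$. This produces
\begin{align*}
\mathcal{H}_{\nu+m}[\sigma_m[f]] = \frac{1}{\omega}\!\left\{\frac{\sigma_m[f](b)}{g'(b)}J_{\nu+m+1}(\omega g(b)) - \frac{\sigma_m[f](a)}{g'(a)}J_{\nu+m+1}(\omega g(a))\right\} + \frac{1}{\omega}\,\mathcal{H}_{\nu+m+1}[\sigma_{m+1}[f]].
\end{align*}
Since $g\in C^\infty[a,b]$ with $g'$ and $g$ both non-vanishing on the compact interval $[a,b]$, the construction \eqref{def:SigmaCase1} makes $\sigma_{m+1}[f]$ a bona fide $C^\infty$ function, so every object above is well defined.

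The key analytic input is then the classical large-argument bound $J_{\mu}(z)=\mathcal{O}(z^{-1/2})$ as $z\to\infty$. Because $g(x)$ is bounded away from zero on $[a,b]$, we have $\omega g(x)\ge \omega\min_{[a,b]}|g|\to\infty$ uniformly in $x$, so that $|J_{\nu+m+1}(\omega g(x))|\le C\,\omega^{-1/2}$ uniformly on $[a,b]$. Plugging this into the previous display, the boundary contribution is bounded by $\omega^{-1}\!\cdot\!\mathcal{O}(\omega^{-1/2})=\mathcal{O}(\omega^{-3/2})$, and estimating the residual integral trivially by absolute value gives $\omega^{-1}\!\cdot\!\int_a^b |\sigma_{m+1}[f](x)|\,\mathcal{O}(\omega^{-1/2})\,dx = \mathcal{O}(\omega^{-3/2})$ as well. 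Combining these two estimates with the reduction in the first paragraph delivers the theorem.

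The argument is essentially a routine corollary of the machinery already assembled in Theorem \ref{thm:AsympCaseOne}, and I do not expect any genuine obstacle. The only point worth double-checking is the \emph{uniformity} of the Bessel bound: this relies crucially on the standing hypothesis $g(x)\neq 0$ on $[a,b]$, which keeps the argument of $J_{\nu+m+1}$ bounded away from the origin, so that the small-argument behaviour of $J_\mu$ never intrudes. Sharper decay (e.g.\ $\omega^{-m-2}$) could in principle be obtained by invoking the oscillation of $J_{\nu+m+1}(\omega g(x))$ via a further non-stationary phase argument, but that refinement is not needed for the stated $\omega^{-m-3/2}$ bound.
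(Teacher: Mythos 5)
Your proposal is correct and is essentially the paper's own argument: the paper reads the bound off the tail of the expansion \eqref{eq:AsymCaseOne} together with $J_{\mu}(z)=\mathcal{O}(z^{-1/2})$, and your exact-remainder formulation---identity \eqref{eq:expansion one} at level $m$, one further integration by parts, and the uniform Bessel bound (valid because $|g|$ is bounded away from zero on $[a,b]$)---is precisely the rigorous form of that tail estimate. The only slip is the sign of the residual term, which in the paper's identity appears as $-\tfrac{1}{\omega}\mathcal{H}_{\nu+m+1}[\sigma_{m+1}[f]]$ rather than with a plus sign; this is immaterial since you estimate it in absolute value.
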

\begin{proof}
From \eqref{eq:AsymCaseOne}, we obtain
\begin{align*}
\mathcal{H}_{\nu}[f] - Q_m^{A}[f] \sim   - \sum_{k=m+1}^{\infty}
\frac{1}{(-\omega)^{k}} \left\{ \frac{\sigma_{k-1}[f](b)}{g{'}(b)}
J_{\nu+k}(\omega g(b)) - \frac{\sigma_{k-1}[f](a)}{g{'}(a)}
J_{\nu+k}(\omega g(a)) \right\}.
\end{align*}
Note that $J_{\nu}(z) = \mathcal{O}(z^{-\frac{1}{2}})$ as
$z\rightarrow\infty$ \cite[Eqn.~10.17.2]{olver2010nist}. The desired
result follows.
\end{proof}

In Figure \ref{fig:Asymptotic method} we show an example where $f(x)
=\cos x$, $g(x)=x^2+x$, $\nu=1$ and $[a,b]=[1,2]$. We display the
error of $Q_{m}^{A}[f]$ in the left panel and the error scaled by
$\omega^{m+\frac{3}{2}}$ in the right panel. It is clear to see that
the accuracy of $Q_{m}^{A}[f]$ improves as $\omega$ increases and
the decay rate of the error of $Q_{m}^{A}[f]$ is
$\mathcal{O}(\omega^{-m-\frac{3}{2}})$, which confirms the result of
Theorem \ref{thm:ErrorAsymp}.

\begin{figure}[ht]
\centering
\includegraphics[width=7.2cm,height=6cm]{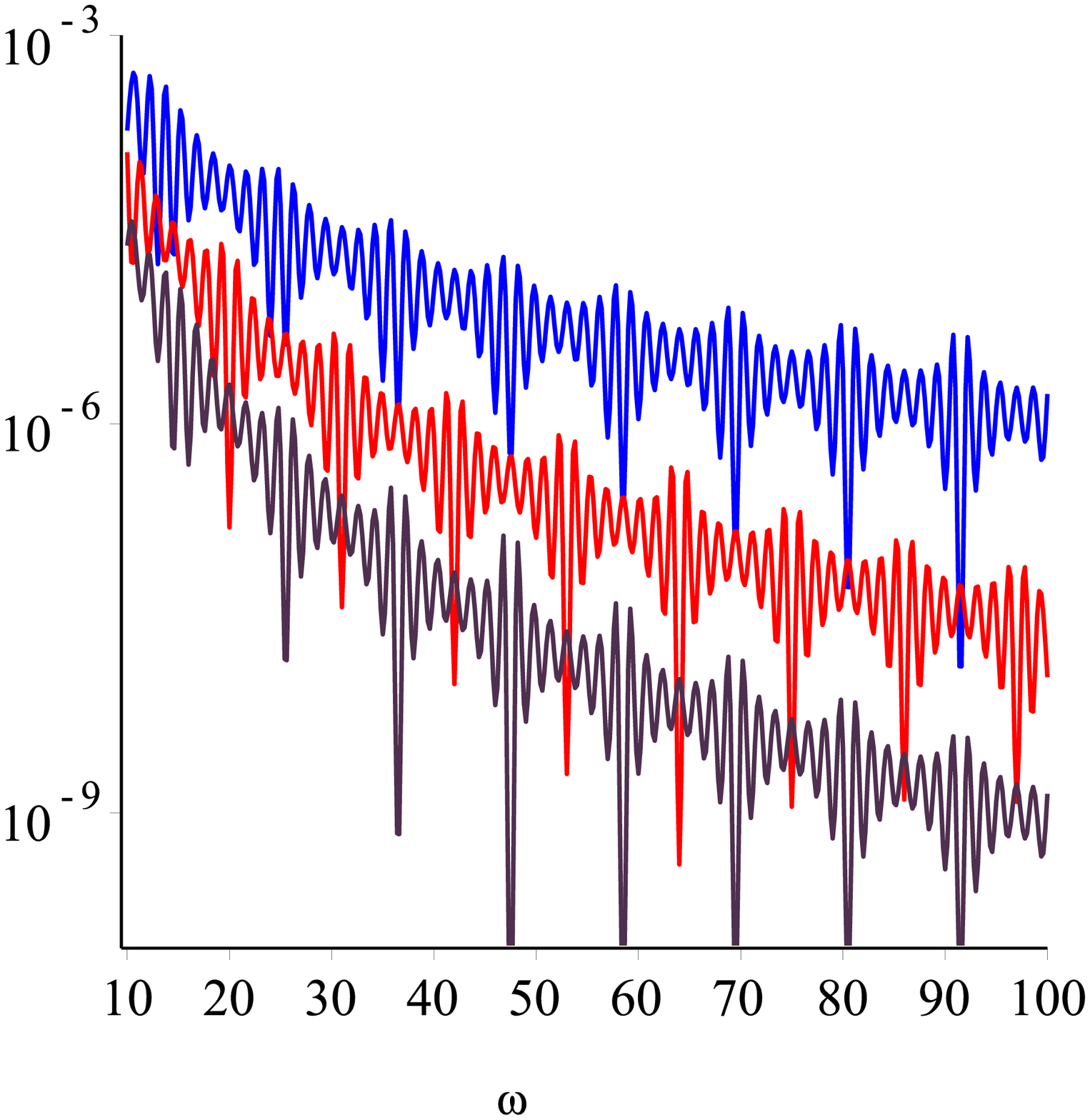}~~
\includegraphics[width=7.2cm,height=6cm]{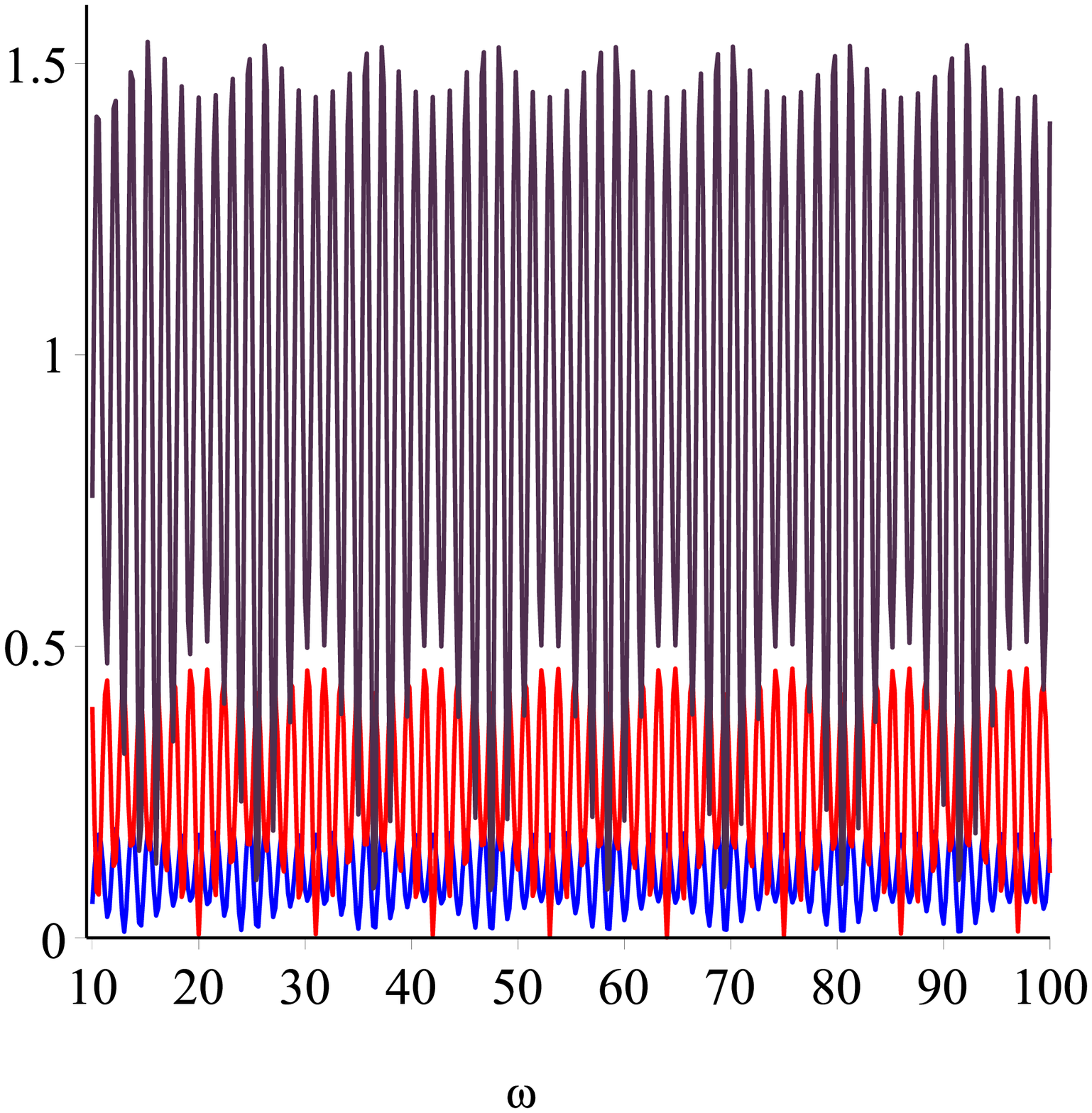}
\caption{The error of $Q_m^{A}[f]$ (left) and the error scaled by
$\omega^{m+\frac{3}{2}}$ (right). 
The blue, red, violet curves correspond to $m=1,2,3$, respectively.}
\label{fig:Asymptotic method}
\end{figure}

A major disadvantage of the asymptotic method is that the error is
essentially uncontrollable for a fixed $\omega$. To overcome this
drawback, we will 
propose a new Filon-type method which achieves the same error
estimate as the asymptotic method, whilst significantly improving
the accuracy. Before proceeding, let us establish a helpful lemma.
\begin{lemma}\label{lem:space}
Assume that $g(x)\in C^{\infty}[a,b]$ and $g{'}(x)\neq0$ for
$x\in[a,b]$. Let
\[
\varphi_k(x) = g{'}(x) g(x)^k, \quad k\geq0,
\]
and set
\[
\mathcal{E} = \left\{v(x)~\bigg|~ v(x) = \sum_{k=0}^{n-1} c_k
\varphi_k(x),~ c_k\in \mathbb{R} \right\}.
\]
Then, $\mathcal{E}$ forms an extended Chebyshev space.
\end{lemma}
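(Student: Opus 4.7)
The plan is to reduce the problem to a statement about ordinary polynomials via the substitution $y=g(x)$, exploiting the assumption $g'(x)\neq 0$ on $[a,b]$. Recall that $\mathcal{E}$ is an extended Chebyshev space of dimension $n$ on $[a,b]$ precisely when every nontrivial $v\in\mathcal{E}$ has at most $n-1$ zeros in $[a,b]$ counted with multiplicity. I will verify this criterion directly.

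First I would observe that every element of $\mathcal{E}$ can be written compactly as $v(x)=g'(x)P(g(x))$ with $P(y)=\sum_{k=0}^{n-1}c_k y^k$ a polynomial of degree at most $n-1$. Because $g'$ is nowhere vanishing on $[a,b]$, $g$ is strictly monotone and hence a $C^\infty$-diffeomorphism from $[a,b]$ onto $g([a,b])$. This already gives the nondegeneracy of the representation: if $v\equiv 0$ on $[a,b]$, then $P(g(x))\equiv 0$ on $[a,b]$, and since $g$ is a bijection, $P$ vanishes identically on the interval $g([a,b])$, forcing $P\equiv 0$ and all $c_k=0$. Thus $\{\varphi_0,\ldots,\varphi_{n-1}\}$ is linearly independent and $\dim\mathcal{E}=n$.

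The central step is the multiplicity-preserving correspondence between zeros of $v$ and zeros of $P$. Given $v\not\equiv 0$ and a zero $x_0\in[a,b]$ of $v$, the factor $g'(x_0)\neq 0$ forces $P(y_0)=0$ where $y_0:=g(x_0)$. Writing $P(y)=(y-y_0)^m R(y)$ with $R(y_0)\neq 0$ and $m\geq 1$, I would Taylor-expand $g$ at $x_0$ to get $g(x)-g(x_0)=(x-x_0)h(x)$ with $h(x_0)=g'(x_0)\neq 0$, yielding
\[
v(x)=g'(x)(x-x_0)^m h(x)^m R(g(x)),
\]
where the factor $g'(x)h(x)^m R(g(x))$ is nonzero at $x_0$. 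Hence $x_0$ is a zero of $v$ of multiplicity exactly $m$, matching the multiplicity of $y_0$ as a root of $P$. Summing over all zeros and using the bijectivity of $g$ to ensure no zero of $P$ in $g([a,b])$ is counted twice, the total multiplicity of zeros of $v$ in $[a,b]$ equals the total multiplicity of zeros of $P$ in $g([a,b])$, which is at most $\deg P\leq n-1$.

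I expect the main (minor) obstacle to be the careful bookkeeping of multiplicities, in particular confirming that the factor $h(x)^m R(g(x))$ does not introduce spurious zeros at $x_0$; this reduces to the non-vanishing of $g'(x_0)$ and $R(y_0)$, both already available. Once this is done, the zero-counting property defining an extended Chebyshev space is established and the lemma follows.
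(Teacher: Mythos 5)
Your argument is correct and follows essentially the same route as the paper's proof: substitute $y=g(x)$ to write $v(x)=g'(x)P(g(x))$, deduce linear independence, bound the number of zeros of $v$ counted with multiplicity by $n-1$, and invoke the zero-counting characterization of extended Chebyshev spaces (the paper cites Schumaker, Theorem 2.33, for this last step). The only difference is that you spell out the multiplicity-preserving correspondence between zeros of $v$ and roots of $P$, which the paper leaves implicit.
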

\begin{proof}
With the assumption $g(x)\in C^{\infty}[a,b]$ we see that
$\varphi_k(x)\in C^{\infty}[a,b]$. For any $v(x)\in \mathcal{E}$, we
have, by setting $y=g(x)$, that
\[
v(x) = \sum_{k=0}^{n-1} c_k \varphi_k(x) = g{'}(x) \sum_{k=0}^{n-1}
c_k y^k.
\]
On the one hand, we can see that $v(x)\equiv0$ implies that $c_k=0$,
$k=0,\ldots,n-1$, and therefore
$\{\varphi_0(x),\ldots,\varphi_{n-1}(x)\}$ forms a basis of
$\mathcal{E}$. On the other hand, we can see that $v(x)$ has at most
$n-1$ zeros in $[a,b]$ counting multiplicities. From \cite[Theorem
2.33]{schumaker2007spline}, it follows that $\mathcal{E}$ is an
extended Chebyshev space. This completes the proof.
\end{proof}

We are now ready to construct a new Filon-type method, which is a
modification of the standard method presented in
\cite{iserles2004quadrature,iserles2005efficient}. Let
$a=x_0<x_1<\cdots<x_d=b$ be a set of distinct nodes with
multiplicities $m_0,m_1,\ldots,m_d$ and $\sum_{k=0}^{d} m_k = n$.
From \cite[Theorem 9.9]{schumaker2007spline} we know that there
exists a unique function $p(x) =\sum_{k=0}^{n-1} c_k \varphi_k(x)
\in \mathcal{E}$ such that
\begin{align}\label{eq:HermiteBirkhoff}
p^{(j-1)}(x_k) = f^{(j-1)}(x_k),
\end{align}
for all $j=1,\ldots,m_k$ and $k=0,\ldots,d$. The modified Filon-type
method is defined by
\begin{align}\label{eq:ModifiedFilon}
Q^{F}[f] := \mathcal{H}_{\nu}[p] = \sum_{k=0}^{n-1} c_k
\mu_{k,\nu}^{\omega},
\end{align}
where we have introduced the modified moments
$\mu_{k,\nu}^{\omega}=\mathcal{H}_{\nu}[\varphi_k]$.
\begin{theorem}
Let $m_0, m_{d}\geq m$, then
\begin{align}
\mathcal{H}_{\nu}[f] - Q^{F}[f] =
\mathcal{O}(\omega^{-m-\frac{3}{2}}), \quad \omega\rightarrow\infty.
\end{align}
\end{theorem}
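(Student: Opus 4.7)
The plan is to exploit linearity together with the interpolation conditions to reduce the problem directly to Theorem \ref{thm:ErrorAsymp}. Since $p\in\mathcal{E}$ and $\mathcal{H}_\nu$ is linear, I would first write
\[
\mathcal{H}_\nu[f] - Q^{F}[f] = \mathcal{H}_\nu[f] - \mathcal{H}_\nu[p] = \mathcal{H}_\nu[f-p].
\]
The Hermite interpolation conditions \eqref{eq:HermiteBirkhoff} imply $(f-p)^{(j)}(a)=0$ for $j=0,1,\ldots,m_0-1$ and $(f-p)^{(j)}(b)=0$ for $j=0,1,\ldots,m_d-1$. Under the hypothesis $m_0,m_d\geq m$, both endpoints are zeros of $f-p$ of order at least $m$.

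Next I would show that these vanishing conditions propagate through the operators $\sigma_k[\cdot]$. From the definition \eqref{def:SigmaCase1} and the explicit sample formulas in \eqref{eq:ExpFormSigma}, one verifies by a straightforward induction on $k$ that $\sigma_k[h](x)$ is a linear combination of $h(x),h'(x),\ldots,h^{(k)}(x)$ whose coefficients are $C^\infty$ functions of $g$ and its derivatives, with no singularities on $[a,b]$ because $g(x)\neq 0$ and $g'(x)\neq 0$ there. Applying this with $h=f-p$, it follows that $\sigma_{k}[f-p](a)=0$ for $k=0,1,\ldots,m_0-1$, and similarly $\sigma_{k}[f-p](b)=0$ for $k=0,1,\ldots,m_d-1$. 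In particular, for every $k=1,\ldots,m$ the boundary contribution
\[
\frac{\sigma_{k-1}[f-p](b)}{g'(b)}J_{\nu+k}(\omega g(b)) - \frac{\sigma_{k-1}[f-p](a)}{g'(a)}J_{\nu+k}(\omega g(a))
\]
vanishes identically, which is precisely the statement $Q_m^{A}[f-p] = 0$.

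With this in hand, I would simply invoke Theorem \ref{thm:ErrorAsymp} applied to the function $f-p\in C^\infty[a,b]$ (the hypotheses on $g$ are unchanged):
\[
\mathcal{H}_\nu[f-p] = \mathcal{H}_\nu[f-p] - Q_m^{A}[f-p] = \mathcal{O}(\omega^{-m-\frac{3}{2}}), \qquad \omega\to\infty,
\]
which yields the desired estimate.

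The only point that requires care is the structural claim about $\sigma_k[\cdot]$: namely, that its value at a point $x_0$ depends only on the values of its argument and the first $k$ derivatives of its argument at $x_0$, with smooth coefficients. This is what underpins the vanishing of the leading $m$ asymptotic terms and is the main (mild) obstacle; it is handled by induction using \eqref{def:SigmaCase1}, noting that the differentiation inside the recursion introduces only one additional order of derivative at each step and that the prefactors $g^{\nu+k}$ and $g'$ are nonvanishing on $[a,b]$.
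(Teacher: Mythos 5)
Your proposal is correct and follows essentially the same route as the paper: set $h=f-p$, use that each $\sigma_k[h]$ is a linear combination of $h,h',\ldots,h^{(k)}$ so the interpolation conditions kill the first $m$ boundary terms of the expansion \eqref{eq:AsymCaseOne}, leaving an $\mathcal{O}(\omega^{-m-\frac{3}{2}})$ remainder. Phrasing the final step as $Q_m^{A}[f-p]=0$ plus Theorem \ref{thm:ErrorAsymp} is just a mild repackaging of the paper's direct truncation of the expansion.
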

\begin{proof}
Let $h(x) = f(x) - p(x)$ and thus $\mathcal{H}_{\nu}[f] - Q^{F}[f] =
\mathcal{H}_{\nu}[h]$. Using the expansion \eqref{eq:AsymCaseOne} we
obtain
\[
\mathcal{H}_{\nu}[f] - Q^{F}[f] \sim  - \sum_{k=1}^{\infty}
\frac{1}{(-\omega)^{k}} \left\{ \frac{\sigma_{k-1}[h](b)}{g{'}(b)}
J_{\nu+k}(\omega g(b)) - \frac{\sigma_{k-1}[h](a)}{g{'}(a)}
J_{\nu+k}(\omega g(a)) \right\}.
\]
Note that each $\sigma_{k}[h](x)$ is a linear combination of
$h(x),h{'}(x),\ldots,h^{(k)}(x)$. This together with the condition
$m_0,m_d \geq m$ implies
\[
\sigma_{k}[h](a) = \sigma_{k}[h](b) = 0, \quad k=0,\ldots,m-1.
\]
Hence the desired result follows.
\end{proof}

We reconsider the previous example using three different modified
Filon-type methods: $Q^{F}[f]$ with nodes $\{1,2\}$ and
multiplicities all one, $Q^{F}[f]$ with nodes
$\{1,\frac{4}{3},\frac{5}{3},2\}$ and multiplicities all one and
$Q^{F}[f]$ with nodes $\{1,\frac{4}{3},\frac{5}{3},2\}$ and
multiplicities all two. The modified moments
$\{\mu_{k,\nu}^{\omega}\}_{k=0}^{n-1}$ are evaluated by using
Theorem \ref{thm:ModMomRecRel} in the appendix. Numerical results
are shown in Figure \ref{fig:Filon}. As can be seen, the accuracy of
the proposed method can be improved greatly by adding more
derivatives interpolation at both endpoints or by adding more
interior nodes. Moreover, a desired accuracy level can be achieved
only with a small number of nodes and multiplicities.

\begin{figure}[ht]
\centering
\includegraphics[width=7.2cm,height=6cm]{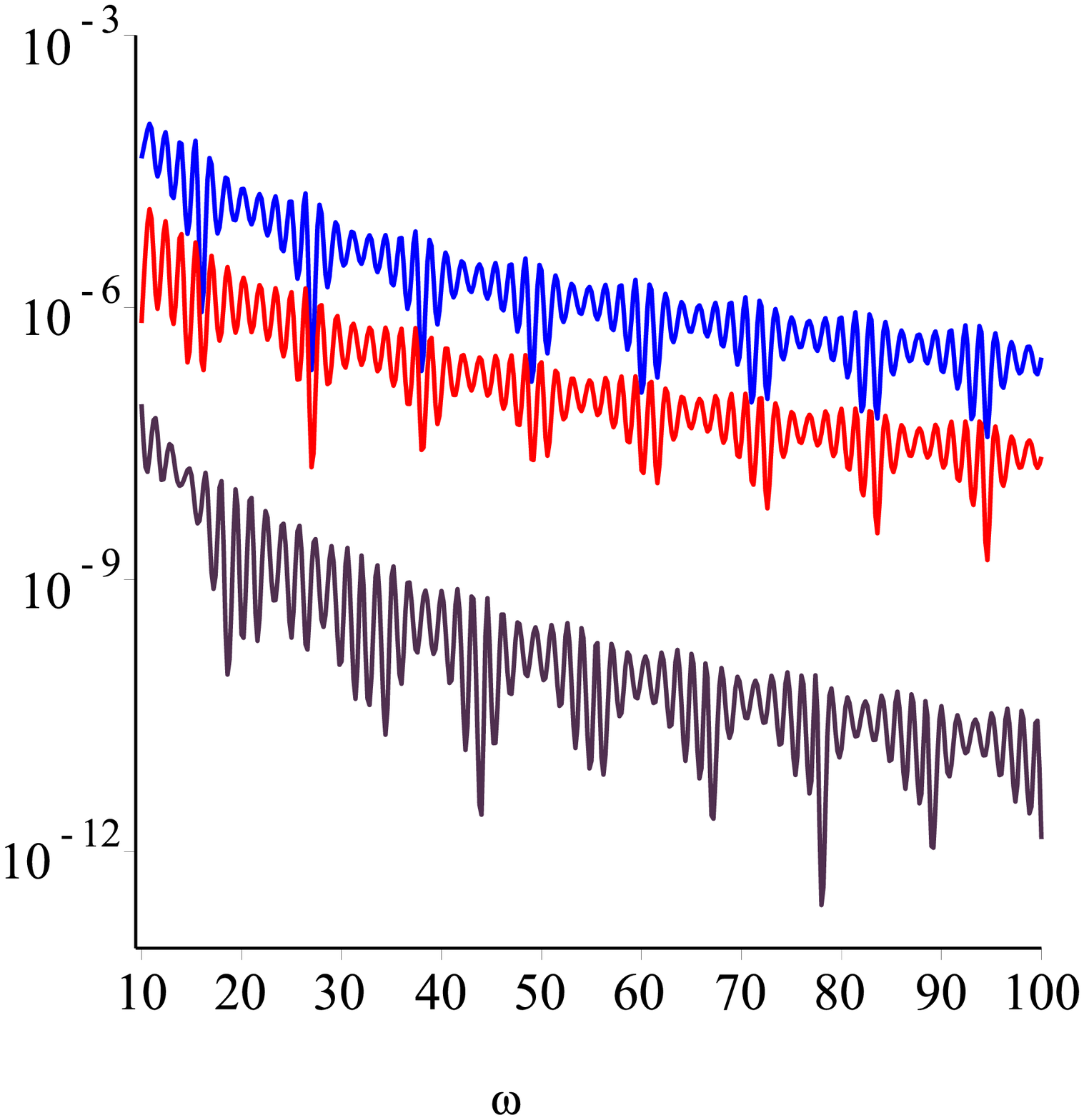}~~
\includegraphics[width=7.2cm,height=6cm]{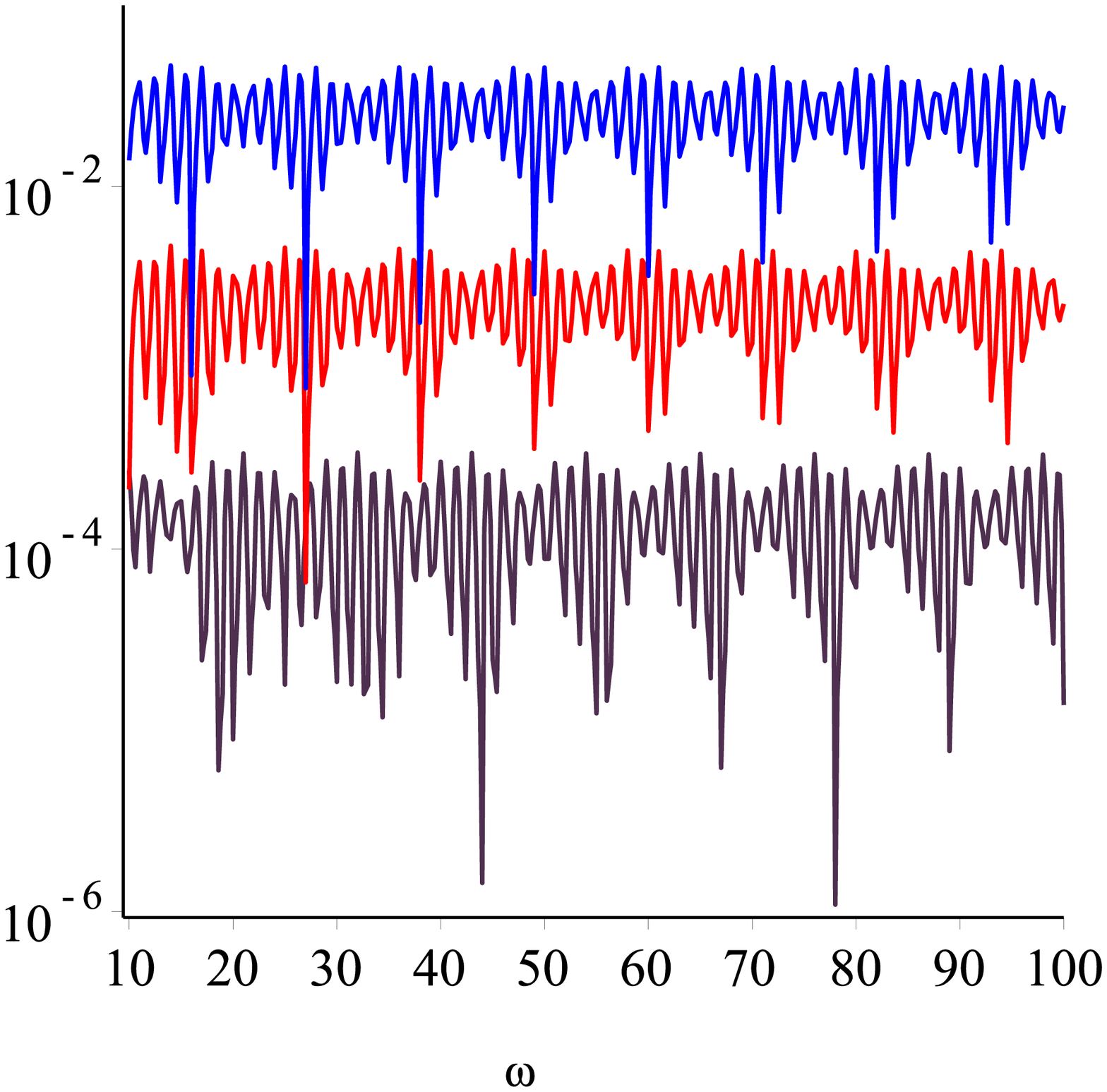}
\caption{The left panel shows the error of $Q^{F}[f]$ with nodes
$\{1,2\}$ and multiplicities all one (blue), $Q^{F}[f]$ with nodes
$\{1,\frac{4}{3},\frac{5}{3},2\}$ and multiplicities all one (red)
and $Q^{F}[f]$ with nodes $\{1,\frac{4}{3},\frac{5}{3},2\}$ and
multiplicities all two (violet). The right panel shows the error of
the first two methods scaled by $\omega^{\frac{5}{2}}$ and the error
of the last method scaled by $\omega^{\frac{7}{2}}$. }
\label{fig:Filon}
\end{figure}

\subsection{The case of zeros}\label{sec:zero}
In this subsection we consider the case that $g(x)$ has zeros in the
integration interval $[a,b]$ and we shall assume that $\Re(\nu)>-1$
to ensure the existence of the transform. Moreover, we restrict
ourselves to the case where the function $g(x)$ has a single zero on
the interval $[a,b]$, e.g., $g(\xi)=0$ where $\xi\in[a,b]$ and
$g(x)\neq0$ for $x\in[a,b]\backslash\{\xi\}$. If $g(x)$ has a finite
number of zeros on the interval $[a,b]$, we can divide the whole
interval into subintervals such that on each subinterval the
function $g(x)$ contains only a single zero.

Before proceeding, let us explain why the expansion in Theorem
\ref{thm:AsympCaseOne} fails when $g(x)$ has a zero in $[a,b]$.
Suppose that $\sigma_{k}[f](x) \in C^{\infty}[a,b]$ for some
$k\geq0$, it follows from \eqref{def:SigmaCase1} that
\begin{align}\label{eq:progression}
\sigma_{k+1}[f](x) &= g(x)^{\nu+k+1}
\frac{d}{dx}\left[\frac{\sigma_{k}[f](x)}{g(x)^{\nu+k+1} g{'}(x)}
\right] \nonumber \\
&= \frac{\sigma_{k}[f]{'}(x)}{g{'}(x)} - \frac{\sigma_{k}[f](x)
g{''}(x)}{g{'}(x)^2} - (\nu+k+1) \frac{\sigma_{k}[f](x)}{g(x)}
\nonumber
\\
&= \frac{d}{dx}\left[ \frac{\sigma_{k}[f](x)}{g{'}(x)} \right] -
(\nu+k+1) \frac{\sigma_{k}[f](x)}{g(x)}.
\end{align}
We can see clearly that a simple pole at $x=\xi$ is introduced in
the term $\sigma_{k}[f](x)/g(x)$ and thus integration by parts in
Theorem \ref{thm:AsympCaseOne} is no longer valid. To remedy this,
we may subtract the value $\sigma_{k}[f](\xi)$ from
$\sigma_{k}[f](x)$ before performing integration by parts and thus
the resulting $\sigma_{k+1}[f](x)$ has a removable singularity at
$x=\xi$.

\begin{theorem}\label{thm:AsympZero}
Let $f(x),g(x)\in C^{\infty}[a,b]$ and $g{'}(x)\neq0$ for
$x\in[a,b]$. Furthermore, assume that $g(\xi)=0$ for some
$\xi\in[a,b]$ and $g(x)\neq0$ for $x\in[a,b]\setminus\{\xi\}$. Then,
for $\omega\rightarrow\infty$, 
\begin{align}\label{eq:FullAsympZero}
\mathcal{H}_{\nu}[f] &\sim \sum_{k=0}^{\infty}
\frac{\widetilde{\sigma}_k[f](\xi)}{(-\omega)^k} M(\nu+k,\omega) -
\sum_{k=1}^{\infty} \frac{1}{(-\omega)^k} \left\{
\frac{\widetilde{\sigma}_{k-1}[f](b) -
\widetilde{\sigma}_{k-1}[f](\xi)}{g{'}(b)}
J_{\nu+k}(\omega g(b)) \right. \nonumber \\
&~~~~~ \left.  - \frac{\widetilde{\sigma}_{k-1}[f](a) -
\widetilde{\sigma}_{k-1}[f](\xi)}{g{'}(a)} J_{\nu+k}(\omega g(a))
\right\},
\end{align}
where $\widetilde{\sigma}_k[f](x)$ are defined by
\begin{align}\label{eq:SigmaZero}
\widetilde{\sigma}_0[f](x) = f(x), \quad \widetilde{\sigma}_k[f](x)
= g(x)^{\nu+k} \frac{d}{dx}\left[ \frac{
\widetilde{\sigma}_{k-1}[f](x) - \widetilde{\sigma}_{k-1}[f](\xi) }{
g(x)^{\nu+k} g{'}(x) } \right], \quad k\geq 1.
\end{align}
\end{theorem}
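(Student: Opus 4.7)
My plan is to mirror the induction used in the proof of Theorem~\ref{thm:AsympCaseOne}, making the key modification that, before dividing by $g(x)$ at each stage, I subtract the value at $x=\xi$ to cure the singularity that would otherwise appear. Specifically, I would prove by induction on $m\ge 0$ the finite identity
\begin{align*}
\mathcal{H}_\nu[f] &= \sum_{k=0}^{m}\frac{\widetilde\sigma_k[f](\xi)}{(-\omega)^k}M(\nu+k,\omega) \\
&\quad -\sum_{k=1}^{m}\frac{1}{(-\omega)^k}\bigg\{\frac{\widetilde\sigma_{k-1}[f](b)-\widetilde\sigma_{k-1}[f](\xi)}{g{'}(b)}J_{\nu+k}(\omega g(b)) \\
&\qquad\qquad\qquad -\frac{\widetilde\sigma_{k-1}[f](a)-\widetilde\sigma_{k-1}[f](\xi)}{g{'}(a)}J_{\nu+k}(\omega g(a))\bigg\} \\
&\quad +\frac{1}{(-\omega)^m}\mathcal{H}_{\nu+m}[\widetilde\sigma_m[f]-\widetilde\sigma_m[f](\xi)].
\end{align*}
The base case $m=0$ is the trivial decomposition $\mathcal{H}_\nu[f]=f(\xi)M(\nu,\omega)+\mathcal{H}_\nu[f-f(\xi)]$. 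For the inductive step I would apply the identity~\eqref{eq:DerivOscillator} with $\nu$ replaced by $\nu+m$ inside the remainder $\mathcal{H}_{\nu+m}[\widetilde\sigma_m[f]-\widetilde\sigma_m[f](\xi)]$, integrate by parts to extract the boundary contributions at $a$ and $b$ at level $k=m+1$, and then split the emerging integrand as $\widetilde\sigma_{m+1}[f]=\widetilde\sigma_{m+1}[f](\xi)+(\widetilde\sigma_{m+1}[f]-\widetilde\sigma_{m+1}[f](\xi))$, producing simultaneously the new $\widetilde\sigma_{m+1}[f](\xi)M(\nu+m+1,\omega)$ term in the first sum and the new remainder in the desired form.

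The central difficulty is to verify that the recursion~\eqref{eq:SigmaZero} keeps $\widetilde\sigma_k[f]$ in $C^\infty[a,b]$, so that the integration by parts above is legitimate. I would argue this inductively: carrying out exactly the same algebra as in~\eqref{eq:progression} rewrites the recursion as
\[
\widetilde\sigma_{k+1}[f](x)=\frac{d}{dx}\!\left[\frac{\widetilde\sigma_k[f](x)-\widetilde\sigma_k[f](\xi)}{g{'}(x)}\right]-(\nu+k+1)\,\frac{\widetilde\sigma_k[f](x)-\widetilde\sigma_k[f](\xi)}{g(x)}.
\]
Only the final quotient is suspect at $x=\xi$, but because $g{'}(\xi)\ne0$ the function $g$ vanishes to exactly first order at $\xi$ while $\widetilde\sigma_k[f](x)-\widetilde\sigma_k[f](\xi)$ vanishes to at least first order, so the apparent singularity is removable by L'H\^opital. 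This same observation ensures that the product $\frac{\widetilde\sigma_m[f](x)-\widetilde\sigma_m[f](\xi)}{g{'}(x)}J_{\nu+m+1}(\omega g(x))$ that emerges as the boundary term is well defined, even in the edge case $\xi\in\{a,b\}$ where the subtracted constant makes the boundary contribution at $\xi$ vanish outright.

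To finish, I would let $m\to\infty$ in the finite identity. For the remainder I would use the uniform bound $|J_{\nu+m}(z)|\le C_m(1+|z|)^{-1/2}$, valid for $\Re(\nu)>-1$: splitting the integration interval at $\xi$, the portion near $\xi$ contributes $\mathcal{O}(\omega^{-1/2})$ by direct estimation against $(1+\omega|g(x)|)^{-1/2}$ (using $g(x)\sim g{'}(\xi)(x-\xi)$), and the portion away from $\xi$ also yields $\mathcal{O}(\omega^{-1/2})$ since $g$ is there bounded away from $0$. Hence $\omega^{-m}\mathcal{H}_{\nu+m}[\widetilde\sigma_m[f]-\widetilde\sigma_m[f](\xi)]=\mathcal{O}(\omega^{-m-1/2})$, which is asymptotically negligible compared with every earlier term in the finite identity and so yields the full expansion~\eqref{eq:FullAsympZero}.
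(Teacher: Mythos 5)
Your proposal is correct and takes essentially the same route as the paper: subtract $\widetilde{\sigma}_m[f](\xi)$ to isolate the moment $M(\nu+m,\omega)$, integrate by parts via \eqref{eq:DerivOscillator}, and iterate, your finite identity being the paper's with the $k=m$ moment term merely absorbed into the displayed sum. The smoothness check and the $\mathcal{O}(\omega^{-m-1/2})$ remainder bound are extra rigor the paper omits (it simply iterates to $m\to\infty$); just note the remainder is comparable to, not smaller than, the $k=m$ boundary terms, so strictly one passes to the identity at level $m+1$ to certify each truncation.
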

\begin{proof}
For each $m\geq0$, we first rewrite
$\mathcal{H}_{\nu+m}[\widetilde{\sigma}_m[f]]$ as
\begin{align}
\mathcal{H}_{\nu+m}[\widetilde{\sigma}_m[f]] &= \int_{a}^{b}
\widetilde{\sigma}_m[f](x)
J_{\nu+m}(\omega g(x)) dx \nonumber \\
&= \widetilde{\sigma}_m[f](\xi) \int_{a}^{b} J_{\nu+m}(\omega g(x))
dx + \int_{a}^{b} ( \widetilde{\sigma}_m[f](x) -
\widetilde{\sigma}_m[f](\xi) )
J_{\nu+m}(\omega g(x)) dx \nonumber \\
&= \widetilde{\sigma}_m[f](\xi) M(\nu+m,\omega) + \int_{a}^{b} (
\widetilde{\sigma}_m[f](x) - \widetilde{\sigma}_m[f](\xi) )
J_{\nu+m}(\omega g(x)) dx. \nonumber
\end{align}
Making use of integration by part to the last term, we find
\begin{align}
\mathcal{H}_{\nu+m}[\widetilde{\sigma}_m[f]]
&= \widetilde{\sigma}_{m}[f](\xi) M(\nu+m,\omega)  \nonumber \\
&~~~~~ + \frac{1}{\omega} \int_{a}^{b}
\frac{\widetilde{\sigma}_{m}[f](x) -
\widetilde{\sigma}_{m}[f](\xi)}{g{'}(x) g(x)^{\nu+m+1}}
\frac{d}{dx}\left[
g(x)^{\nu+m+1} J_{\nu+m+1}(\omega g(x)) \right] dx \nonumber \\
&= \widetilde{\sigma}_{m}[f](\xi) M(\nu+m,\omega) \nonumber \\
&~~~~~ + \frac{1}{\omega} \left\{ \frac{\widetilde{\sigma}_{m}[f](b)
- \widetilde{\sigma}_{m}[f](\xi)}{g{'}(b)} J_{\nu+m+1}(\omega g(b))
\right.
\nonumber \\
&~~~~~ \left. - \frac{\widetilde{\sigma}_{m}[f](a) -
\widetilde{\sigma}_{m}[f](\xi)}{g{'}(a)} J_{\nu+m+1}(\omega g(a))
\right\} - \frac{1}{\omega}
\mathcal{H}_{\nu+m+1}[\widetilde{\sigma}_{m+1}[f]]. \nonumber
\end{align}
Finally, the expansion \eqref{eq:FullAsympZero} can be obtained by
iterating the above process from $m=0$ to $m\rightarrow\infty$.

\end{proof}

The following corollary is an immediate consequence of Theorem
\ref{thm:AsympZero}, which has been proved in \cite{xiang2010fast}.
\begin{corollary}\label{cor:AsyZeroSim}
In the case $g(x)=x$ and $0=a<b$, then
\begin{align}
\mathcal{H}_{\nu}[f] &\sim \sum_{k=0}^{\infty}
\frac{\widetilde{\sigma}_{k}[f](0)}{(-\omega)^k} M(\nu+k,\omega) -
\sum_{k=1}^{\infty} \frac{\widetilde{\sigma}_{k-1}[f](b) -
\widetilde{\sigma}_{k-1}[f](0)}{(-\omega)^{k}} J_{\nu+k}(\omega b).
\nonumber
\end{align}
where $\widetilde{\sigma}_{k}[f](x)$ are defined as in
\eqref{eq:SigmaZero}.
\end{corollary}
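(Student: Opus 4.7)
The plan is to derive the corollary as a direct specialization of Theorem \ref{thm:AsympZero} to the oscillator $g(x) = x$ on $[0,b]$. First I would record the three simplifications that occur under this choice: $g'(x) \equiv 1$, $g(b) = b$, and, most importantly, the unique zero of $g$ in $[0,b]$ is $\xi = 0$, which coincides with the left endpoint $a$.

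Next I would substitute these values into the full expansion \eqref{eq:FullAsympZero}. The leading $M(\nu+k,\omega)$-sum carries over verbatim with $\widetilde{\sigma}_k[f](\xi)$ replaced by $\widetilde{\sigma}_k[f](0)$. For the boundary terms, the $x = b$ contribution becomes
\begin{equation*}
-\sum_{k=1}^{\infty} \frac{\widetilde{\sigma}_{k-1}[f](b) - \widetilde{\sigma}_{k-1}[f](0)}{(-\omega)^k}\, J_{\nu+k}(\omega b),
\end{equation*}
since $g'(b) = 1$, while the $x = a$ contribution drops out identically: its coefficient is $\widetilde{\sigma}_{k-1}[f](a) - \widetilde{\sigma}_{k-1}[f](\xi) = \widetilde{\sigma}_{k-1}[f](0) - \widetilde{\sigma}_{k-1}[f](0) = 0$ because $a = \xi = 0$. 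Combining these terms yields exactly the expansion claimed in the corollary, with $\widetilde{\sigma}_k[f](x)$ inherited from \eqref{eq:SigmaZero} specialized to $g(x) = x$.

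There is essentially no obstacle here: the corollary is a clean transcription of Theorem \ref{thm:AsympZero} once one observes that $\xi$ coincides with the left endpoint. The only minor point worth checking is that $\widetilde{\sigma}_k[f]$ remains smooth at $x = 0$ even though $\xi$ sits on the boundary, but the subtraction of $\widetilde{\sigma}_{k-1}[f](\xi)$ in \eqref{eq:SigmaZero} is precisely designed to remove the pole introduced by dividing by $g(x)^{\nu+k}$, so the regularity argument used inside the proof of Theorem \ref{thm:AsympZero} applies verbatim at the endpoint under the standing assumption $\Re(\nu) > -1$.
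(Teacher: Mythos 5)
Your proposal is correct and matches the paper's own (one-line) proof: both simply specialize Theorem \ref{thm:AsympZero} with $\xi=0$, $g'(x)\equiv 1$, and observe that the $x=a$ boundary term vanishes since $a=\xi$. Your extra remark about regularity of $\widetilde{\sigma}_k[f]$ at the endpoint is a harmless elaboration of the same argument.
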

\begin{proof}
Note that $\xi=0$, the desired result follows from
\eqref{eq:FullAsympZero}.
\end{proof}

We now turn to investigate the dependence of
$\widetilde{\sigma}_k[f](x)$ on $f(x)$ and its derivatives. When
$x\neq \xi$, it is easy to verify that each
$\widetilde{\sigma}_{k}[f](x)$ is a linear combination of
$f(x),f{'}(x),\ldots,f^{(k)}(x)$. When $x=\xi$, applying
L{'}H\^{o}spital's rule, we obtain
\begin{align}\label{eq:SigXi}
\widetilde{\sigma}_{1}[f](\xi) &= - \frac{\nu}{g{'}(\xi)} f{'}(\xi), \nonumber \\
\widetilde{\sigma}_{2}[f](\xi) &= - \frac{\nu+1}{( g{'}(\xi) )^3}
\left[ \frac{\nu-3}{2} g{''}(\xi) f{'}(\xi) -
\frac{\nu-1}{2} g{'}(\xi) f{''}(\xi) \right], \\
\widetilde{\sigma}_{3}[f](\xi) &= - \frac{\nu+2}{(g{'}(\xi))^5}
\left[ \left( \frac{8\nu - \nu^2}{6} g{'}(\xi) g{'''}(\xi) +
\frac{\nu^2-8\nu+3}{2}
(g{''}(\xi))^2 \right) f{'}(\xi) \right. \nonumber  \\
&~~~~~~~~~  - \frac{\nu^2-4\nu+1}{2} g{'}(\xi) g{'''}(\xi)
f{''}(\xi) \left. + \frac{\nu^2-2\nu}{6} (g{'}(\xi))^2 f{'''}(\xi)
\right], \nonumber
\end{align}
and it is not difficult to verify that each
$\widetilde{\sigma}_{k}[f](\xi)$ is a linear combination of
$f{'}(\xi),f{''}(\xi),\ldots,$ $f^{(k)}(\xi)$. In the particular
case where $g(x) = x$, $\xi=0$ and assume that $f(x)$ is analytic
inside a neighborhood of the origin, the values
$\widetilde{\sigma}_{k}[f](0)$ are a scalar multiple of $f^{(k)}(0)$
(see \cite[Theorem C.3]{wang2018hankel} or
\cite{wang2011asymptotic})
\begin{align}\label{eq:SigmaValue}
\widetilde{\sigma}_{k}[f](0) = \frac{ f^{(k)}(0) }{k!}
\prod_{\ell=0}^{k-1} \left(1 + 2\ell - \nu - k \right), \quad
k\geq1.
\end{align}

Truncating \eqref{eq:FullAsympZero} after the first $m$ terms,
yields an asymptotic method
\begin{align}\label{def:AsymMethodZero}
Q_m^{A}[f] &= \sum_{k=0}^{m-1}
\frac{\widetilde{\sigma}_k[f](\xi)}{(-\omega)^k} M(\nu+k,\omega) -
\sum_{k=1}^{m} \frac{1}{(-\omega)^k} \left\{
\frac{\widetilde{\sigma}_{k-1}[f](b) -
\widetilde{\sigma}_{k-1}[f](\xi)}{g{'}(b)}
J_{\nu+k}(\omega g(b)) \right. \nonumber \\
&~~~~~~~~~~~~~~~ \left.  - \frac{\widetilde{\sigma}_{k-1}[f](a) -
\widetilde{\sigma}_{k-1}[f](\xi)}{g{'}(a)} J_{\nu+k}(\omega g(a))
\right\}.
\end{align}
Moreover, from the above discussion that the method
\eqref{def:AsymMethodZero} is a linear combination of the values
$f(a),f{'}(a),\ldots,f^{(m-1)}(a)$, $f(\xi),f{'}(\xi),\ldots,$
$f^{(m-1)}(\xi)$ and $f(b),f{'}(b),\ldots,f^{(m-1)}(b)$.

Now, we are in a position to prove the error estimate for the
asymptotic method \eqref{def:AsymMethodZero}. Before stating the
result, it will be helpful to show the estimate of $M(\nu,\omega)$.
\begin{lemma}\label{lem:MuEstim}
Let $g(x)\in C^{\infty}[a,b]$ and $g{'}(x)\neq0$ for $x\in[a,b]$.
Furthermore, assume that $g(\xi)=0$ for some $\xi\in[a,b]$ and
$g(x)\neq0$ for $x\in[a,b]\setminus\{\xi\}$. Then, for
$\omega\rightarrow\infty$,
\begin{align}
M(\nu,\omega) = \mathcal{O}(\omega^{-1}).
\end{align}
\end{lemma}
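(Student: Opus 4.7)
The plan is to reduce $M(\nu,\omega)$ to a one-dimensional oscillatory integral in the variable $y=g(x)$, and then perform a single integration by parts using an antiderivative of $J_\nu(\omega y)$ anchored at $y=0$; anchoring at $0$ is what will make the $\omega^{-1}$ decay transparent. Since $g'(x)\neq 0$ on $[a,b]$, the substitution $y=g(x)$ is a $C^\infty$ diffeomorphism of $[a,b]$ onto the closed interval with endpoints $g(a)$ and $g(b)$, which I denote $[\alpha,\beta]$ (up to swapping the endpoints, which is harmless), and $0=g(\xi)\in[\alpha,\beta]$. Setting $h(y):=1/g'(g^{-1}(y))\in C^\infty[\alpha,\beta]$, the change of variables yields
\[
M(\nu,\omega)=\int_{\alpha}^{\beta} J_\nu(\omega y)\,h(y)\,dy.
\]

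Next I would introduce
\[
\Phi(y):=\int_{0}^{y} J_\nu(\omega t)\,dt=\frac{1}{\omega}\int_{0}^{\omega y} J_\nu(s)\,ds
\]
and establish the uniform estimate $\sup_{y\in[\alpha,\beta]}|\Phi(y)|=\mathcal{O}(\omega^{-1})$ as $\omega\to\infty$. By the rescaling this reduces to showing that the incomplete Bessel integral $N\mapsto\int_{0}^{N} J_\nu(s)\,ds$ is bounded uniformly in $N$: integrability at $s=0$ follows from $J_\nu(s)=\mathcal{O}(s^{\Re(\nu)})$ together with the hypothesis $\Re(\nu)>-1$, while boundedness of the tail follows from the classical large-argument asymptotic $J_\nu(s)=\sqrt{2/(\pi s)}\cos(s-\nu\pi/2-\pi/4)+\mathcal{O}(s^{-3/2})$ and one integration by parts in $s$ to tame the $s^{-1/2}$-oscillatory part. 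With this bound in hand, a single integration by parts in $y$ yields
\[
M(\nu,\omega)=\bigl[\Phi(y)h(y)\bigr]_{\alpha}^{\beta}-\int_{\alpha}^{\beta}\Phi(y)\,h'(y)\,dy,
\]
and both terms are $\mathcal{O}(\omega^{-1})$ since $h$ and $h'$ are continuous on the compact interval $[\alpha,\beta]$.

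The main obstacle is the uniform bound on $\Phi$, and specifically the choice of the integration constant. Taking the antiderivative to vanish at $y=0$ (rather than at either endpoint $\alpha$ or $\beta$) is essential: it exposes the natural rescaling $\Phi(y)=\omega^{-1}\widetilde\Phi(\omega y)$ with $\widetilde\Phi(z)=\int_0^z J_\nu(s)\,ds$ bounded on $\R$, and it is precisely this scaling that produces the $\omega^{-1}$ decay. Any other choice of integration constant would generically give $\Phi=\mathcal{O}(1)$ at $y=0$, and the ensuing integration by parts would then deliver only an $\mathcal{O}(1)$ estimate rather than the desired $\mathcal{O}(\omega^{-1})$.
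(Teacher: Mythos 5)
Your argument is correct, but it takes a genuinely different route from the paper. The paper splits $M(\nu,\omega)$ at $\xi$, changes variables $t=\mp g(x)$ on each piece (using $J_{\nu}(-z)=e^{\nu\pi i}J_{\nu}(z)$), then invokes its own expansion for the model oscillator (Corollary \ref{cor:AsyZeroSim}) and estimates the resulting leading moments $\int_0^{c}J_{\nu}(\omega t)\,dt$ through the closed-form Lommel-function formula of Lemma \ref{lem:EstimMom}; the boundary terms are $\mathcal{O}(\omega^{-3/2})$ and the leading terms give $\mathcal{O}(\omega^{-1})$. You instead perform a single integration by parts against the antiderivative $\Phi(y)=\int_0^y J_{\nu}(\omega t)\,dt$ and use the classical uniform boundedness of $N\mapsto\int_0^N J_{\nu}(s)\,ds$ for $\Re(\nu)>-1$ (the standing assumption of this subsection, which the paper's proof also uses). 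Your route is more elementary and self-contained: it bypasses the expansion machinery and the Lommel functions entirely, and it visibly needs much less smoothness of $g$ (continuity of $h'$ suffices), whereas the paper's route recycles results it has already established and, as a by-product, exhibits the explicit leading-order contribution rather than just a bound. Two small points: for $y<0$ your rescaling of $\Phi$ should be routed through the same reflection identity $J_{\nu}(-z)=e^{\nu\pi i}J_{\nu}(z)$ that the paper uses, so that the uniform bound on the incomplete Bessel integral applies on both sides of the zero (this costs only the fixed constant $|e^{\nu\pi i}|$); and your closing remark is overstated, since anchoring the antiderivative at either endpoint of $[\alpha,\beta]$ also gives a uniformly $\mathcal{O}(\omega^{-1})$ primitive once $0\in[\alpha,\beta]$ --- the essential fact is the uniform boundedness of the rescaled incomplete integral, not the particular anchor. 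Neither point affects the validity of your proof.
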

\begin{proof}
Since $g{'}(x)\neq0$ for $x\in[a,b]$ this implies that $g(x)$ is a
strictly monotonic function on the interval $[a,b]$. Without loss of
generality, we may assume that $g(x)$ is a monotonically increasing
function, i.e., $g(a)\leq 0 \leq g(b)$.

Split the integral representation of $M(\nu,\omega)$ into integrals
over $[a,\xi]$ and $[\xi,b]$, we have
\begin{align}
M(\nu,\omega) &= \int_{a}^{\xi} J_{\nu}(\omega g(x))dx +
\int_{\xi}^{b} J_{\nu}(\omega g(x))dx. \nonumber \\
&= \int_{0}^{-g(a)} \frac{1}{g{'}(g^{-1}(-t))} J_{\nu}(-\omega t) dt
+ \int_{0}^{g(b)} \frac{1}{g{'}(g^{-1}(t))} J_{\nu}(\omega t)dt,
\nonumber
\end{align}
where we have made the change of variable $t=-g(x)$ in the first
integral and $t=g(x)$ in the second. By making use of the identity
$J_{\nu}(-z) = e^{\nu\pi i}J_{\nu}(z)$ \cite[Eqn.
10.11.1]{olver2010nist}, we can rewrite $M(\nu,\omega)$ as
\begin{align}\label{eq:MuEstim}
M(\nu,\omega) &= e^{\nu\pi i} \int_{0}^{-g(a)}
\frac{1}{g{'}(g^{-1}(-t))} J_{\nu}(\omega t) dt + \int_{0}^{g(b)}
\frac{1}{g{'}(g^{-1}(t))} J_{\nu}(\omega t)dt.
\end{align}
Using Corollary \ref{cor:AsyZeroSim} and keeping only the leading
terms, we have
\begin{align}\label{eq:EstimMom}
M(\nu,\omega) &\sim \frac{1}{g{'}(\xi)} \left[ e^{\nu\pi i}
\int_{0}^{-g(a)} J_{\nu}(\omega t) dt
+ \int_{0}^{g(b)} J_{\nu}(\omega t) dt \right] \nonumber \\
&~~~~~ + \frac{e^{\nu\pi i}}{\omega} \left[ \frac{1}{g{'}(a)} -
\frac{1}{g{'}(\xi)} \right] J_{\nu+1}(-\omega g(a)) +
\frac{1}{\omega} \left[ \frac{1}{g{'}(b)} - \frac{1}{g{'}(\xi)}
\right] J_{\nu+1}(\omega g(b)). \nonumber
\end{align}
To estimate $M(\nu,\omega)$, it suffices to derive estimates of
those terms on the right-hand side. For the first term, from Lemma
\ref{lem:EstimMom} in the appendix we see that both integrals inside
the bracket behave like $\mathcal{O}(\omega^{-1})$ for fixed $\nu$
and $\Re(\nu)>-1$. For the second and third terms, it is easy to see
that both terms behave like $\mathcal{O}(\omega^{-\frac{3}{2}})$.
Hence, the desired result follows.
\end{proof}

Owing to Lemma \ref{lem:MuEstim}, we have the following estimate.
\begin{theorem}\label{thm:AsymErrorZero}
Under the same assumptions as in Theorem \ref{thm:AsympZero}, it
follows that, for $\omega\rightarrow\infty$,
\begin{align}
\mathcal{H}_{\nu}[f] - Q_m^{A}[f] = \left\{\begin{array}{cc}
                                           {\displaystyle \mathcal{O}(\omega^{-m-1}) },  & \mbox{if $\widetilde{\sigma}_{m}[f](\xi)\neq0 $}, \\ [10pt]
                                           {\displaystyle \mathcal{O}(\omega^{-m-\frac{3}{2}}) },  & \mbox{if $\widetilde{\sigma}_{m}[f](\xi)=0
                                           $},
                                        \end{array}
                                        \right.
\end{align}
where $Q_m^{A}[f]$ is defined as in \eqref{def:AsymMethodZero}.
\end{theorem}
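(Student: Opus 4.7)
The plan is to start from the asymptotic expansion \eqref{eq:FullAsympZero} established in Theorem \ref{thm:AsympZero} and simply identify the size of the first omitted terms in each of the two sums. By definition of $Q_m^{A}[f]$ in \eqref{def:AsymMethodZero}, the first sum is truncated after $k=m-1$ and the second sum after $k=m$, so
\begin{align*}
\mathcal{H}_{\nu}[f] - Q_m^{A}[f] &\sim \sum_{k=m}^{\infty} \frac{\widetilde{\sigma}_{k}[f](\xi)}{(-\omega)^k} M(\nu+k,\omega)  \\
&~~~~ - \sum_{k=m+1}^{\infty} \frac{1}{(-\omega)^k}\left\{\frac{\widetilde{\sigma}_{k-1}[f](b)-\widetilde{\sigma}_{k-1}[f](\xi)}{g{'}(b)} J_{\nu+k}(\omega g(b)) - (a\text{-term})\right\}.
\end{align*}
It therefore suffices to estimate the leading term of each tail and argue that higher order terms are absorbed.

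For the second sum, because $g(a),g(b)\neq 0$, the classical estimate $J_{\nu+k}(z)=\mathcal{O}(z^{-1/2})$ as $z\to\infty$ \cite[Eqn.~10.17.2]{olver2010nist} applies and the leading $k=m+1$ term is $\mathcal{O}(\omega^{-(m+1)-1/2}) = \mathcal{O}(\omega^{-m-3/2})$; all subsequent terms are smaller by integer powers of $\omega^{-1}$. For the first sum, I would invoke Lemma \ref{lem:MuEstim} to get $M(\nu+k,\omega)=\mathcal{O}(\omega^{-1})$ uniformly for the relevant shifted orders, so the leading $k=m$ term has size $\omega^{-m}\cdot\widetilde{\sigma}_m[f](\xi) \cdot \mathcal{O}(\omega^{-1}) = \mathcal{O}(\omega^{-m-1})$, provided $\widetilde{\sigma}_m[f](\xi)\neq0$.

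Combining the two contributions yields the two cases of the theorem. If $\widetilde{\sigma}_m[f](\xi)\neq 0$, the $\omega^{-m-1}$ contribution from the first sum dominates the $\omega^{-m-3/2}$ contribution from the second, giving the stated $\mathcal{O}(\omega^{-m-1})$ bound. If $\widetilde{\sigma}_m[f](\xi)=0$, the leading term of the first sum vanishes and the next nonzero term is of order $\omega^{-(m+1)-1}=\omega^{-m-2}$, which is smaller than $\omega^{-m-3/2}$, so the second sum dictates the error and we obtain $\mathcal{O}(\omega^{-m-3/2})$.

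The only mildly delicate point is that the expansion \eqref{eq:FullAsympZero} is asymptotic rather than convergent, so one cannot literally bound the tails term by term. To make the argument rigorous I would, as in the proof of Theorem \ref{thm:AsympZero}, work with the finite identity obtained after $m+1$ integration-by-parts steps, whose remainder is exactly $(-\omega)^{-(m+1)}\mathcal{H}_{\nu+m+1}[\widetilde{\sigma}_{m+1}[f]]$ plus the isolated $\widetilde{\sigma}_m[f](\xi)M(\nu+m,\omega)/(-\omega)^m$ term; Lemma \ref{lem:MuEstim} applied to the former (noting $\widetilde{\sigma}_{m+1}[f]\in C^{\infty}[a,b]$ by the discussion following Theorem \ref{thm:AsympZero}) gives the required bound without any appeal to the full asymptotic series, which I expect to be the main technical obstacle in turning the heuristic term-counting above into a clean proof.
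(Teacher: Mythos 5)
Your proposal is correct and follows essentially the same route as the paper: truncate the expansion \eqref{eq:FullAsympZero} at the method \eqref{def:AsymMethodZero}, bound the moment tail via Lemma \ref{lem:MuEstim} (giving $\mathcal{O}(\omega^{-m-1})$ or $\mathcal{O}(\omega^{-m-2})$ according to whether $\widetilde{\sigma}_m[f](\xi)$ vanishes) and the boundary tail via $J_{\nu+k}(z)=\mathcal{O}(z^{-1/2})$, then compare the two. Your closing remark about working with the exact finite integration-by-parts identity rather than the formal series is a sensible extra precaution, but it is the same argument the paper implicitly relies on, not a different approach.
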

\begin{proof}
Combining \eqref{eq:FullAsympZero} and \eqref{def:AsymMethodZero},
we see that
\begin{align}\label{eq:ErrorAsympZero}
\mathcal{H}_{\nu}[f] - Q_m^{A}[f] &\sim \sum_{k=m}^{\infty}
\frac{\widetilde{\sigma}_k[f](\xi)}{(-\omega)^k}
M(\nu+k,\omega) \nonumber \\
&~~~~~ - \sum_{k=m+1}^{\infty} \frac{1}{(-\omega)^k} \left\{
\frac{\widetilde{\sigma}_{k-1}[f](b) -
\widetilde{\sigma}_{k-1}[f](\xi)}{g{'}(b)}
J_{\nu+k}(\omega g(b)) \right.  \\
&~~~~~ \left. - \frac{\widetilde{\sigma}_{k-1}[f](a) -
\widetilde{\sigma}_{k-1}[f](\xi)}{g{'}(a)} J_{\nu+k}(\omega g(a))
\right\}. \nonumber
\end{align}
It is clear that the behavior of the left-hand side is determined by
comparing the behavior of those two sums on the right-hand side.
Thanks to Lemma \ref{lem:MuEstim}, we can deduce that the first sum
on the right-hand side of \eqref{eq:ErrorAsympZero} behaves like
$\mathcal{O}(\omega^{-m-1})$ if $\widetilde{\sigma}_m[f](\xi)\neq0$
and $\mathcal{O}(\omega^{-m-2})$ if
$\widetilde{\sigma}_m[f](\xi)=0$. For the second sum, we easily see
that it behaves like $\mathcal{O}(\omega^{-m-\frac{3}{2}})$.
Combining these estimates gives the desired result.
\end{proof}

As a simple example, we consider $f(x)=\sin x$, $g(x)=x$, $\nu=2$
and $[a,b]=[0,1]$. Clearly, $g(x)$ has a unique zero at $\xi=0$.
Moreover, from \eqref{eq:SigmaValue} we can check easily that
$\sigma_1[f](\xi)=-2$ and $\sigma_{k}[f](\xi)=0$ for $k=2,3$. Thus,
we can expect that the decay rate of the error of $Q_m^{A}[f]$ is
$\mathcal{O}(\omega^{-m-1})$ for $m=1$ and is
$\mathcal{O}(\omega^{-m-\frac{3}{2}})$ for $m=2,3$. In our
computations, the moments $M(\nu+k,\omega)$ are evaluated by
\eqref{eq:MomLom} in the appendix directly. Numerical results are
illustrated in Figure \ref{fig:AsyZero} and we can see that they are
in good agreement with our analysis.

\begin{figure}[ht]
\centering
\includegraphics[width=7.2cm,height=6cm]{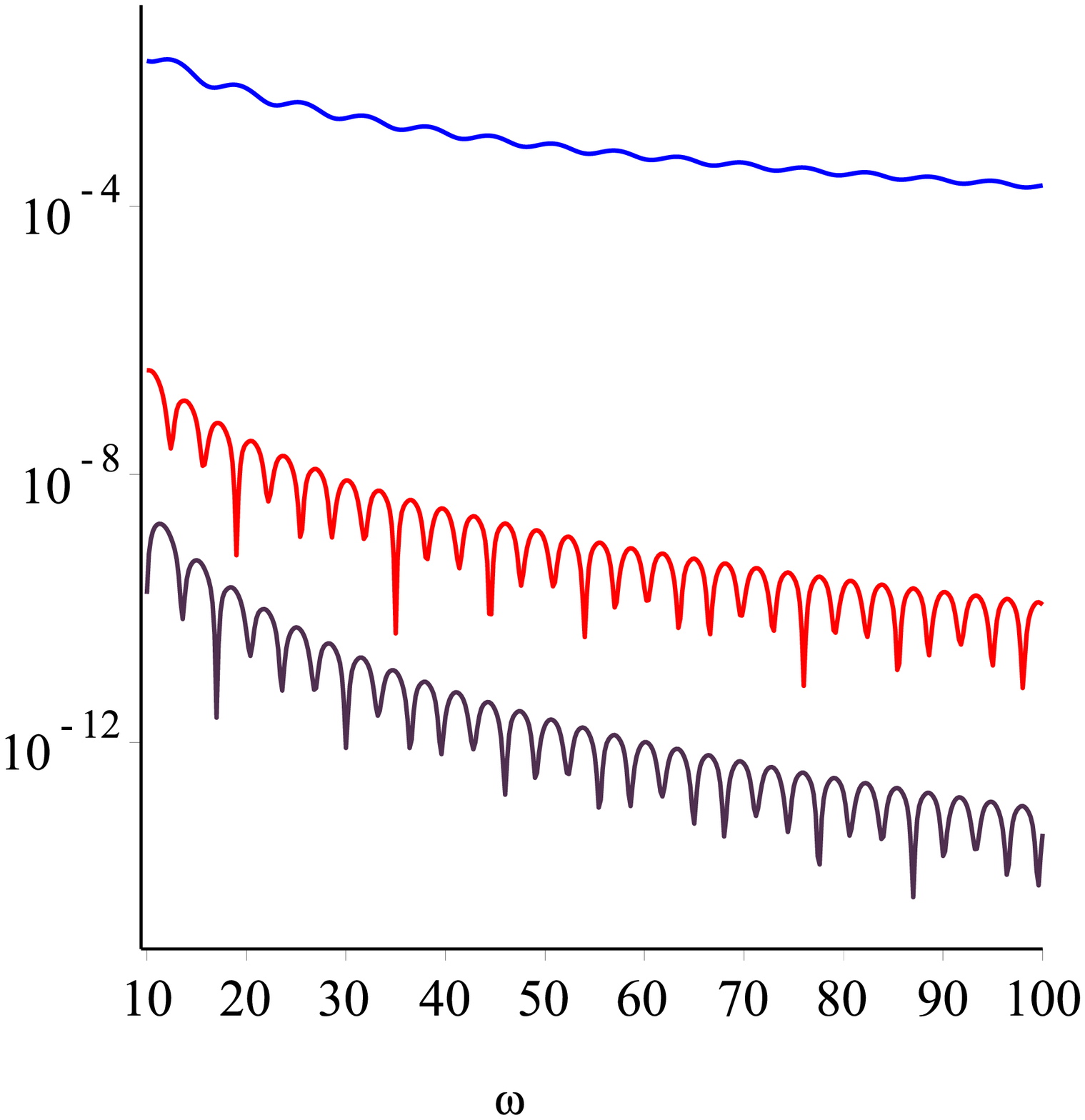}~
\includegraphics[width=7.2cm,height=6cm]{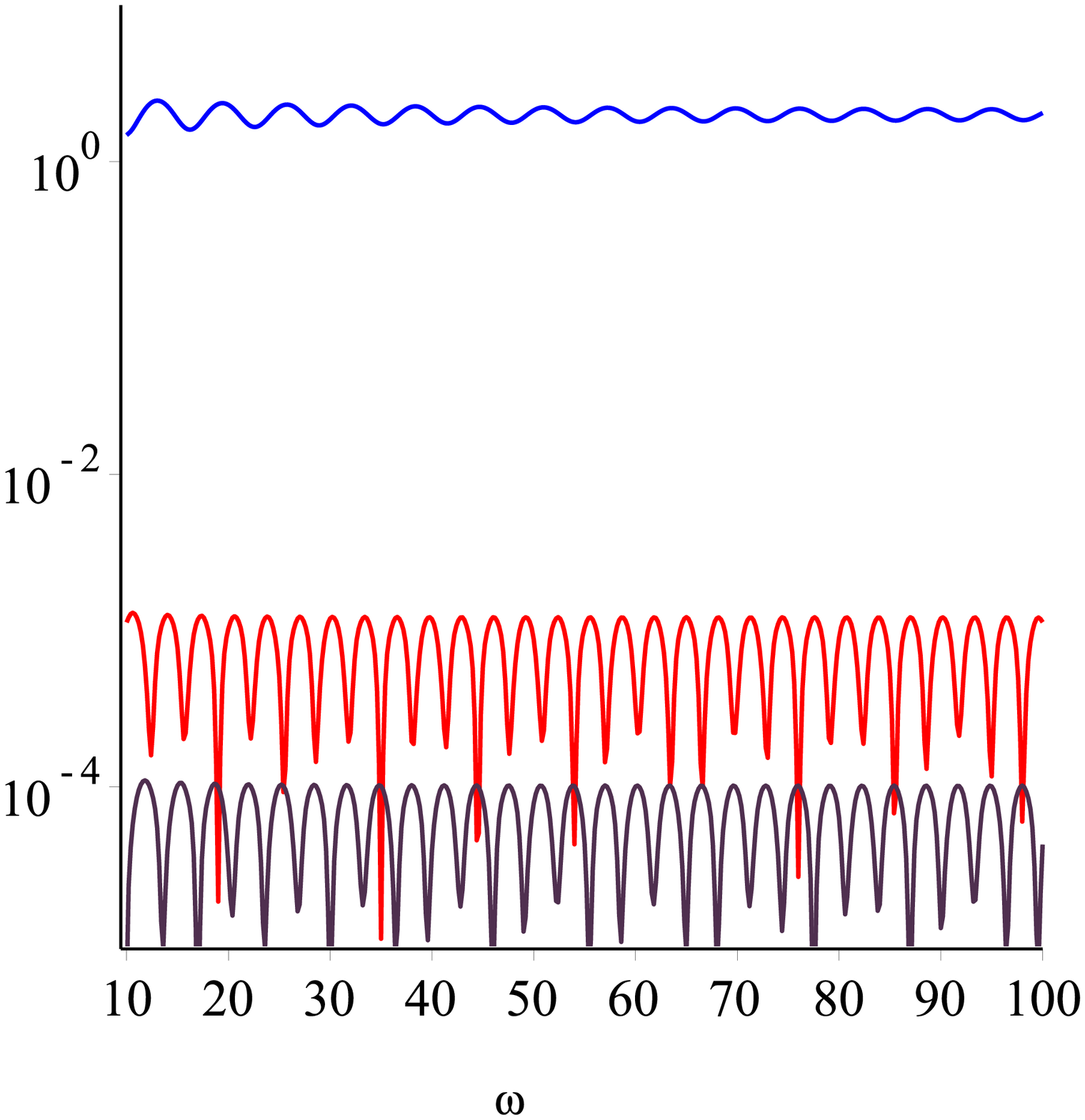}
\caption{The error of $Q_m^{A}[f]$ (left) and the error of
$Q_{1}^{A}[f]$ scaled by $\omega^2$ and the error of $Q_{m}^{A}[f]$
scaled by $\omega^{m+\frac{3}{2}}$ for $m=2,3$ (right). 
The blue, red and violet curves correspond to $m=1,2,3$,
respectively.} \label{fig:AsyZero}
\end{figure}

\begin{remark}
When using the asymptotic method \eqref{def:AsymMethodZero}, it is
necessary to evaluate the moments $M(\nu+k,\omega)$. As will be
shown below, modified Filon-type methods in Theorem
\ref{thm:FilonZero} can be used to evaluate them accurately; see
Figure \ref{fig:FilonZero} for an illustration.
\end{remark}

We now consider to construct a modified Filon-type method. Note that
the zero of $g(x)$ is a critical point of the transform, so we need
to impose interpolation conditions at this point. Let $x_\ell = \xi$
for some $\ell\in \{0,\ldots,d\}$ and let $p(x)=\sum_{k=0}^{n-1} c_k
\varphi_k(x) \in \mathcal{E}$ be the unique function that satisfies
the interpolation conditions \eqref{eq:HermiteBirkhoff} and let
$Q^{F}[f] = \mathcal{H}_{\nu}[p]$.
\begin{theorem}\label{thm:FilonZero}
Let $m_0,m_{\ell},m_d \geq m$, then
\begin{align}
\mathcal{H}_{\nu}[f]-Q^{F}[f] = \left\{\begin{array}{cc}
                {\displaystyle \mathcal{O}(\omega^{-m-1}) },  & \mbox{if $\widetilde{\sigma}_{m}[f-p](\xi)\neq 0$}, \\ [10pt]
                {\displaystyle \mathcal{O}(\omega^{-m-\frac{3}{2}}) },  & \mbox{if $\widetilde{\sigma}_{m}[f-p](\xi)=0$}.
                  \end{array}
                  \right.
\end{align}
\end{theorem}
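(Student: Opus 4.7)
The plan is to mimic the corresponding Filon-type error estimate from the no-zero case, substituting the expansion of Theorem \ref{thm:AsympZero} for that of Theorem \ref{thm:AsympCaseOne} and borrowing the bounds derived in the proof of Theorem \ref{thm:AsymErrorZero}. Setting $h(x) := f(x) - p(x)$, linearity of the transform gives $\mathcal{H}_\nu[f] - Q^{F}[f] = \mathcal{H}_\nu[h]$, and the Hermite interpolation data $m_0, m_\ell, m_d \geq m$ translate into the pointwise vanishing
\begin{equation*}
h^{(j)}(a) = h^{(j)}(\xi) = h^{(j)}(b) = 0, \qquad j = 0, 1, \ldots, m-1.
\end{equation*}

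Next I would feed $h$ directly into Theorem \ref{thm:AsympZero}. Recall from the paragraph following \eqref{eq:SigXi} that $\widetilde{\sigma}_k[h](x)$ is a linear combination of $h(x), h'(x), \ldots, h^{(k)}(x)$ for $x \neq \xi$, and that $\widetilde{\sigma}_k[h](\xi)$ is a linear combination of $h'(\xi), h''(\xi), \ldots, h^{(k)}(\xi)$. Together with the vanishing conditions above, this forces
\begin{equation*}
\widetilde{\sigma}_k[h](a) = \widetilde{\sigma}_k[h](\xi) = \widetilde{\sigma}_k[h](b) = 0, \qquad k = 0, 1, \ldots, m-1,
\end{equation*}
so that in the expansion \eqref{eq:FullAsympZero} applied to $h$ the first series collapses to $\sum_{k \geq m}$ and the second to $\sum_{k \geq m+1}$.

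From here the argument runs identically to the proof of Theorem \ref{thm:AsymErrorZero}. The leading surviving contribution from the first series is $\widetilde{\sigma}_m[h](\xi)\,M(\nu+m,\omega)/(-\omega)^m$, which by Lemma \ref{lem:MuEstim} is $\mathcal{O}(\omega^{-m-1})$ when $\widetilde{\sigma}_m[f-p](\xi) \neq 0$ and at most $\mathcal{O}(\omega^{-m-2})$ otherwise; the leading surviving contribution from the second series is $\mathcal{O}(\omega^{-m-3/2})$ by the standard bound $J_{\nu+k}(z) = \mathcal{O}(z^{-1/2})$ as $z \to \infty$. Comparing the two dominant terms yields precisely the claimed dichotomy.

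The only delicate point, already encountered in the proof of Theorem \ref{thm:AsymErrorZero}, is the bookkeeping at $\xi$: one must be sure that $\widetilde{\sigma}_k[h](\xi)$ is really controlled by the derivatives $h'(\xi),\ldots,h^{(k)}(\xi)$ rather than by $h(\xi)$ alone, which is exactly what makes the multiplicity condition $m_\ell \geq m$ the correct natural hypothesis at the zero of $g$. Once this structural observation is invoked via \eqref{eq:SigXi}, the rest of the proof is a direct transcription of the asymptotic-method analysis with $f$ replaced by $h$.
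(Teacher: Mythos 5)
Your proposal is correct and follows essentially the same route as the paper: set $h=f-p$, use the Hermite data at $a$, $\xi$, $b$ together with the dependence of $\widetilde{\sigma}_k[h]$ on $h$ and its derivatives (and on $h',\ldots,h^{(k)}$ at $\xi$) to kill the first $m$ terms of the expansion \eqref{eq:FullAsympZero}, and then invoke Lemma \ref{lem:MuEstim} and the $J_{\nu}(z)=\mathcal{O}(z^{-1/2})$ decay exactly as in Theorem \ref{thm:AsymErrorZero}. The only cosmetic difference is that the paper simply cites Theorem \ref{thm:AsymErrorZero} at this point, whereas you re-derive its estimates explicitly.
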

\begin{proof}
Let $h(x) = f(x) - p(x) $ and we clearly have
$\mathcal{H}_{\nu}[f]-Q^{F}[f] = \mathcal{H}_{\nu}[h]$.
Additionally, from the interpolation conditions and the dependence
of $\widetilde{\sigma}_k[f](x)$ on $f(x)$ and its derivatives, we
obtain
\begin{align*}
\widetilde{\sigma}_{k}[h](a) = \widetilde{\sigma}_{k}[h](\xi) =
\widetilde{\sigma}_{k}[h](b) = 0, \quad k=0,\ldots,m-1.
\end{align*}
The result follows from Theorem \ref{thm:AsymErrorZero}.
\end{proof}

\begin{figure}[ht]
\centering
\includegraphics[width=7.2cm,height=6cm]{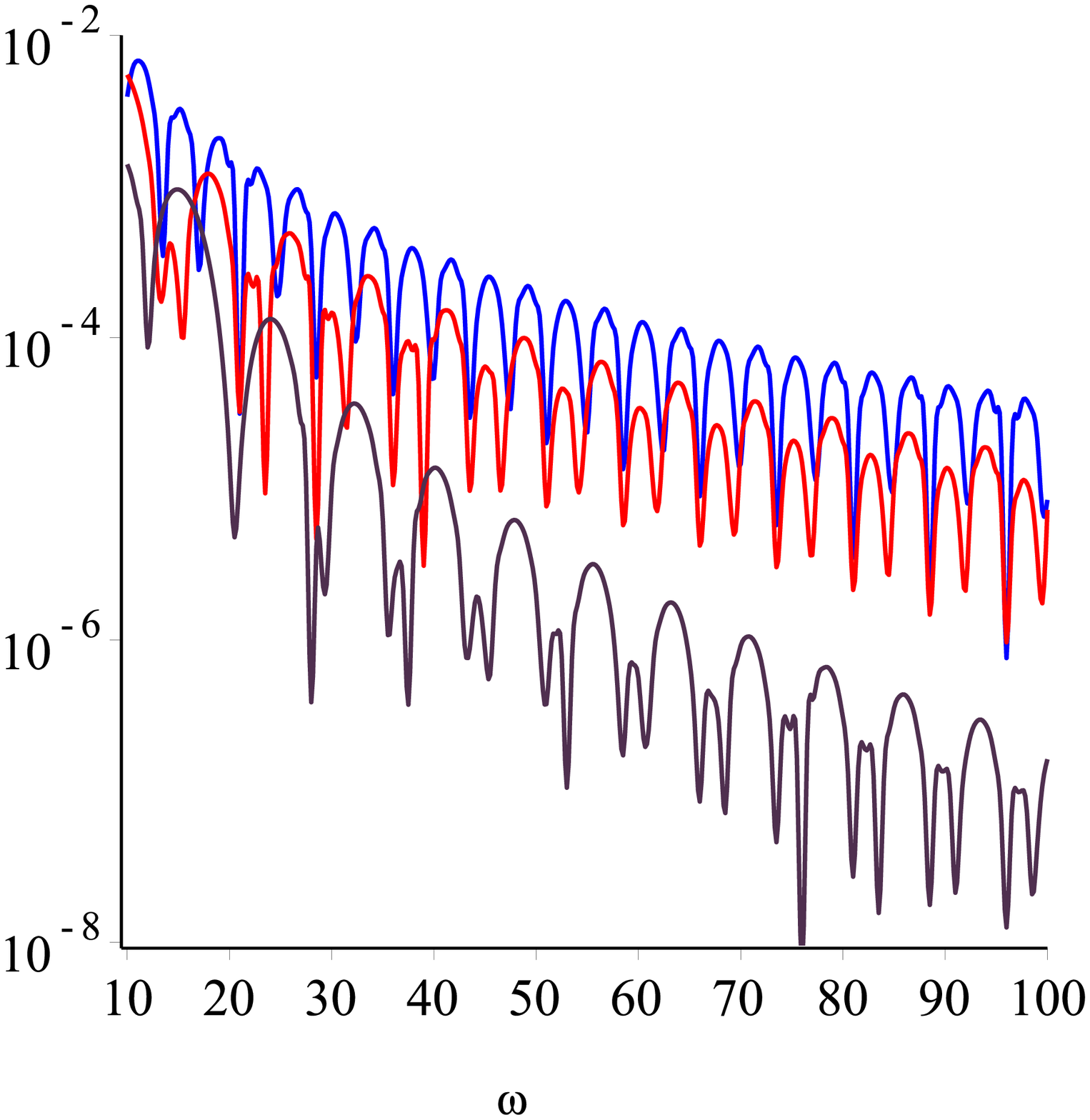}~
\includegraphics[width=7.2cm,height=6cm]{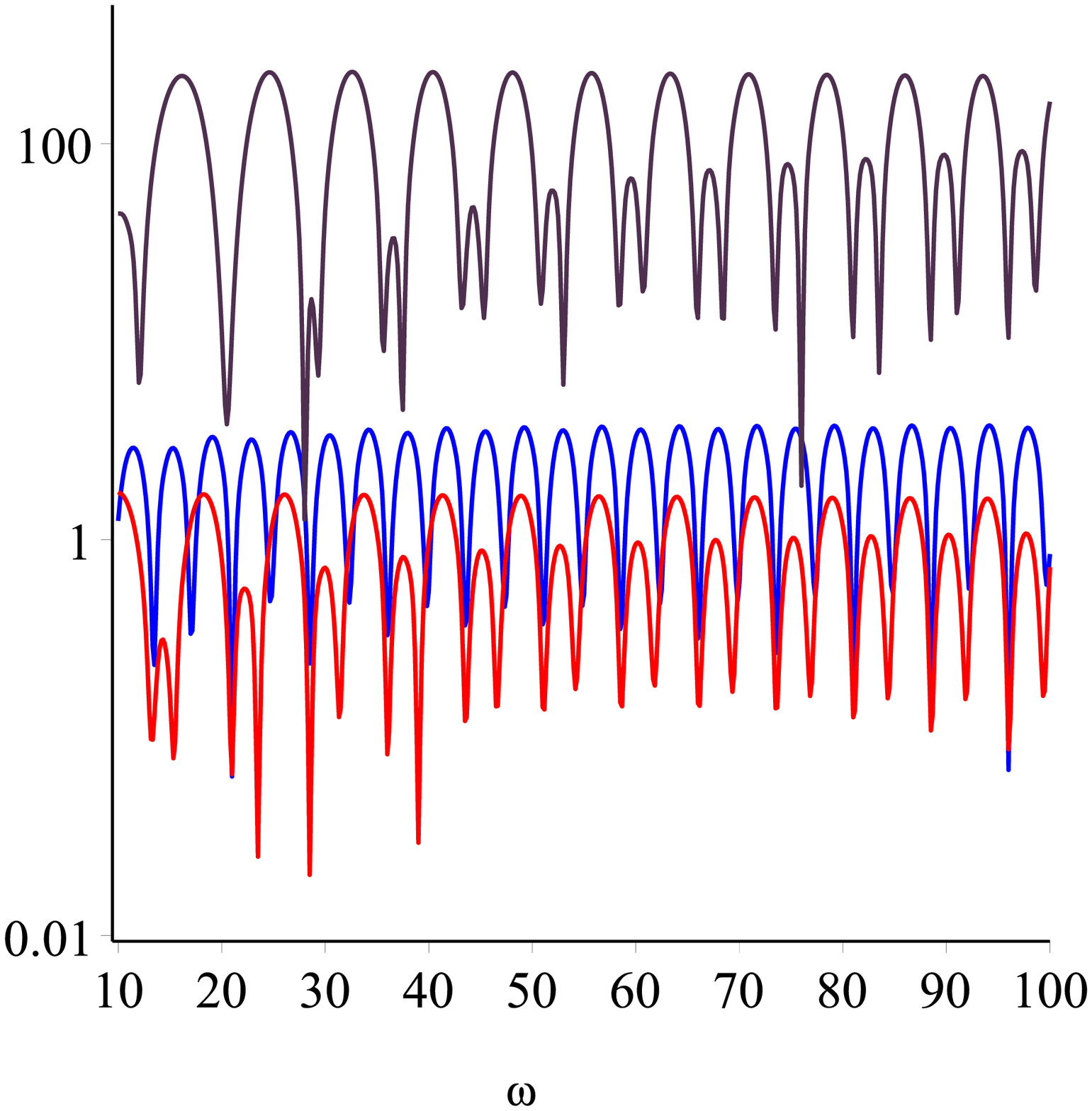}
\caption{The left panel shows the error of $Q^{F}[f]$ with nodes
$\{0,1\}$ and multiplicities all one (blue), $Q^{F}[f]$ with the
nodes $\{0,\frac{1}{3},\frac{2}{3},1\}$ and multiplicities all one
(red) and $Q^{F}[f]$ with nodes $\{0,\frac{1}{3},\frac{2}{3},1\}$
and multiplicities $\{3,1,1,3\}$ (violet). The right panel shows the
error of the first two methods scaled by $\omega^{\frac{5}{2}}$ and
the last one scaled by $\omega^{\frac{9}{2}}$.}
\label{fig:FilonZero}
\end{figure}

As an example, we consider the evaluation of the following moment
\begin{align}
M(0,\omega) = \int_{0}^{1} J_0(\omega \sin x) dx.
\end{align}
This moment is a finite Hankel transform with $f(x)=1$, $g(x)=\sin
x$, $\nu=0$ and $\xi=0$. We compare the accuracy of three modified
Filon-type methods: $Q^{F}[f]$ with nodes $\{0,1\}$ and
multiplicities all one, $Q^{F}[f]$ with the nodes
$\{0,\frac{1}{3},\frac{2}{3},1\}$ and multiplicities all one and
$Q^{F}[f]$ with the nodes $\{0,\frac{1}{3},\frac{2}{3},1\}$ and
multiplicities $\{3,1,1,3\}$. For the first two methods, they
correspond to $m=1$ and it is easy to check from \eqref{eq:SigXi}
that $\widetilde{\sigma}_{1}[f-p](0)=0$. Thus, the decay rate of the
error of both methods is $\mathcal{O}(\omega^{-\frac{5}{2}})$. For
the last method, we can see that $m=3$ and we can check from
\eqref{eq:SigXi} that
$\widetilde{\sigma}_{3}[f-p](0)=-(f{''}(0)-p{''}(0))=0$. Therefore,
the decay rate of its error is $\mathcal{O}(\omega^{-\frac{9}{2}})$.
In our implementation, the modified moments
$\{\mu_{k,\nu}^{\omega}\}_{k=0}^{n-1}$ are evaluated by Theorem
\ref{eq:ModMomRecRel} in the appendix. Numerical results are
presented in Figure \ref{fig:FilonZero}. Clearly, we can see that
the decay rates of these three methods are consistent with the
expected rates.

\section{The case of stationary points}\label{sec:Stationary}
When $g{'}(\zeta)=0$ for some $\zeta\in[a,b]$, we call $x=\zeta$ a
stationary point. Further, we call $x=\zeta$ a stationary point of
order $r$ if $g^{(j)}(\zeta) = 0$, $j=1,\ldots,r$, where $r\geq1$
and $g^{(r+1)}(\zeta) \neq 0$.

In this section, we restrict our attention to the case that $g(x)$
has only one stationary point of order $r$ at $x=\zeta\in[a,b]$,
i.e., $g{'}(x)\neq0$ for $x\in[a,b]\backslash \{\zeta\}$. We also
assume that $g(x)\neq0$ for $x\in[a,b]\backslash\{\zeta\}$. The
stationary point can be classified into the following two types:
\begin{enumerate}
\item Type I: $g(\zeta)\neq0$.

\item Type II: $g(\zeta)=0$.
\end{enumerate}
Here we shall restrict our analysis to the latter case, i.e.,
$x=\zeta$ is a stationary point of type II, since a similar analysis
can be performed for the former case. Moreover, we point out that if
$g(x)$ has a zero at $x=\overline{\zeta}\in[a,b]$ and
$\overline{\zeta} \neq \zeta$, then we can divide the interval
$[a,b]$ into two subintervals such that one contains the zero
$x=\overline{\zeta}$ and the other contains the stationary point
$x=\zeta$. To ensure the existence of the transform, we shall assume
throughout this section that $\Re(\nu)>-\frac{1}{r+1}$.

\subsection{Asymptotic expansions}
In order to gain insight into the mechanism of performing
integration by parts when $g(x)$ has a stationary point, we return
to Theorem \ref{thm:AsympCaseOne} again. From \eqref{eq:progression}
we can see that a pole at $x=\zeta$ is introduced in the term inside
the bracket $\sigma_{k}[f](x)/g{'}(x)$ and the term
$\sigma_{k}[f](x)/g(x)$. The existence of pole implies that
\eqref{def:SigmaCase1} is still no longer valid. However, this
problem can be circumvented by subtracting the first several terms
of $\sigma_k[f](x)$ about $x=\zeta$ before we perform integration by
parts. Note that both $\sigma_{k}[f](x)/g{'}(x)$ and
$\sigma_{k}[f](x)/g(x)$ have a pole at $x=\zeta$ and the pole is of
order $r$ for the former and is of order $r+1$ for the latter. It is
sufficient to subtract the first $r+1$ terms of the Taylor expansion
of $\sigma_k[f](x)$ about $x=\zeta$ to ensure that the resulting
$\sigma_{k+1}[f](x)$ has a removable singularity.

For simplicity in exposition, let us define
\begin{align}\label{eq:TaylorTrunc}
T_r[f](x,y) = \sum_{j=0}^{r} \frac{f^{(j)}(y)}{j!} (x-y)^j.
\end{align}
We start from the case that $x=\zeta$ is an interior stationary
point, i.e., $\zeta\in(a,b)$.
\begin{theorem}\label{thm:AsySta}
Assume that $f(x),g(x)\in C^{\infty}[a,b]$ and $g(x)$ has a
stationary point of type II and of order $r$ at $x=\zeta\in(a,b)$.
Furthermore, we assume that $g(x),g{'}(x)\neq0$ for $x\in
[a,b]\setminus\{\zeta\}$. Then, for $\omega\rightarrow\infty$,
\begin{align}\label{eq:FullAsySta}
\mathcal{H}_{\nu}[f] &\sim \sum_{k=0}^{\infty} \frac{1}{(-\omega)^k}
\sum_{j=0}^{r} \frac{ \widehat{\sigma}_{k}[f]^{(j)}(\zeta) }{j!}
M_j(\zeta,\nu+k,\omega) \nonumber
\\
&~~~~~ - \sum_{k=1}^{\infty} \frac{1}{(-\omega)^k} \left\{
\frac{\widehat{\sigma}_{k-1}[f](b) -
T_{r}[\widehat{\sigma}_{k-1}](b,\zeta)}{g{'}(b)} J_{\nu+k}(\omega
g(b)) \right. \\
&~~~~~ \left. - \frac{\widehat{\sigma}_{k-1}[f](a) -
T_{r}[\widehat{\sigma}_{k-1}](a,\zeta)}{g{'}(a)} J_{\nu+k}(\omega
g(a)) \right\}, \nonumber
\end{align}
where
\begin{align}\label{eq:FulSigSta}
\widehat{\sigma}_{0}[f](x) = f(x),~~ \widehat{\sigma}_{k}[f](x) =
g(x)^{\nu+k} \frac{d}{dx} \left[ \frac{\displaystyle
\widehat{\sigma}_{k-1}[f](x) -
T_{r}[\widehat{\sigma}_{k-1}](x,\zeta) }{ g(x)^{\nu+k} g{'}(x) }
\right],~ k\geq1.
\end{align}
\end{theorem}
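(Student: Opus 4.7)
The plan is to follow the induction-on-$m$ template used in Theorems \ref{thm:AsympCaseOne} and \ref{thm:AsympZero}, now employing a Taylor polynomial of degree $r$ about $x=\zeta$ in place of the single value at $x=\xi$ used in the zero case. Concretely, I would prove by induction on $m\geq 0$ the finite identity
\begin{align*}
\mathcal{H}_{\nu}[f] &= \sum_{k=0}^{m-1} \frac{1}{(-\omega)^k} \sum_{j=0}^{r} \frac{\widehat{\sigma}_k[f]^{(j)}(\zeta)}{j!}\, M_j(\zeta,\nu+k,\omega) \\
&\quad - \sum_{k=1}^{m} \frac{1}{(-\omega)^k}\left\{ \frac{\widehat{\sigma}_{k-1}[f](b) - T_r[\widehat{\sigma}_{k-1}](b,\zeta)}{g'(b)} J_{\nu+k}(\omega g(b)) \right. \\
&\quad \left. {}- \frac{\widehat{\sigma}_{k-1}[f](a) - T_r[\widehat{\sigma}_{k-1}](a,\zeta)}{g'(a)} J_{\nu+k}(\omega g(a)) \right\} + \frac{1}{(-\omega)^m}\, \mathcal{H}_{\nu+m}[\widehat{\sigma}_m[f]],
\end{align*}
and then let $m\to\infty$ to recover \eqref{eq:FullAsySta} as an asymptotic series.

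For the inductive step, I would split $\widehat{\sigma}_m[f] = T_r[\widehat{\sigma}_m](\cdot,\zeta) + \bigl(\widehat{\sigma}_m[f] - T_r[\widehat{\sigma}_m](\cdot,\zeta)\bigr)$. Integrating the Taylor polynomial part against $J_{\nu+m}(\omega g(x))$ gives, by the very definition of the generalized moments, $\sum_{j=0}^r \widehat{\sigma}_m[f]^{(j)}(\zeta)\, M_j(\zeta,\nu+m,\omega)/j!$. To the remainder part I would apply \eqref{eq:DerivOscillator} with $\nu$ replaced by $\nu+m$ and integrate by parts, rewriting the integral as
\[
\frac{1}{\omega}\int_a^b \frac{\widehat{\sigma}_m[f](x) - T_r[\widehat{\sigma}_m](x,\zeta)}{g(x)^{\nu+m+1} g'(x)}\, \frac{d}{dx}\bigl[ g(x)^{\nu+m+1} J_{\nu+m+1}(\omega g(x))\bigr]\,dx.
\]
Integration by parts produces exactly the boundary terms appearing in \eqref{eq:FullAsySta} together with $-\omega^{-1}\mathcal{H}_{\nu+m+1}[\widehat{\sigma}_{m+1}[f]]$, which closes the induction when combined with the inductive hypothesis.

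The main obstacle, and precisely the reason the Taylor correction is needed, is justifying this integration by parts at $x=\zeta$. Near $\zeta$ one has $g(x)\sim c(x-\zeta)^{r+1}$ and $g'(x)\sim c(r+1)(x-\zeta)^r$, so after the formal cancellation of $g(x)^{\nu+m+1}$ the recurrence \eqref{eq:FulSigSta} can be recast as
\[
\widehat{\sigma}_{m+1}[f](x) = \frac{d}{dx}\!\left[\frac{\widehat{\sigma}_m[f](x) - T_r[\widehat{\sigma}_m](x,\zeta)}{g'(x)}\right] - (\nu+m+1)\,\frac{\widehat{\sigma}_m[f](x) - T_r[\widehat{\sigma}_m](x,\zeta)}{g(x)},
\]
in which the denominators introduce poles of orders $r$ and $r+1$ at $\zeta$, respectively. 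Subtracting $T_r[\widehat{\sigma}_m](\cdot,\zeta)$ forces the numerators to vanish to order at least $r+1$ at $\zeta$, annihilating both poles and leaving a $C^\infty$ integrand; in particular $\widehat{\sigma}_{m+1}[f]\in C^\infty[a,b]$. This smoothness, carried along the induction, simultaneously legitimises the integration by parts, makes the boundary values at $a,b$ well defined (since $g'(a), g'(b)\neq 0$), and lets the Taylor truncation $T_r[\widehat{\sigma}_{k-1}]$ in the next step make sense. With smoothness in hand the identity propagates from $m$ to $m+1$, and letting $m\to\infty$ yields \eqref{eq:FullAsySta}.
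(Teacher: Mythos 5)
Your proposal is correct and follows essentially the same route as the paper: the paper's proof of Theorem \ref{thm:AsySta} is simply the argument of Theorem \ref{thm:AsympZero} with the single subtracted value $\widetilde{\sigma}_m[f](\xi)$ replaced by the degree-$r$ Taylor polynomial $T_r[\widehat{\sigma}_m](\cdot,\zeta)$, exactly as you do, with the Taylor part producing the generalized moments $M_j(\zeta,\nu+k,\omega)$ and the remainder integrated by parts via \eqref{eq:DerivOscillator}. Your additional justification that the subtraction annihilates the poles of orders $r$ and $r+1$ at $\zeta$ matches the discussion the paper gives before the theorem, so nothing is missing.
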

\begin{proof}
The proof is similar to that of Theorem \ref{thm:AsympZero}. We omit
the details.
\end{proof}

When the stationary point $x=\zeta$ is at an endpoint, the result in
Theorem \ref{thm:AsySta} should be modified slightly. Without loss
of generality, we assume that the stationary point is located at the
left endpoint, i.e., $\zeta=a$.
\begin{theorem}\label{thm:EndAsySta}
Assume that $f(x),g(x)\in C^{\infty}[a,b]$ and $g(x)$ has a
stationary point of type II and of order $r$ at $x=a$. Furthermore,
we assume that $g(x),g{'}(x)\neq0$ for $x\in (a,b]$. Then, for
$\omega\rightarrow\infty$,
\begin{align}\label{eq:EndAsySt2}
\mathcal{H}_{\nu}[f] &\sim \sum_{k=0}^{\infty} \frac{1}{(-\omega)^k}
\sum_{j=0}^{r} \frac{ \widehat{\sigma}_{k}[f]^{(j)}(a) }{j!}
M_j(a,\nu+k,\omega) \nonumber
\\
&~~~~~ - \sum_{k=1}^{\infty} \frac{1}{(-\omega)^k}
\left[\frac{\widehat{\sigma}_{k-1}[f](b) -
T_{r}[\widehat{\sigma}_{k-1}](b,a)}{g{'}(b)} \right]
J_{\nu+k}(\omega g(b)),
\end{align}
where $\widehat{\sigma}_k[f](x)$ are defined as in
\eqref{eq:FulSigSta}.
\end{theorem}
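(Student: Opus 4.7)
The plan is to adapt the induction-plus-integration-by-parts scheme used in Theorems \ref{thm:AsympZero} and \ref{thm:AsySta} to the case in which the type II stationary point of order $r$ coincides with the left endpoint, $\zeta=a$. Specifically, I would prove by induction on $m\geq 0$ the finite identity
\begin{align*}
\mathcal{H}_{\nu}[f] &= \sum_{k=0}^{m-1}\frac{1}{(-\omega)^k}\sum_{j=0}^{r}\frac{\widehat{\sigma}_k[f]^{(j)}(a)}{j!}\,M_j(a,\nu+k,\omega) \\
&\quad -\sum_{k=1}^{m}\frac{1}{(-\omega)^k}\left[\frac{\widehat{\sigma}_{k-1}[f](b)-T_r[\widehat{\sigma}_{k-1}](b,a)}{g'(b)}\right]J_{\nu+k}(\omega g(b)) \\
&\quad +\frac{1}{(-\omega)^m}\,\mathcal{H}_{\nu+m}[\widehat{\sigma}_m[f]],
\end{align*}
from which \eqref{eq:EndAsySt2} follows by letting $m\to\infty$ in the asymptotic sense.

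The base case $m=0$ is trivial. For the inductive step $m\to m+1$, I would decompose
\[
\widehat{\sigma}_m[f](x) = T_r[\widehat{\sigma}_m](x,a) + \bigl(\widehat{\sigma}_m[f](x)-T_r[\widehat{\sigma}_m](x,a)\bigr)
\]
and split $\mathcal{H}_{\nu+m}[\widehat{\sigma}_m[f]]$ accordingly. By linearity and the definition of the generalized moments, the Taylor polynomial part contributes exactly $\sum_{j=0}^{r}\widehat{\sigma}_m[f]^{(j)}(a)M_j(a,\nu+m,\omega)/j!$, supplying the next block of moment terms. On the remainder I would apply integration by parts using \eqref{eq:DerivOscillator} in the same fashion as in the proof of Theorem \ref{thm:AsympCaseOne}; by \eqref{eq:FulSigSta}, the resulting integrand is precisely $-\tfrac{1}{\omega}\widehat{\sigma}_{m+1}[f]\,J_{\nu+m+1}(\omega g)$. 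The decisive simplification relative to Theorem \ref{thm:AsySta} is that the boundary contribution at $x=a$ vanishes identically, since $\widehat{\sigma}_m[f](a)-T_r[\widehat{\sigma}_m](a,a)=0$; only the $x=b$ term survives, producing the one-sided boundary sum of \eqref{eq:EndAsySt2}.

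The main obstacle is verifying that each $\widehat{\sigma}_m[f]$ lies in $C^{\infty}[a,b]$, so that the integration by parts and the derivative evaluations $\widehat{\sigma}_m[f]^{(j)}(a)$ are classically meaningful despite $g$ and $g'$ vanishing at $a$ to orders $r+1$ and $r$, respectively. Expanding the outer derivative in \eqref{eq:FulSigSta} cancels the $g^{\nu+m}$ prefactor and yields
\[
\widehat{\sigma}_m[f] = \frac{u'}{g'} - (\nu+m)\frac{u}{g} - \frac{u\,g''}{(g')^2}, \qquad u := \widehat{\sigma}_{m-1}[f]-T_r[\widehat{\sigma}_{m-1}](\,\cdot\,,a),
\]
with no fractional powers remaining. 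By Taylor's theorem $u$ vanishes at $a$ to order $r+1$, while $g$, $g'$, $g''$ vanish to orders $r+1$, $r$, $r-1$; hence each of the three summands extends to a smooth function at $a$. An induction on $m$ then gives $\widehat{\sigma}_m[f]\in C^{\infty}[a,b]$, and a parallel bookkeeping shows that it is a linear combination of $f,f',\ldots,f^{(m)}$ with coefficients rational in $g$ and its derivatives. With smoothness secured, the induction closes and the claimed expansion \eqref{eq:EndAsySt2} follows.
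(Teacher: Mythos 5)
Your proposal is correct and follows essentially the same route as the paper: the paper obtains \eqref{eq:EndAsySt2} from Theorem \ref{thm:AsySta} (itself proved by exactly your Taylor-subtraction plus integration-by-parts induction) by observing that $\lim_{x\to a}\bigl(\widehat{\sigma}_{k-1}[f](x)-T_r[\widehat{\sigma}_{k-1}](x,a)\bigr)/g'(x)=0$, which is precisely your vanishing-boundary-term argument at $x=a$. The only difference is presentational: you re-run the induction directly with $\zeta=a$ and explicitly verify the smoothness of $\widehat{\sigma}_m[f]$ near the stationary point, a detail the paper leaves implicit.
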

\begin{proof}
Note that
\begin{align}
\lim_{x\rightarrow a} \frac{ \widehat{\sigma}_{k-1}[f](x) -
T_{r}[\widehat{\sigma}_{k-1}](x,a) }{g{'}(x)} =0. \nonumber
\end{align}
The expansion \eqref{eq:EndAsySt2} follow from Theorem
\ref{thm:AsySta}.
\end{proof}

As a consequence of Theorem \ref{thm:EndAsySta}, we have the
following result.
\begin{corollary}\label{cor:AsyStaSim}
In the simplest case $g(x)=x^{r+1}$ and $0=a<b$, then
\begin{align}\label{eq:AsymStExam}
\mathcal{H}_{\nu}[f] &\sim \sum_{k=0}^{\infty} \frac{1}{(-\omega)^k}
\sum_{j=0}^{r} \frac{ \widehat{\sigma}_{k}[f]^{(j)}(0)}{j!}
M_j(0,\nu+k,\omega) \nonumber
\\
&~~~~~ - \sum_{k=1}^{\infty} \frac{1}{(-\omega)^k} \bigg[
\widehat{\sigma}_{k-1}[f](b) - T_{r}[\widehat{\sigma}_{k-1}](b,0)
\bigg] \frac{J_{\nu+k}(\omega b^{r+1})}{(r+1)b^{r}}.
\end{align}
where $\widehat{\sigma}_k[f](x)$ are defined as in
\eqref{eq:FulSigSta}.
\end{corollary}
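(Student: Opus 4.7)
The plan is to derive this corollary as a direct specialization of Theorem \ref{thm:EndAsySta}, since the setup $g(x) = x^{r+1}$ with $a = 0$ fits precisely into the endpoint-stationary-point framework already handled there. Thus no new induction or integration-by-parts argument is needed; the entire work consists of verifying the hypotheses and simplifying the resulting expression.

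First I would check the hypotheses. The function $g(x) = x^{r+1}$ is smooth, satisfies $g^{(j)}(0) = 0$ for $j = 1, \ldots, r$, while $g^{(r+1)}(0) = (r+1)! \neq 0$, so $x = 0$ is a stationary point of order $r$. Moreover $g(0) = 0$, placing us in type II, and for $x \in (0, b]$ we have both $g(x) = x^{r+1} > 0$ and $g'(x) = (r+1) x^r > 0$. Hence the assumptions of Theorem \ref{thm:EndAsySta} (with $\zeta = a = 0$) are met.

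Next I would substitute into the expansion \eqref{eq:EndAsySt2}. The interior/endpoint moment terms at $\zeta = 0$ carry over unchanged, yielding the first double sum in \eqref{eq:AsymStExam}. For the boundary contribution at $x = b$, one simply evaluates
\[
g(b) = b^{r+1}, \qquad g'(b) = (r+1) b^{r},
\]
which transforms the generic factor $J_{\nu+k}(\omega g(b))/g'(b)$ into $J_{\nu+k}(\omega b^{r+1})/((r+1)b^{r})$. Collecting these gives precisely the stated expansion.

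There is essentially no obstacle: the corollary is a direct substitution, so the only thing that requires a brief mention is that the existence of $\mathcal{H}_{\nu}[f]$ is guaranteed because $g(x)$ behaves like $x^{r+1}$ near the origin and $\Re(\nu) > -1/(r+1)$, matching the global assumption of Section \ref{sec:Stationary}.
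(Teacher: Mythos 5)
Your proposal is correct and follows exactly the paper's route: the corollary is obtained by verifying that $g(x)=x^{r+1}$ has a type II stationary point of order $r$ at $x=a=0$ and then substituting $g(b)=b^{r+1}$, $g{'}(b)=(r+1)b^{r}$ into the expansion \eqref{eq:EndAsySt2} of Theorem \ref{thm:EndAsySta}. The paper simply states that the result follows immediately from that theorem, so your more explicit verification of the hypotheses is the same argument spelled out in slightly more detail.
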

\begin{proof}
It follows immediately from Theorem \ref{thm:EndAsySta}.
\end{proof}

We now clarify the dependence of $\widehat{\sigma}_k[f](x)$ on
$f(x)$ and its derivatives. When $x\neq\zeta$, one can easily verify
that each $\widehat{\sigma}_k[f](x)$ is a linear combination of
$f(x), f{'}(x),\ldots,f^{(k)}(x)$. In the case $x=\zeta$ and assume
that $f$ and $g$ are analytic functions in a neighborhood of the
interval $[a,b]$, from Theorem \ref{thm:DepRelSigma} in the appendix
we know that each $\widehat{\sigma}_k[f]^{(j)}(\zeta)$ is a linear
combination of the values $f^{(r+1+j)}(\zeta),f^{(r+2+j)}(\zeta),$
$\ldots,f^{(k(r+1)+j)}(\zeta)$. 

As before, the asymptotic methods can be defined by truncating the
expansion in Theorem \ref{thm:AsySta} when $\zeta\in(a,b)$ and by
truncating the expansion in Theorem \ref{thm:EndAsySta} when
$\zeta=a$. For simplicity of presentation, we only consider the
latter case. The asymptotic method is defined by
\begin{align}\label{def:EndAsmMethII}
Q_{m}^{A}[f] &= \sum_{k=0}^{m-1} \frac{1}{(-\omega)^k}
\sum_{j=0}^{r} \frac{ \widehat{\sigma}_{k}[f]^{(j)}(a) }{j!}
M_j(a,\nu+k,\omega) \nonumber
\\
&~~~~~ - \sum_{k=1}^{m} \frac{1}{(-\omega)^k} \bigg[
\frac{\widehat{\sigma}_{k-1}[f](b) -
T_{r}[\widehat{\sigma}_{k-1}](b,a)}{g{'}(b)} \bigg] J_{\nu+k}(\omega
g(b)).
\end{align}
Moreover, the above discussion shows that the method $Q_m^{A}[f]$ is
a linear combination of the values
$f(b),f{'}(b),\ldots,f^{(m-1)}(b)$ and
$f(a),f{'}(a),\ldots,f^{(m(r+1)-1)}(a)$.

\subsection{Error estimate}
To derive error estimates of the asymptotic method $Q_m^{A}[f]$
defined in \eqref{def:EndAsmMethII}, we need to show the estimate of
$M_j(\zeta,\nu,\omega)$. 
\begin{lemma}\label{lem:EstMomSta}
Assume that $g(x)\in C^{\infty}[a,b]$ and $g(x)$ has a stationary
point of type II and of order $r$ at $x=\zeta\in[a,b]$. Furthermore,
assume that $g(x),g{'}(x)\neq0$ for $x\in [a,b]\setminus\{\zeta\}$.
Then, for $0\leq j \leq r$ and $\omega\rightarrow\infty$,
\begin{align}
M_j(\zeta,\nu,\omega) = \mathcal{O}(\omega^{-\frac{j+1}{r+1}}).
\end{align}
\end{lemma}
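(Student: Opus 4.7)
The plan is to decompose $M_j(\zeta,\nu,\omega)$ into near- and far-field pieces relative to the stationary point and analyse each with a different tool, refining the argument of Lemma \ref{lem:MuEstim} to cope with the vanishing of $g{'}(\zeta)$.

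First I would fix a small $\delta>0$ so that $\zeta$ is the only zero or critical point of $g$ in $[\zeta-\delta,\zeta+\delta]\cap[a,b]$, and split
\[
M_j(\zeta,\nu,\omega) = I_{\mathrm{near}} + I_{\mathrm{far}},
\]
with $I_{\mathrm{near}}$ the integral over $[\zeta-\delta,\zeta+\delta]\cap[a,b]$. On the far piece both $g$ and $g{'}$ are bounded away from $0$, so applying Theorem \ref{thm:AsympCaseOne} with $f(x)=(x-\zeta)^j$ immediately yields $I_{\mathrm{far}}=\mathcal{O}(\omega^{-3/2})$; since $(j+1)/(r+1)\le 1<3/2$ for $0\le j\le r$, this is dominated by the target rate.

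The essential work is on $I_{\mathrm{near}}$. Starting from the Taylor factorisation $g(x)=c_{r+1}(x-\zeta)^{r+1}(1+(x-\zeta)h(x))$ and extracting the smooth $(r+1)$-th root of the bracket, I can write $g(x)=\epsilon\,u(x)^{r+1}$ locally, with $\epsilon\in\{\pm1\}$ determined by the parity of $r+1$ and the sign of $c_{r+1}$, and $u\in C^{\infty}$ satisfying $u(\zeta)=0$, $u{'}(\zeta)\neq0$. Substituting $y=u(x)$ and then rescaling $s=\omega^{1/(r+1)}y$ produces
\[
I_{\mathrm{near}} = \omega^{-(j+1)/(r+1)}\int_{-R_1}^{R_2} G\bigl(\omega^{-1/(r+1)}s\bigr)\,s^j\,J_\nu(\epsilon s^{r+1})\,ds,
\]
where $G$ is a smooth bounded function on $[-\delta,\delta]$ absorbing the Jacobian and the smooth factor $\bigl((u^{-1}(y)-\zeta)/y\bigr)^j$, and $R_1,R_2=\mathcal{O}(\omega^{1/(r+1)})$. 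This reduces matters to showing that the scaled integral is $\mathcal{O}(1)$ uniformly in $\omega$, which is precisely where the exponent $-(j+1)/(r+1)$ is produced.

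For the uniform bound I would split at $|s|=1$. Near the origin the inequality $|J_\nu(\epsilon s^{r+1})|\lesssim|s|^{(r+1)\Re(\nu)}$ combined with $\Re(\nu)>-1/(r+1)\ge-(j+1)/(r+1)$ makes $|s|^{j+(r+1)\Re(\nu)}$ integrable there. For $|s|\ge 1$ I would integrate by parts using the Bessel identity
\[
\frac{d}{ds}\bigl[(\epsilon s^{r+1})^{\nu+1}J_{\nu+1}(\epsilon s^{r+1})\bigr] = (r+1)\epsilon s^r(\epsilon s^{r+1})^{\nu+1}J_\nu(\epsilon s^{r+1}),
\]
so that, together with $|J_{\nu+1}(\epsilon s^{r+1})|=\mathcal{O}(|s|^{-(r+1)/2})$, the boundary term at $|s|\sim\omega^{1/(r+1)}$ is $\mathcal{O}(|s|^{j-r-(r+1)/2})$ and the remainder integrand decays like $|s|^{j-(3r+3)/2}$, both integrable at infinity because $j\le r$ and $r\ge1$. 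The main obstacle will be keeping these bounds uniform in $\omega$: the IBP transfers derivatives onto the product $G(\omega^{-1/(r+1)}s)\,s^j$, so one must verify that the resulting $\omega^{-1/(r+1)}G{'}(\omega^{-1/(r+1)}s)$ correction is harmless, which follows from the uniform bound on $G{'}$ over the fixed interval $[-\delta,\delta]$. Collecting the three estimates yields $I_{\mathrm{near}}=\mathcal{O}(\omega^{-(j+1)/(r+1)})$, and combining with the $I_{\mathrm{far}}$ bound completes the proof.
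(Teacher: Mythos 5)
Your proposal is correct, but it takes a genuinely different route from the paper. The paper splits the integral at $\zeta$, substitutes $y^{r+1}=g(x)$ so each piece becomes $\int \widehat{h}(y)J_{\nu}(\omega y^{r+1})\,dy$ with a smooth amplitude vanishing to order $j$ at the origin, then invokes its own canonical-oscillator expansion (Corollary \ref{cor:AsyStaSim}), keeps the leading terms, and finally estimates the resulting moments $\int_0^{\widehat{b}} y^{\ell}J_{\nu}(\omega y^{r+1})\,dy$ by converting them via $t=y^{r+1}$ to the standard Bessel moments whose closed-form Lommel-function representation gives Lemma \ref{lem:EstimMom}. You instead isolate a fixed neighbourhood of $\zeta$, dispose of the far field with Theorem \ref{thm:AsympCaseOne} (giving $\mathcal{O}(\omega^{-3/2})$, dominated since $(j+1)/(r+1)\leq 1$), and on the near field use the smooth normal form $g=\epsilon u^{r+1}$ together with the rescaling $s=\omega^{1/(r+1)}u(x)$, proving a uniform $\mathcal{O}(1)$ bound for the rescaled integral via the small-argument estimate $|J_{\nu}(z)|\lesssim |z|^{\Re(\nu)}$ (this is exactly where the standing assumption $\Re(\nu)>-\frac{1}{r+1}$ enters) and a single integration by parts based on \eqref{eq:DE} for $|s|\geq1$. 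Your route is self-contained — no Lommel functions and no appeal to the stationary-point expansion of Theorem \ref{thm:AsySta}, which also removes any appearance of circularity when the lemma is later used to truncate that very expansion — and it makes transparent where the exponent $\frac{j+1}{r+1}$ originates; the paper's route, in exchange, recycles machinery already established and in principle yields full asymptotic information rather than only an order bound. Two details worth tightening in your argument: when $\epsilon s^{r+1}<0$ you should first pass to a positive argument via $J_{\nu}(-z)=e^{\nu\pi i}J_{\nu}(z)$, as the paper does in Lemma \ref{lem:MuEstim}, before using the power-weighted antiderivative; and in the borderline case $j=r$, $r=1$ the term in which the derivative falls on $G$ is only $\mathcal{O}(\omega^{-1/(r+1)}\log\omega)$ rather than $\mathcal{O}(\omega^{-1/(r+1)})$, which is still harmless for the uniform bound but deserves an explicit remark.
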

\begin{proof}
From the assumption $g{'}(x)\neq0$ for
$x\in[a,b]\setminus\{\zeta\}$, we see that $g(x)$ is strictly
monotonically on each subinterval $[a,\zeta]$ and $[\zeta,b]$. For
simplicity, we only consider the case where $g(x)$ is strictly
decreasing on $[a,\zeta]$ and is strictly increasing on $[\zeta,b]$,
i.e., $g(a)\geq0$ and $g(b)\geq0$. Splitting the integral
representation of $M_j(\zeta,\nu,\omega)$ at $x=\zeta$ gives
\begin{align}\label{eq:MomStat}
M_j(\zeta,\nu,\omega) &= \int_{a}^{\zeta} (x-\zeta)^j J_{\nu}(\omega
g(x)) dx +
\int_{\zeta}^{b} (x-\zeta)^j J_{\nu}(\omega g(x)) dx \nonumber \\
&= \int_{\sqrt[r+1]{g(a)}}^{0} y^j h(y) J_{\nu}(\omega y^{r+1}) dy +
\int_{0}^{\sqrt[r+1]{g(b)}} y^j h(y) J_{\nu}(\omega y^{r+1}) dy,
\end{align}
where in the last step we have made a change of variable
$y^{r+1}=g(x)$ and
\[
h(y) = (r+1)\frac{(x-\zeta)^j}{g{'}(x)} y^{r-j}.
\]
Note that the assumption on $g(x)$ implies that $h(y)$ is a
$C^{\infty}$ function and $h(0)\neq0$. We now consider the
asymptotic behavior of the second integral on the right-hand side of
\eqref{eq:MomStat}. 
Let us define $\widehat{h}(y) = y^j h(y)$ and $\widehat{b} =
\sqrt[r+1]{g(b)}$. By invoking Corollary \ref{cor:AsyStaSim} and
keeping only the first term of the expansion, we have
\begin{align}\label{eq:GenMomCanEst}
\int_{0}^{\sqrt[r+1]{g(b)}} y^j h(y) J_{\nu}(\omega y^{r+1}) dy &=
\int_{0}^{\widehat{b}} \widehat{h}(y) J_{\nu}(\omega y^{r+1}) dy
\nonumber \\
&\sim \sum_{\ell=0}^{r} \frac{ \widehat{h}^{(\ell)}(0)}{\ell!}
\int_{0}^{\widehat{b}} y^{\ell} J_{\nu}(\omega y^{r+1}) dy \nonumber
\\
&~~~~~ + \frac{1}{\omega} \bigg[ \widehat{h}(\widehat{b}) -
T_{r}[\widehat{h}](\widehat{b},0) \bigg] \frac{J_{\nu+1}(\omega
\widehat{b}^{r+1})}{(r+1) \widehat{b}^r} \nonumber \\
&= \sum_{\ell=j}^{r} \frac{ \widehat{h}^{(\ell)}(0)}{\ell!}
\int_{0}^{\widehat{b}} y^{\ell} J_{\nu}(\omega y^{r+1}) dy \nonumber
\\
&~~~~~ + \frac{1}{\omega} \bigg[ \widehat{h}(\widehat{b}) -
T_{r}[\widehat{h}](\widehat{b},0) \bigg] \frac{J_{\nu+1}(\omega
\widehat{b}^{r+1})}{(r+1) \widehat{b}^r},
\end{align}
where we have used the fact $\widehat{h}^{(\ell)}(0)=0$ for $0\leq
\ell\leq j-1$ in the last step. For the integrals on the right-hand
side of \eqref{eq:GenMomCanEst}, by setting $t = y^{r+1}$, we have
\[
\int_{0}^{\widehat{b}} y^{\ell} J_{\nu}(\omega y^{r+1}) dy =
\frac{1}{r+1} \int_{0}^{\widehat{b}^{r+1}} t^{\frac{\ell+1}{r+1}-1}
J_{\nu}(\omega t) dt.
\]
From Lemma \ref{lem:EstimMom} in the appendix we know that the
integral on the right-hand side behaves like
$\mathcal{O}(\omega^{-\frac{\ell+1}{r+1}})$ for fixed $\nu$ and
$\Re(\nu)>-\frac{1}{r+1}$. Thus, we can deduce that the first term
on the right-hand side of \eqref{eq:GenMomCanEst} behaves like
$\mathcal{O}(\omega^{-\frac{j+1}{r+1}})$. Moreover, it is easy to
see that the second term on the right-hand side of
\eqref{eq:GenMomCanEst} behaves like
$\mathcal{O}(\omega^{-\frac{3}{2}})$. We can conclude that the
second integral on the right-hand side of \eqref{eq:MomStat} behaves
like $\mathcal{O}(\omega^{-\frac{j+1}{r+1}})$.

Finally, using a similar argument, we can show that the first
integral on the right-hand side of \eqref{eq:MomStat} also behaves
like $\mathcal{O}(\omega^{-\frac{j+1}{r+1}})$. This completes the
proof.
\end{proof}

\begin{theorem}\label{thm:AsyErrorSta}
Under the same assumptions as in Theorem \ref{thm:EndAsySta}, we
have
\begin{align}
\mathcal{H}_{\nu}[f] - Q_m^{A}[f] =
\mathcal{O}(\omega^{-m-\frac{1}{r+1}}), \quad
\omega\rightarrow\infty,
\end{align}
where $Q_{m}^{A}[f]$ is defined as in \eqref{def:EndAsmMethII}.
\end{theorem}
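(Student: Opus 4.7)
The plan is to mimic the two-sum error analysis used in Theorem \ref{thm:AsymErrorZero}, applying it to the tail of the expansion in Theorem \ref{thm:EndAsySta}. First I would subtract the truncation $Q_m^A[f]$ defined in \eqref{def:EndAsmMethII} from the full asymptotic expansion \eqref{eq:EndAsySt2}, obtaining
\begin{align*}
\mathcal{H}_{\nu}[f] - Q_m^{A}[f] &\sim \sum_{k=m}^{\infty} \frac{1}{(-\omega)^{k}} \sum_{j=0}^{r} \frac{\widehat{\sigma}_{k}[f]^{(j)}(a)}{j!} M_j(a,\nu+k,\omega) \\
&\quad - \sum_{k=m+1}^{\infty} \frac{1}{(-\omega)^{k}} \bigg[ \frac{\widehat{\sigma}_{k-1}[f](b) - T_r[\widehat{\sigma}_{k-1}](b,a)}{g{'}(b)} \bigg] J_{\nu+k}(\omega g(b)).
\end{align*}

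Next I would estimate each sum separately. For the first sum, Lemma \ref{lem:EstMomSta} gives $M_j(a,\nu+k,\omega) = \mathcal{O}(\omega^{-(j+1)/(r+1)})$, so the $(k,j)$-th term is of order $\omega^{-k-(j+1)/(r+1)}$. The slowest decay occurs at $k=m$, $j=0$, which yields $\mathcal{O}(\omega^{-m-1/(r+1)})$. For the second sum, the classical Bessel estimate $J_{\nu+k}(z) = \mathcal{O}(z^{-1/2})$ as $z\to\infty$ \cite[Eqn.~10.17.2]{olver2010nist} shows that the leading term ($k=m+1$) is of order $\omega^{-m-3/2}$. Since $r\geq 1$ gives $1/(r+1) \leq 1/2 < 3/2$, the first sum dominates and the combined estimate is $\mathcal{O}(\omega^{-m-1/(r+1)})$, as claimed.

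I expect the bookkeeping of the two sums to be entirely routine. The only subtle point is the comparison of decay rates: one must observe that the moment bound $M_j(a,\nu+k,\omega) = \mathcal{O}(\omega^{-(j+1)/(r+1)})$ is the bottleneck — it decays more slowly than the Bessel contribution $\omega^{-1/2}$ whenever $r\geq 1$. This means the stationary point, rather than the endpoint $b$, drives the asymptotic rate, which is the intuitive reason the exponent involves $r+1$. Because Lemma \ref{lem:EstMomSta} is available and $\widehat{\sigma}_k[f]$ is smooth with $f,g\in C^\infty[a,b]$, the resulting order of magnitude bound holds uniformly for the finitely many terms of each fixed order, and the formal asymptotic identity can be truncated after finitely many terms to make the $\sim$ rigorous via the same iterative integration-by-parts identity used to establish \eqref{eq:EndAsySt2}.
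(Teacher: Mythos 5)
Your proposal is correct and follows essentially the same route as the paper, whose proof is simply the one-line statement that the result follows by combining Theorem \ref{thm:EndAsySta} with Lemma \ref{lem:EstMomSta}; you have merely written out the routine bookkeeping (tail of the first sum dominated by the $k=m$, $j=0$ term of order $\omega^{-m-\frac{1}{r+1}}$, tail of the boundary sum of order $\omega^{-m-\frac{3}{2}}$) that the paper leaves implicit. The comparison $\frac{1}{r+1}\leq\frac{1}{2}<\frac{3}{2}$ and the resulting dominance of the stationary-point contribution are exactly the intended argument.
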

\begin{proof}
It follows by combining Theorem \ref{thm:EndAsySta} and Theorem
\ref{lem:EstMomSta}.
\end{proof}

Consider an example with $f(x)= e^x$, $g(x)=x^2$, $\nu=2$ and
$[a,b]=[0,1]$. This example has a stationary point of order $r=1$ at
$\zeta=0$. In our computations, we rewrite $M_j(\zeta,\nu+k,\omega)$
as
\begin{align}
M_j(\zeta,\nu+k,\omega) &= \int_{0}^{1} x^j J_{\nu+k}(\omega x^2) dx
= \frac{1}{2} \int_{0}^{1} y^{\frac{j-1}{2}} J_{\nu+k}(\omega y) dy,
\nonumber
\end{align}
and then the last integral is calculated by \eqref{eq:MomLom} in the
appendix. In Figure \ref{fig:AsyStationary} we display the error of
$Q_{m}^{A}[f]$ in the left panel and the error scaled by
$\omega^{m+\frac{1}{2}}$ in the right panel. As expected, the
accuracy of $Q_{m}^{A}[f]$ improves as $\omega$ increases and the
decay rate of the error of $Q_{m}^{A}[f]$ is
$\mathcal{O}(\omega^{-m-\frac{1}{2}})$.

\begin{figure}[ht]
\centering
\includegraphics[width=7.2cm,height=6cm]{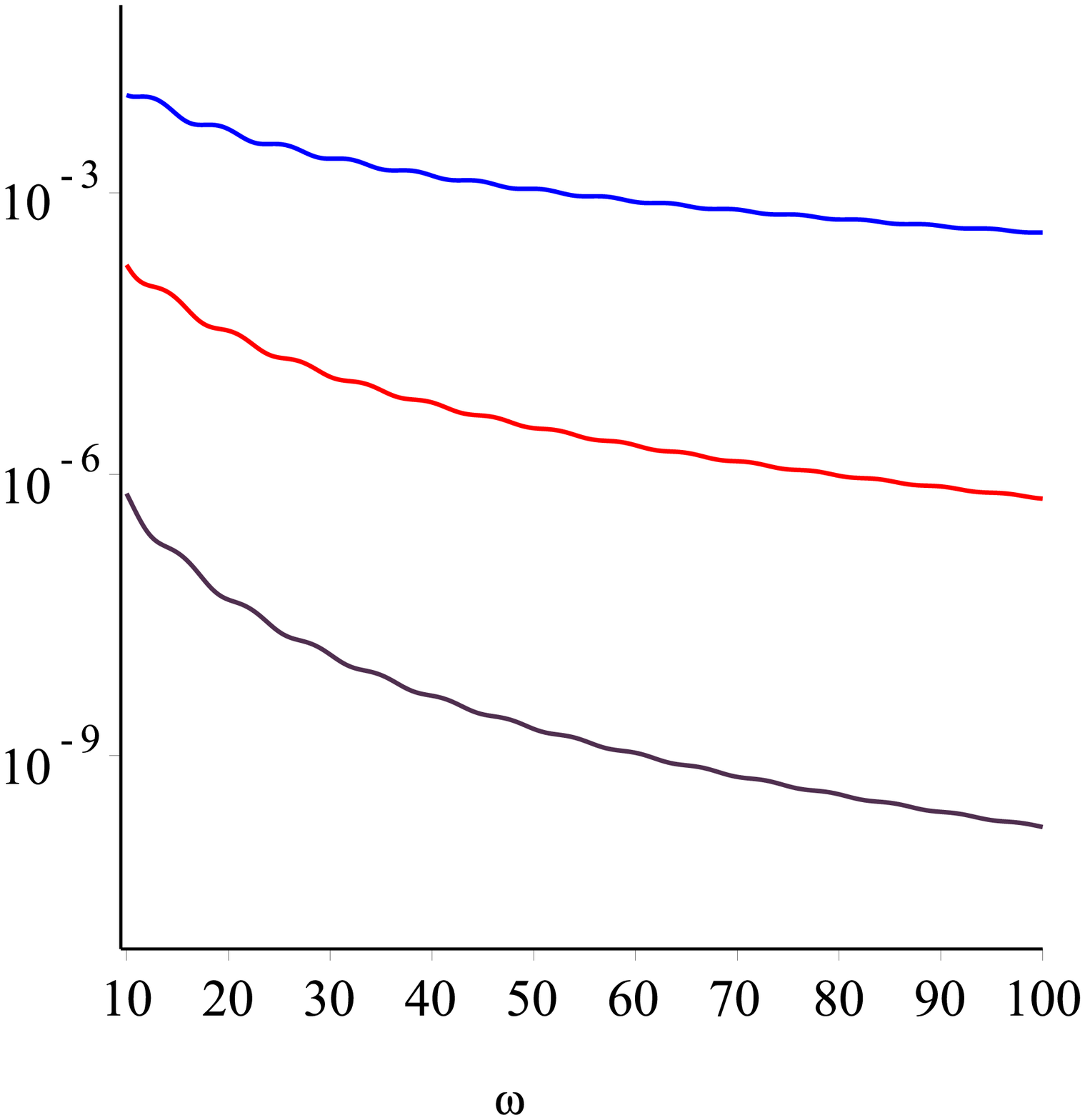}~
\includegraphics[width=7.2cm,height=6cm]{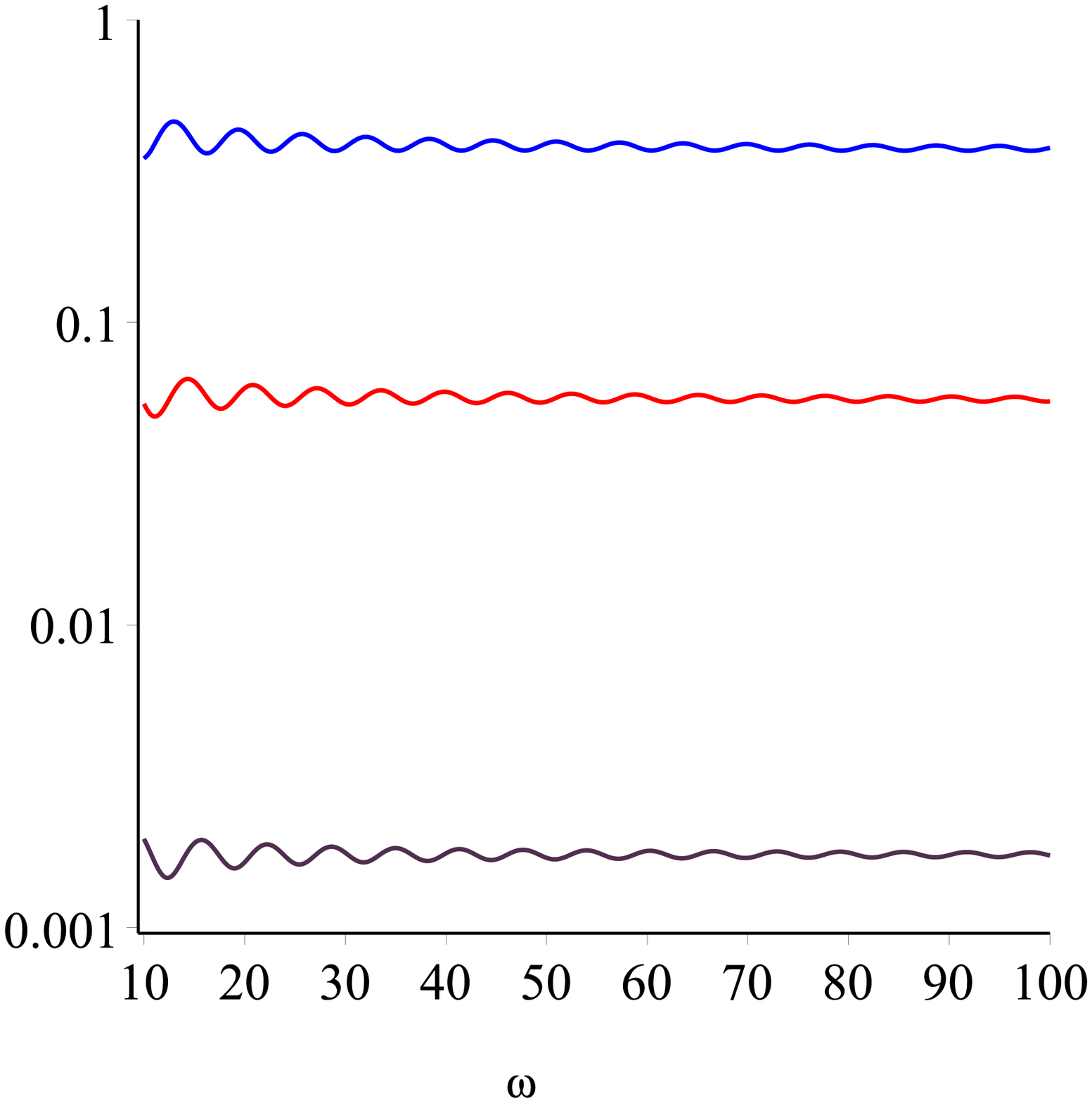}
\caption{Absolute error of $Q_m^{A}[f]$ (left) and the error scaled
by $\omega^{m+\frac{1}{2}}$ (right). 
The blue, red, violet curves correspond to $m=1,2,3$, respectively.}
\label{fig:AsyStationary}
\end{figure}

Now, we consider to construct a Filon-type method for the transform
in the presence of a stationary point at the left endpoint. To
achieve this, we introduce an auxiliary result.
\begin{lemma}
Assume that $g(x)\in C^{\infty}[a,b]$ and it has a stationary point
of type II and of order $r$ at $x=a$ and $g^{(r+1)}(x)\neq0$ for
$x\in[a,b]$. Let
\begin{align}
\widehat{\varphi}_k(x) = \left\{\begin{array}{cc}
                          {\displaystyle g{'}(x) g(x)^{\frac{k-r}{r+1}}},  &\mbox{if $g^{(r+1)}(x)>0$}, \\ [12pt]
                          {\displaystyle g{'}(x) (-g(x))^{\frac{k-r}{r+1}}},  &\mbox{if $g^{(r+1)}(x)<0$}.
                                        \end{array}
                                        \right. \nonumber
\end{align}
and set
\begin{align}
\widehat{\mathcal{E}} = \left\{ v(x)~\bigg|~ v(x) = \sum_{k=0}^{n-1}
c_k \widehat{\varphi}_k(x), ~~ c_k\in \mathbb{R} \right\}. \nonumber
\end{align}
Then, $\widehat{\mathcal{E}}$ forms an extended Chebyshev space.
\end{lemma}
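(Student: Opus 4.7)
The plan is to mirror the strategy of Lemma~\ref{lem:space}, but with a change of variable that \emph{unfolds} the stationary point at $x=a$. In Lemma~\ref{lem:space} the substitution $y=g(x)$ sufficed because $g'(x)\neq 0$; here $g'(a)=0$, so a plain substitution fails. Instead, I would introduce $y=y(x)$ defined by $y^{r+1}=g(x)$ when $g^{(r+1)}>0$ and by $y^{r+1}=-g(x)$ when $g^{(r+1)}<0$. After this, the proof should reduce to an argument completely parallel to that of the previous lemma.

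The first step is to show that $y(x)$ is a genuine $C^{\infty}$ diffeomorphism of $[a,b]$ onto its image. Using the type~II, order-$r$ hypothesis at $x=a$ and $g^{(r+1)}(x)\neq0$ on $[a,b]$, Taylor's theorem gives the factorization $g(x)=(x-a)^{r+1}\tilde g(x)$ with $\tilde g\in C^{\infty}[a,b]$ and $\tilde g(x)$ of constant nonzero sign on $[a,b]$ (assume $\tilde g>0$, corresponding to $g^{(r+1)}>0$, the other case being identical). Then $y(x)=(x-a)\tilde g(x)^{1/(r+1)}$, which is manifestly $C^{\infty}$ on $[a,b]$, with $y(a)=0$ and $y'(a)=\tilde g(a)^{1/(r+1)}>0$. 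For $x>a$ one has $g'(x)=(r+1)y(x)^r y'(x)$, and since $g'(x)\neq0$ and $y(x)>0$ on $(a,b]$, this forces $y'(x)\neq0$ throughout $[a,b]$.

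The second step rewrites the generators. From $g'(x)=(r+1)y(x)^r y'(x)$ and $g(x)^{(k-r)/(r+1)}=y(x)^{k-r}$ (taking principal branches for $y>0$, with the obvious sign modification in the other case), one obtains
\[
\widehat{\varphi}_k(x)=(r+1)\,y'(x)\,y(x)^k,\qquad k=0,1,\ldots,n-1.
\]
This representation immediately establishes that each $\widehat{\varphi}_k$ lies in $C^{\infty}[a,b]$ despite the apparent fractional power, because the zero of $g'(x)$ at $x=a$ exactly cancels the singularity of $g(x)^{(k-r)/(r+1)}$ there. Moreover, any $v\in\widehat{\mathcal{E}}$ takes the form $v(x)=(r+1)\,y'(x)\,Q(y(x))$ with $Q(y)=\sum_{k=0}^{n-1}c_k y^k$ a polynomial of degree at most $n-1$. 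Since $y'(x)\neq0$ on $[a,b]$, the identity $v\equiv0$ forces $Q\equiv0$ and hence $c_0=\cdots=c_{n-1}=0$, so $\{\widehat{\varphi}_k\}$ is a basis of $\widehat{\mathcal{E}}$; and because $y$ is a $C^{\infty}$ diffeomorphism, zeros of $v$ in $[a,b]$ correspond bijectively, with matching multiplicities, to zeros of $Q$ in $y([a,b])$, giving at most $n-1$ of them counted with multiplicity. Schumaker's Theorem~2.33 then yields the claim.

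The main obstacle I would expect is the first step: verifying that the substitution $y^{r+1}=\pm g(x)$ is globally smooth on $[a,b]$ with nonvanishing derivative, despite the stationary point. This is precisely where the hypothesis $g^{(r+1)}(x)\neq0$ on all of $[a,b]$ (rather than merely at $x=a$) is crucial, since it guarantees the factor $\tilde g$ has constant sign and admits a smooth $(r+1)$-st root. Once this is in hand, the remaining linear-algebra and zero-counting arguments are essentially identical to Lemma~\ref{lem:space}.
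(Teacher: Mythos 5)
Your proposal is correct and follows essentially the same route as the paper: the substitution $y^{r+1}=\pm g(x)$, the factorization $v(x)=\Lambda(x)\sum_k c_k y^k$ with $\Lambda(x)=g'(x)g(x)^{-r/(r+1)}=(r+1)y'(x)$ smooth, nonvanishing after removing the singularity at $x=a$, and then the basis/zero-counting argument of Lemma~\ref{lem:space} together with Schumaker's Theorem~2.33. Your explicit Taylor-factorization check that $y(x)$ is a smooth change of variable with $y'\neq0$ is a slightly more detailed justification of a step the paper states without proof.
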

\begin{proof}
Here we only consider the case of $g^{(r+1)}(x)>0$ for $x\in[a,b]$
and the other case $g^{(r+1)}(x)<0$ can be handled in a similar
manner. With this condition, we can deduce that $g^{(j)}(x)>0$ for
$x\in(a,b]$ and $j=1,\ldots,r$ and thus $g(x)$ is strictly
increasing in $[a,b]$. Setting $y^{r+1}=g(x)$, we have
\[
v(x) = \sum_{k=0}^{n-1} c_k \widehat{\varphi}_k(x) = \Lambda(x)
\sum_{k=0}^{n-1} c_k y^k,
\]
where $\Lambda(x) = g{'}(x) g(x)^{-\frac{r}{r+1}}$. Note that
$\Lambda(x)$ have a removable singularity at $x=a$ and we can easily
check that $\Lambda(a) = \lim_{x\rightarrow a}\Lambda(x) = (r+1)
\left[g^{(r+1)}(a)/(r+1)!\right]^{\frac{1}{r+1}}
>0$. Thus, we can conclude that $\Lambda(x)\in C^{\infty}[a,b]$ and
$\Lambda(x)>0$ for $x\in[a,b]$. The remaining details of the proof
is similar to that of Lemma \ref{lem:space} and hence are omitted.
\end{proof}

Let $a=x_0<x_1<\cdots<x_d=b$ be a set of distinct nodes with
multiplicities $m_0,m_1,\ldots,m_d$ and $\sum_{k=0}^{d} m_k = n$.
Suppose that $\widehat{p}(x) = \sum_{k=0}^{n-1} c_k
\widehat{\varphi}_k(x)\in \widehat{\mathcal{E}}$ is the unique
function which satisfies $\widehat{p}^{(j-1)}(x_k) = f^{(j-1)}(x_k)
$ for $j=1,\ldots,m_k$ and $k=0,\ldots,d$. The modified Filon-type
method is then defined by
\begin{align}\label{eq:ModFilonSta}
Q^{F}[f] := \mathcal{H}_{\nu}[\widehat{p}] = \sum_{k=0}^{n-1} c_k
\widehat{\mu}_{k,\nu}^{\omega}.
\end{align}
where $\widehat{\mu}_{k,\nu}^{\omega} =
\mathcal{H}_{\nu}[\widehat{\varphi}_k]$.

\begin{theorem}
Under the same assumptions as in Theorem \ref{thm:EndAsySta}, and
assume that $g^{(r+1)}(x)\neq0$ for $x\in[a,b]$. Let $m_d\geq m$ and
$m_0\geq m(r+1)$, then 
\begin{align}
\mathcal{H}_{\nu}[f] - Q^{F}[f] =
\mathcal{O}(\omega^{-m-\frac{1}{r+1}}).
\end{align}
\end{theorem}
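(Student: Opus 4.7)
The plan is to mirror the Filon-type error argument given earlier in Section~\ref{sec:FreeStationary}, but now using the stationary-point expansion Theorem~\ref{thm:EndAsySta} in place of Theorem~\ref{thm:AsympCaseOne}. Set $h(x)=f(x)-\widehat{p}(x)$. Since $\widehat{p}\in\widehat{\mathcal{E}}\subset C^{\infty}[a,b]$ (as verified in the extended Chebyshev lemma just proved), we have $h\in C^{\infty}[a,b]$ and $\mathcal{H}_{\nu}[f]-Q^{F}[f]=\mathcal{H}_{\nu}[h]$. Applying Theorem~\ref{thm:EndAsySta} to $h$ yields an asymptotic series that naturally splits into a ``stationary'' sum with terms $\widehat{\sigma}_{k}[h]^{(j)}(a)\,M_{j}(a,\nu+k,\omega)$ and a ``boundary'' sum with terms $\widehat{\sigma}_{k-1}[h](b)-T_{r}[\widehat{\sigma}_{k-1}[h]](b,a)$ multiplied by $J_{\nu+k}(\omega g(b))$.

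The strategy is to force every term of this expansion with $k\le m-1$ (in the stationary sum) or $k\le m$ (in the boundary sum) to vanish, and then to estimate the tail. The endpoint condition $m_{d}\ge m$ gives $h^{(j)}(b)=0$ for $j=0,\dots,m-1$; since $\widehat{\sigma}_{k-1}[h](b)$ is a linear combination of $h(b),h'(b),\dots,h^{(k-1)}(b)$ (this is the $x\neq\zeta$ case of the dependence discussion in Section~3.1), this makes $\widehat{\sigma}_{k-1}[h](b)=0$ for $k=1,\dots,m$. The stationary side is the more delicate part: the crucial fact, which is Theorem~\ref{thm:DepRelSigma} of the appendix, is that $\widehat{\sigma}_{k}[h]^{(j)}(a)$ is a linear combination of the derivatives $h^{(\ell)}(a)$ with $r+1+j\le\ell\le k(r+1)+j$. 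The condition $m_{0}\ge m(r+1)$ produces $h^{(\ell)}(a)=0$ for all $\ell=0,\dots,m(r+1)-1$, which is exactly the range needed to kill $\widehat{\sigma}_{k}[h]^{(j)}(a)$ for every $k=0,\dots,m-1$ and $j=0,\dots,r$. This simultaneously annihilates the stationary-sum coefficients up to $k=m-1$ and the Taylor polynomial $T_{r}[\widehat{\sigma}_{k-1}[h]](b,a)$ appearing in the boundary terms up to $k=m$.

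For the tail, the first surviving boundary term occurs at $k=m+1$, contributing $\omega^{-(m+1)}\cdot\mathcal{O}(\omega^{-1/2})=\mathcal{O}(\omega^{-m-3/2})$ via $J_{\nu+k}(\omega g(b))=\mathcal{O}(\omega^{-1/2})$. The first surviving stationary term occurs at $k=m$, and Lemma~\ref{lem:EstMomSta} gives $M_{j}(a,\nu+m,\omega)=\mathcal{O}(\omega^{-(j+1)/(r+1)})$, with the $j=0$ term dominating, producing $\mathcal{O}(\omega^{-m-1/(r+1)})$. Since $1/(r+1)\le 1<3/2$, the stationary contribution dominates, giving the claimed bound. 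The only non-routine ingredient is the dependence relation $\widehat{\sigma}_{k}[h]^{(j)}(a)\in\text{span}\{h^{(r+1+j)}(a),\dots,h^{(k(r+1)+j)}(a)\}$; this is the main technical lemma and is precisely what forces the asymmetric interpolation requirement $m_{0}\ge m(r+1)$ versus $m_{d}\ge m$ to be the sharp condition for an $\mathcal{O}(\omega^{-m-1/(r+1)})$ error.
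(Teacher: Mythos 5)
Your proposal is correct and follows essentially the same route as the paper: set $h=f-\widehat{p}$, use the interpolation conditions together with the dependence relations (Theorem~\ref{thm:DepRelSigma} at $x=a$, the linear-combination property at $x=b$) to annihilate $\widehat{\sigma}_{k}[h]^{(j)}(a)$ and $\widehat{\sigma}_{k}[h](b)$ for $k=0,\dots,m-1$, $j=0,\dots,r$, and then bound the tail. The only cosmetic difference is that you estimate the tail directly from Theorem~\ref{thm:EndAsySta} and Lemma~\ref{lem:EstMomSta}, whereas the paper compresses this step by citing Theorem~\ref{thm:AsyErrorSta}, whose proof is exactly that combination.
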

\begin{proof}
Let $h(x) = f(x) - \widehat{p}(x) $ and we have
$\mathcal{H}_{\nu}[f]-Q^{F}[f] = \mathcal{H}_{\nu}[h]$. By using the
assumptions $m_d\geq m$ and $m_0\geq m(r+1)$ and the dependence
relation of $\sigma_k[f](x)$ on $f(x)$ and its derivatives, we have
for $k=0,\ldots,m-1$ that
\begin{align*}
\sigma_{k}[h](b) = \sigma_{k}[h]^{(j)}(a) = 0,
\end{align*}
where $j=0,\ldots,r$. The desired results follow from Theorem
\ref{thm:AsyErrorSta}.
\end{proof}

\begin{figure}[ht]
\centering
\includegraphics[width=7.2cm,height=6cm]{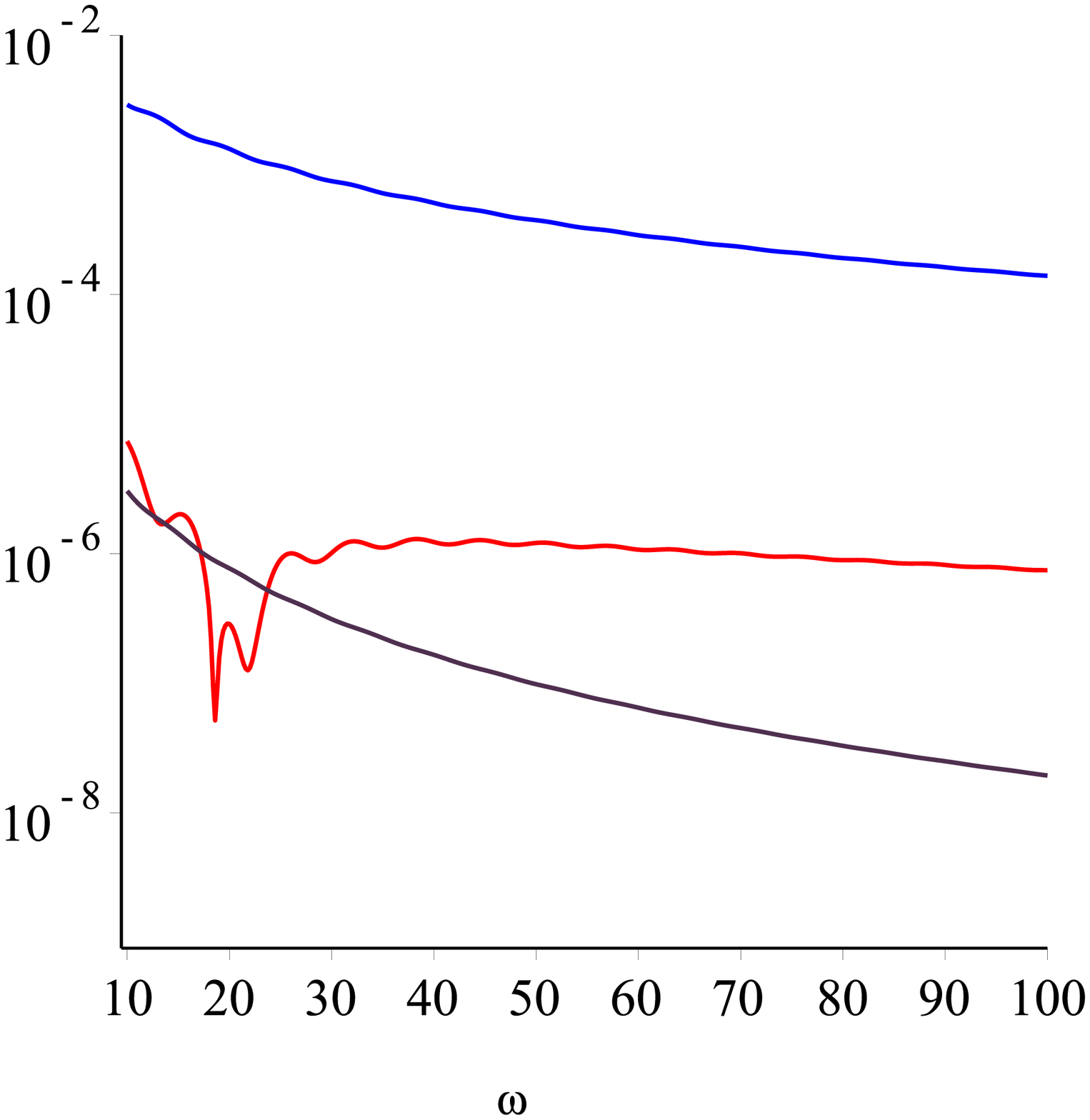}~
\includegraphics[width=7.2cm,height=6cm]{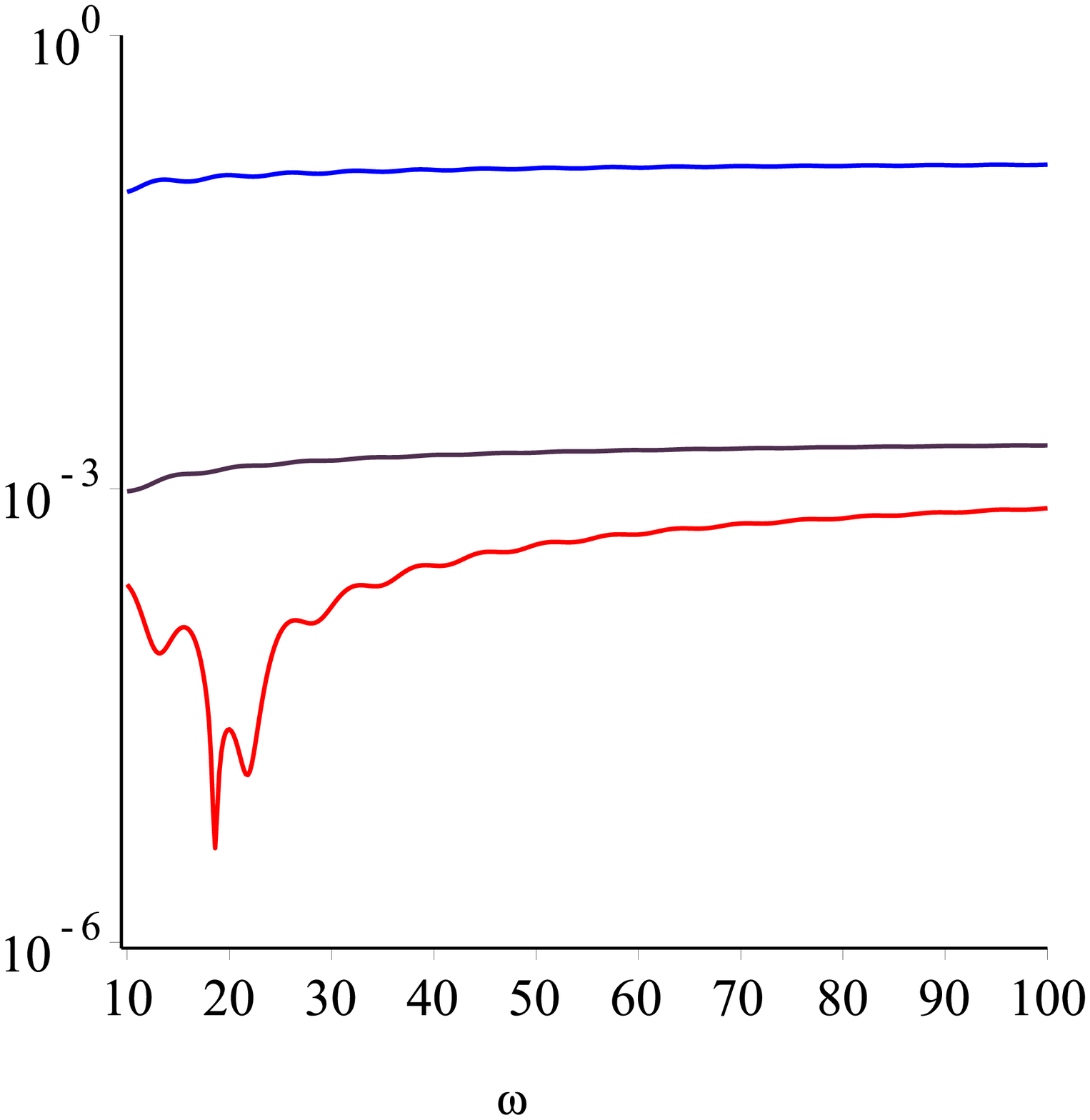}
\caption{The left panel shows the error of $Q^{F}[f]$ with nodes
$\{0,1\}$ and multiplicities $\{2,1\}$ (blue), $Q^{F}[f]$ with nodes
$\{0,\frac{1}{3},\frac{2}{3},1\}$ and multiplicities $\{2,1,1,1\}$
(red) and $Q^{F}[f]$ with nodes $\{0,1\}$ and multiplicities
$\{4,2\}$ (violet). The right panel shows the error of the first two
methods scaled by $\omega^{\frac{3}{2}}$ and the error of the last
method scaled by $\omega^{\frac{5}{2}}$.}
\label{fig:FilonStationary}
\end{figure}

We reconsider the example used in Figure \ref{fig:AsyStationary}. We
compare the accuracy of three modified Filon-type methods:
$Q^{F}[f]$ with nodes $\{0,1\}$ and multiplicities $\{2,1\}$,
$Q^{F}[f]$ with nodes $\{0,\frac{1}{3},\frac{2}{3},1\}$ and
multiplicities $\{2,1,1,1\}$ and $Q^{F}[f]$ with nodes $\{0,1\}$ and
multiplicities $\{4,2\}$. It is easy to see that the first two
Filon-type methods correspond to $m=1$ and therefore the decay rate
of their error is $\mathcal{O}(\omega^{-\frac{3}{2}})$. For the last
one, it corresponds to $m=2$ and therefore the decay rate of its
error is $\mathcal{O}(\omega^{-\frac{5}{2}})$. In our computations,
the moments $\{\widehat{\mu}_{k,\nu}^{\omega}\}_{k=0}^{n-1}$ are
evaluated by using Theorem \ref{thm:ModMomStatII} in the appendix.
Numerical results are presented in Figure \ref{fig:FilonStationary}.

\section{Extension}\label{sec:MultiHankel}
In this section we present several extensions of our results which
might be interesting in applications.
\subsection{Fourier-Bessel series}
The Fourier-Bessel series of a function $f: [0,b]\rightarrow
\mathbb{R}$ is defined by
\begin{align}
f(x) \sim \sum_{k=1}^{\infty} a_k J_{\nu}\left(\frac{j_{\nu,k}}{b}
x\right),
\end{align}
where $j_{\nu,k}$ is the $k$th positive root of $J_{\nu}(x)$ and the
coefficients are defined by
\begin{align}
a_k = \frac{2}{ [b J_{\nu+1}(j_{\nu,k})]^2} \int_{0}^{b} x f(x)
J_{\nu}\left(\frac{j_{\nu,k}}{b} x\right) dx.
\end{align}
This series is widely used in many areas such as the solution to
partial differential equations in cylindrical coordinate systems
(see \cite[Chap.~11]{poularikas2010transforms}). We observe that the
coefficients $\{ a_k \}_{k\geq0}$ are finite Hankel transforms and
therefore our results can be extended to study the convergence rate
and evaluation of the Fourier-Bessel series.

\subsection{Airy transform}
Consider the following integral
\begin{align}\label{eq:Airy}
I_{A}[f] = \int_{a}^{b} f(x) \mathrm{Ai}(-\omega x) dx,
\end{align}
where $\mathrm{Ai}(x)$ is the Airy function of the first kind and
$b>a\geq0$. A vector-valued asymptotic expansion of the integral
\eqref{eq:Airy} was presented in \cite{olver2007numerical} under the
condition that $a>0$. In the case of $a=0$, however, the
vector-valued expansion is no longer valid. In the following, we
will restrict our attention to the case $a=0$ and show that the
analysis presented in section \ref{sec:Stationary} can be easily
extended to deal with this problem.

By making use of the identity \cite[Eqn. 9.6.6]{olver2010nist}
\begin{align}
\mathrm{Ai}(-x) &= \frac{\sqrt{x}}{3} \left[
J_{\frac{1}{3}}\left(\frac{2}{3} x^{\frac{3}{2}} \right) +
J_{-\frac{1}{3}}\left(\frac{2}{3} x^{\frac{3}{2}} \right) \right],
\end{align}
it follows
\begin{align}
I_{A}[f] = \frac{\sqrt{\omega}}{3} \left[ \int_{0}^{b} \sqrt{x} f(x)
J_{\frac{1}{3}}\left(\frac{2}{3}\omega^{\frac{3}{2}} x^{\frac{3}{2}}
\right) dx + \int_{0}^{b} \sqrt{x} f(x)
J_{-\frac{1}{3}}\left(\frac{2}{3}\omega^{\frac{3}{2}}
x^{\frac{3}{2}} \right) dx  \right].  \nonumber
\end{align}
By setting $x=t^2$ and $\widehat{\omega} =
\frac{2}{3}\omega^{\frac{3}{2}}$, we further arrive at
\begin{align}\label{eq:AiryTran}
I_{A}[f] = \frac{2\sqrt{\omega}}{3} \left[ \int_{0}^{\sqrt{b}} t^2
f(t^2) J_{\frac{1}{3}}\left( \widehat{\omega} t^3 \right) dt +
\int_{0}^{\sqrt{b}} t^2 f(t^2)
J_{-\frac{1}{3}}\left(\widehat{\omega} t^3 \right) dt \right].
\end{align}
Clearly, we see that both integrals on the right-hand side of
\eqref{eq:AiryTran} are finite Hankel transforms involving the
oscillator $g(t) = t^3$. Therefore, the result of Corollary
\ref{cor:AsyStaSim} can be used directly to obtain an asymptotic
expansion of $I_{A}[f]$.

\subsection{Multivariate Hankel transform}
Let $\Omega$ be a connected and bounded region, we consider
\begin{align}\label{eq:MultiHankel}
\mathcal{H}_{\nu}[f] = \int_{\Omega} f(\vec{x}) J_{\nu}(\omega
g(\vec{x})) dV,
\end{align}
where $f,g:\mathbb{R}^{d}\rightarrow\mathbb{R}$ be sufficiently
smooth functions. Integrals of this form may arise in some
applications such as the crystallography problem
\cite{mcclure1990asymptotic}.

Asymptotic analysis of the multivariate Hankel transform is much
more involved than the univariate case. Generally speaking, the
asymptotic behavior of the transform depends not only on the zeros
and stationary points, i.e., $\vec{x}$ where $g(\vec{x})=0$ and
$\nabla g(\mathbf{x})=0$ respectively, but also points of resonance,
i.e., $\vec{x}$ where $\nabla g(\mathbf{x})$ is orthogonal to the
boundary. In the following we restrict our attention to the ideal
case where the oscillator $g(\vec{x})$ has no zeros, stationary
points and resonance points for all $\vec{x}\in\Omega$.
\begin{theorem}\label{thm:MultiAsyExp}
Assume that $g(\mathbf{x})$ has no zero, stationary and resonance
points for all $\vec{x}\in\Omega$. Then, for
$\omega\rightarrow\infty$, it is true that
\begin{align}\label{eq:MultiAsyExp}
\mathcal{H}_{\nu}[f] \sim - \sum_{k=1}^{\infty}
\frac{1}{(-\omega)^{k}} \int_{\partial\Omega} \frac{\nabla
g(\mathbf{x})\cdot \vec{n}}{|\nabla g(\mathbf{x})|^2}
\sigma_{k-1}(\mathbf{x}) J_{\nu+k}(\omega g(\mathbf{x})) dS,
\end{align}
where $\vec{n}$ is the unit outward normal to $\partial \Omega$ and
$\sigma_k(\mathbf{x})$ are defined by
\begin{align}
\sigma_0(\mathbf{x}) &= f(\mathbf{x}), \nonumber \\
\sigma_k(\mathbf{x}) &= g(\mathbf{x})^{\nu+k}  \nabla \cdot \left[
\frac{\nabla g(\mathbf{x})}{|\nabla g(\mathbf{x})|^2
g(\mathbf{x})^{\nu+k}} \sigma_{k-1}(\mathbf{x}) \right] , \quad
k\geq 1.
\end{align}
\end{theorem}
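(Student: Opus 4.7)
The plan is to prove, by induction on $m \ge 0$, the finite identity
\[
\mathcal{H}_{\nu}[f] = -\sum_{k=1}^{m} \frac{1}{(-\omega)^{k}} \int_{\partial\Omega} \frac{\nabla g(\mathbf{x})\cdot \vec{n}}{|\nabla g(\mathbf{x})|^2}\, \sigma_{k-1}(\mathbf{x})\, J_{\nu+k}(\omega g(\mathbf{x}))\, dS + \frac{1}{(-\omega)^{m}}\, \mathcal{H}_{\nu+m}[\sigma_m],
\]
and then let $m \to \infty$, in direct analogy with the proof of Theorem~\ref{thm:AsympCaseOne}. The case $m=0$ is trivial. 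The key input for the inductive step is the vector analogue of \eqref{eq:DerivOscillator}: applying \eqref{eq:DE} with $z = \omega g(\mathbf{x})$ together with the chain rule yields
\[
\nabla\!\left[ g(\mathbf{x})^{\nu+m+1} J_{\nu+m+1}(\omega g(\mathbf{x}))\right] = \omega\, g(\mathbf{x})^{\nu+m+1} J_{\nu+m}(\omega g(\mathbf{x}))\, \nabla g(\mathbf{x}).
\]
Since $g\neq 0$ and $|\nabla g|^2\neq 0$ on $\overline{\Omega}$, I can solve for $J_{\nu+m}(\omega g)$ by taking the inner product of both sides with $\nabla g/(|\nabla g|^2 g^{\nu+m+1})$.

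For the inductive step, I would substitute this expression into $\mathcal{H}_{\nu+m}[\sigma_m]$ and invoke the product rule $\mathbf{u}\cdot\nabla w = \nabla\!\cdot(w\mathbf{u}) - w\,\nabla\!\cdot\mathbf{u}$ with $w = g^{\nu+m+1}J_{\nu+m+1}(\omega g)$ and $\mathbf{u} = \sigma_m\,\nabla g/(|\nabla g|^2 g^{\nu+m+1})$. The divergence theorem converts the $\nabla\!\cdot(w\mathbf{u})$ piece into the boundary integral $\int_{\partial\Omega} \tfrac{\nabla g\cdot\vec{n}}{|\nabla g|^2}\, \sigma_m\, J_{\nu+m+1}(\omega g)\, dS$, while the $-w\,\nabla\!\cdot\mathbf{u}$ piece regroups, via the definition of $\sigma_{m+1}$, into $-\mathcal{H}_{\nu+m+1}[\sigma_{m+1}]$. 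Dividing by $\omega$ then produces the one-step recurrence that advances the induction from $m$ to $m+1$.

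Letting $m\to\infty$ yields the claimed asymptotic series, provided the remainder $(-\omega)^{-m}\mathcal{H}_{\nu+m}[\sigma_m]$ is of smaller order than every retained term. Each $\sigma_m$ is smooth on $\overline\Omega$ under the no-zero and no-stationary-point hypotheses, so applying the one-step identity once more to $\mathcal{H}_{\nu+m}[\sigma_m]$ and invoking $J_{\nu+m+1}(z)=\mathcal{O}(z^{-1/2})$ on the resulting surface integral gives the required decay. The main obstacle I anticipate is precisely this final decay step: one needs $g$ restricted to $\partial\Omega$ to be free of stationary points, i.e.\ $\nabla g$ is nowhere tangent to $\partial\Omega$. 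This is exactly the no-resonance hypothesis; without it the boundary integral would itself harbour a stationary phase of $g|_{\partial\Omega}$ and decay more slowly than $\omega^{-1/2}$, causing the asymptotic ordering of the series to break down.
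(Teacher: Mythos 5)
Your proposal follows essentially the same route as the paper: the same use of \eqref{eq:DE} via the chain rule to get $\nabla\bigl[g^{\nu+m+1}J_{\nu+m+1}(\omega g)\bigr]=\omega g^{\nu+m+1}J_{\nu+m}(\omega g)\nabla g$, the same product-rule/divergence-theorem step with $\mathbf{u}=\sigma_m\nabla g/(|\nabla g|^2 g^{\nu+m+1})$ and $w=g^{\nu+m+1}J_{\nu+m+1}(\omega g)$, and the same iteration of the one-step identity followed by letting $m\to\infty$. Your closing remark on why the no-resonance hypothesis guarantees the $\mathcal{O}(\omega^{-1/2})$ decay of the boundary integrals is a sensible addition, but it does not change the argument, which is the paper's proof.
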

\begin{proof}
We first define
\begin{align}
\mathbf{F}(\mathbf{x}) = \frac{\nabla g(\mathbf{x})}{|\nabla
g(\mathbf{x})|^2 g(\mathbf{x})^{\nu+m+1}} \sigma_{m}(\mathbf{x}),
\quad \Phi(\mathbf{x}) =  g(\mathbf{x})^{\nu+m+1} J_{\nu+m+1}(\omega
g(\mathbf{x})), \nonumber
\end{align}
where $m\geq0$. Clearly, $\mathbf{F(x)}$ is a differentiable vector
function and $\Phi(\mathbf{x})$ is a differentiable scalar function.
With the help of the divergence operator
\cite[p.~1051]{gradshteyn2007table} and \eqref{eq:DE}, we have
\begin{align}
\nabla \cdot [\mathbf{F(x)} \Phi(\mathbf{x})] & = \Phi(\mathbf{x})
(\nabla \cdot \mathbf{F(x)}) + \mathbf{F(x)} \cdot \nabla
\Phi(\mathbf{x}) \nonumber \\
& = \Phi(\mathbf{x}) (\nabla \cdot \mathbf{F(x)}) + \omega
\sigma_m(\mathbf{x}) J_{\nu+m}(\omega g(\mathbf{x})), \nonumber
\end{align}
and therefore
\begin{align}
\int_{\Omega} \sigma_{m}(\mathbf{x}) J_{\nu+m}(\omega g(\mathbf{x}))
dV &= \frac{1}{\omega} \int_{\Omega} \nabla \cdot [\mathbf{F(x)}
\Phi(\mathbf{x})] dV - \frac{1}{ \omega}
\int_{\Omega} \Phi(\mathbf{x}) (\nabla \cdot \mathbf{F(x)}) dV \nonumber \\
&= \frac{1}{\omega} \int_{\partial\Omega} \Phi(\mathbf{x})
(\mathbf{F(x)} \cdot \mathbf{n} ) dS - \frac{1}{ \omega}
\int_{\Omega} \Phi(\mathbf{x}) (\nabla \cdot \mathbf{F(x)}) dV  \nonumber \\
&= \frac{1}{\omega} \int_{\partial\Omega} \frac{\nabla g(\mathbf{x})
\cdot \mathbf{n}}{|\nabla g(\mathbf{x})|^2} \sigma_m(\mathbf{x})
J_{\nu+m+1}(\omega
g(\mathbf{x})) dS  \nonumber \\
&~~~~~ - \frac{1}{ \omega} \int_{\Omega} \sigma_{m+1}(\mathbf{x})
J_{\nu+m+1}(\omega g(\mathbf{x})) dV, \nonumber
\end{align}
where we have used Gauss's divergence theorem in the second step.
Iterating this process from $m=0$ to $m=s$, we arrive at
\begin{align}
\mathcal{H}_{\nu}[f] &= - \sum_{k=1}^{s} \frac{1}{(-\omega)^{k}}
\int_{\partial\Omega} \frac{\nabla g(\mathbf{x})\cdot
\vec{n}}{|\nabla g(\mathbf{x})|^2} \sigma_{k-1}(\mathbf{x})
J_{\nu+k}(\omega g(\mathbf{x})) dS \nonumber \\
&~~~~~~~~~~ + \frac{1}{(-\omega)^{s}} \int_{\Omega}
\sigma_{s}(\mathbf{x}) J_{\nu+s}(\omega g(\mathbf{x})) dV. \nonumber
\end{align}
Letting $s\rightarrow\infty$ gives the desired result.
\end{proof}

The expansion \eqref{eq:MultiAsyExp} is a multivariate
generalization of \eqref{eq:AsymCaseOne} and it shows that the value
of $\mathcal{H}_{\nu}[f]$ over the region $\Omega$ can be
asymptotically determined by integrals on the boundaries of
$\Omega$. When $\Omega$ has piecewise smooth boundaries such as the
$d$-dimensional simplex, those integrals on the right hand side of
\eqref{eq:MultiAsyExp} can be expanded repeatedly by using
integration by parts until they are expressed by using point values
and derivatives at the vertices. We expect that the result of
Theorem \ref{thm:MultiAsyExp} might play an important role in the
asymptotic and numerical study of multivariate Hankel transform.

\section{Conclusions}\label{sec:con}
In the present paper, we presented a unified framework for
asymptotic analysis and the computation of the finite Hankel
transform. Asymptotic expansions of the transform were established
in the presence of critical points, e.g., zeros and stationary
points, and subsequently two schemes for computing the transform
numerically were developed. We provided a rigorous error analysis
and obtained asymptotic error estimates for these two methods.

We also showed that the analysis given here can be generalized to
the multivariate setting. 
This result is only the first step in a long journey and much effort
will be needed to understand the asymptotic and numerical methods of
the multivariate Hankel transform, especially in the presence of
zeros, stationary and resonance
points. 

\appendix
\section[Appendix]{The moments for the simplest oscillator $g(x)=x$}\label{sec:Mom}
For the special case where $g(x)=x$ and $[a,b]=[0,1]$, we consider
the formula for the standard moments of the Hankel transform, i.e.,
\begin{align}\label{lem:Mom}
\mathcal{H}_{\nu}[x^{\mu}] = \int_{0}^{1} x^{\mu} J_{\nu}(\omega
x)dx,
\end{align}
where $\omega>0$ and $\Re(\mu+\nu)>-1$.

From \cite[p.~676]{gradshteyn2007table}, we know that
\begin{align}\label{eq:MomLom}
\mathcal{H}_{\nu}[x^{\mu}] &= \frac{2^{\mu}
\Gamma(\frac{\nu+\mu+1}{2})}{\omega^{\mu+1}
\Gamma(\frac{\nu-\mu+1}{2})} + \frac{(\mu+\nu-1) J_{\nu}(\omega)
S_{\mu-1,\nu-1}(\omega) - J_{\nu-1}(\omega)
S_{\mu,\nu}(\omega)}{\omega^{\mu}},
\end{align}
where $\Gamma(z)$ is the gamma function and $S_{\mu,\nu}(z)$ is the
Lommel function. As $z\rightarrow\infty$, the Lommel function admits
the following asymptotic expansion
\cite[p.~947]{gradshteyn2007table}
\begin{align}\label{eq:LomAsy}
S_{\mu,\nu}(z) \sim z^{\mu-1} \sum_{m=0}^{\infty}(-1)^m
\left(\frac{1-\mu+\nu}{2} \right)_m  \left(\frac{1-\mu-\nu}{2}
\right)_m \left(\frac{z}{2} \right)^{-2m}.
\end{align}
In fact, a full asymptotic expansion of $\mathcal{H}_{\nu}[x^{\mu}]$
can be derived by combining \eqref{eq:MomLom} and \eqref{eq:LomAsy}
together with the asymptotic expansion of the Bessel function.

As a direct consequence of \eqref{eq:MomLom}, we have the following
estimate.
\begin{lemma}\label{lem:EstimMom}
Assume that $\Re(\mu+\nu)>-1$. Then, for $\omega\rightarrow\infty$,
\begin{align}\label{def:MomAsym1}
\mathcal{H}_{\nu}[x^{\mu}] = \left\{
                    \begin{array}{ll}
                     \mathcal{O}( \omega^{-\min\left\{\mu,\frac{1}{2}\right\}-1} ),   & \hbox{if $\mu-\nu\neq1,3,5,\ldots$,} \\
                     [10pt]
                     \mathcal{O}( \omega^{-\frac{3}{2}} ),   & \hbox{if $\mu-\nu = 1,3,5,\ldots$.}
                    \end{array}
                  \right.
\end{align}
\end{lemma}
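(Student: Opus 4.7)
The proof will follow directly from the closed form \eqref{eq:MomLom}, which splits $\mathcal{H}_{\nu}[x^{\mu}]$ into an algebraic part
\[
T_1(\omega) = \frac{2^{\mu}\Gamma(\tfrac{\nu+\mu+1}{2})}{\omega^{\mu+1}\Gamma(\tfrac{\nu-\mu+1}{2})}
\]
and a Lommel/Bessel part
\[
T_2(\omega) = \frac{(\mu+\nu-1)J_{\nu}(\omega)S_{\mu-1,\nu-1}(\omega) - J_{\nu-1}(\omega)S_{\mu,\nu}(\omega)}{\omega^{\mu}}.
\]
The plan is to bound each piece separately and then combine.

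For $T_1(\omega)$, the observation is that $1/\Gamma$ is entire and vanishes at the non-positive integers. Thus $T_1(\omega) = \mathcal{O}(\omega^{-\mu-1})$ whenever $\tfrac{\nu-\mu+1}{2}$ is not a non-positive integer, i.e.\ whenever $\mu-\nu\notin\{1,3,5,\ldots\}$, and $T_1(\omega)\equiv 0$ otherwise. This gives the mechanism that switches between the two cases of the lemma.

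For $T_2(\omega)$, I would insert the leading-order Lommel asymptotics \eqref{eq:LomAsy}, namely $S_{\mu,\nu}(\omega)=\mathcal{O}(\omega^{\mu-1})$ and $S_{\mu-1,\nu-1}(\omega)=\mathcal{O}(\omega^{\mu-2})$, together with the classical decay $J_{\nu}(\omega), J_{\nu-1}(\omega)=\mathcal{O}(\omega^{-1/2})$ from \cite[Eqn.~10.17.2]{olver2010nist}. Multiplying out, the numerator is dominated by the term $J_{\nu-1}(\omega)S_{\mu,\nu}(\omega)=\mathcal{O}(\omega^{\mu-3/2})$, and after dividing by $\omega^{\mu}$ we obtain $T_2(\omega)=\mathcal{O}(\omega^{-3/2})$ unconditionally.

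Combining the two estimates: when $\mu-\nu\notin\{1,3,5,\ldots\}$, the total behaves like $\mathcal{O}(\omega^{-\mu-1})+\mathcal{O}(\omega^{-3/2})=\mathcal{O}(\omega^{-\min\{\mu,1/2\}-1})$; when $\mu-\nu\in\{1,3,5,\ldots\}$, only $T_2(\omega)$ survives and the bound reduces to $\mathcal{O}(\omega^{-3/2})$. Both cases of \eqref{def:MomAsym1} follow. The only subtlety worth being careful about is verifying that the constants arising from the Lommel asymptotics are genuinely finite (which requires $\mu-\nu$ and $\mu+\nu$ to avoid certain integer values making the Pochhammer symbols degenerate, but even in those degenerate cases the series actually terminates, making the bound sharper rather than invalid); this is the only place the argument needs a brief case check rather than a one-line estimate.
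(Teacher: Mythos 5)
Your proposal is correct and follows essentially the same route as the paper: both use the closed form \eqref{eq:MomLom}, observe that the algebraic term $\mathcal{O}(\omega^{-\mu-1})$ vanishes precisely when $\tfrac{\nu-\mu+1}{2}$ is a non-positive integer (i.e.\ $\mu-\nu=1,3,5,\ldots$) via the zero of $1/\Gamma$, and bound the remaining part by $\mathcal{O}(\omega^{-3/2})$ using the Lommel asymptotics \eqref{eq:LomAsy} together with $J_{\nu}(\omega)=\mathcal{O}(\omega^{-1/2})$. Your treatment is simply a more detailed write-up of the paper's argument, including the harmless degenerate-Pochhammer remark.
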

\begin{proof}
When $\mu-\nu\neq1,3,5,\ldots$, the desired estimate follows by
combining \eqref{eq:MomLom} and \eqref{eq:LomAsy} together with the
fact that $J_{\nu}(x) = \mathcal{O}(x^{-\frac{1}{2}})$ as
$x\rightarrow\infty$. When $\mu-\nu=1,3,5,\ldots$, note that the
first term in \eqref{eq:MomLom} vanishes and therefore
\[
\mathcal{H}_{\nu}[x^{\mu}] = \frac{(\mu+\nu-1) J_{\nu}(\omega)
S_{\mu-1,\nu-1}(\omega) - J_{\nu-1}(\omega)
S_{\mu,\nu}(\omega)}{\omega^{\mu}}.
\]
The desired estimate follows by combining \eqref{eq:LomAsy} and the
fact that $J_{\nu}(x) = \mathcal{O}(x^{-\frac{1}{2}})$.
\end{proof}

\section[Appendix]{The evaluation of the modified moments}
We present here a recurrence relation for $\mu_{k,\nu}^{\omega}$
which are defined in \eqref{eq:ModifiedFilon}.
\begin{theorem}\label{thm:ModMomRecRel}
Assume that $g{'}(x)\neq0$ for $x\in[a,b]$. For $k\geq0$, we have
\begin{align}\label{eq:ModMomRecRel}
\mu_{k+2,\nu}^{\omega} &= \frac{\nu^2 - (k+1)^2}{\omega^2}
\mu_{k,\nu}^{\omega} + \mathcal{R}_{k,\nu}^{\omega},
\end{align}
where
\begin{align}
\mathcal{R}_{k,\nu}^{\omega} &=  \frac{1}{\omega} \left[ g(a)^{k+2}
\frac{J_{\nu-1}(\omega g(a)) - J_{\nu+1}(\omega g(a))}{2} -
g(b)^{k+2} \frac{J_{\nu-1}(\omega g(b)) - J_{\nu+1}(\omega g(b))}{2} \right] \nonumber \\
&~~~~~~~~~~~ + \frac{k+1}{\omega^2} \bigg[ g(b)^{k+1} J_{\nu}(\omega
g(b)) - g(a)^{k+1} J_{\nu}(\omega g(a)) \bigg]. \nonumber
\end{align}
\end{theorem}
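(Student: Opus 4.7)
The plan is to reduce the problem to a standard one about moments of the Bessel function against monomials, via the diffeomorphism $t=g(x)$, and then derive the recurrence by two applications of integration by parts based on the Bessel derivative identity \eqref{eq:DE} and its companion. Since $g'(x)\neq 0$ on $[a,b]$, the substitution $t=g(x)$ is valid and yields
\[
\mu_{k,\nu}^{\omega}=\int_{g(a)}^{g(b)} t^{k} J_{\nu}(\omega t)\,dt,
\]
so it suffices to establish the recurrence for $I_{k}(\omega):=\int_{\alpha}^{\beta} t^{k}J_{\nu}(\omega t)\,dt$ with arbitrary endpoints $\alpha,\beta$, and then specialize.

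The first integration by parts uses \eqref{eq:DE} in the form $\frac{d}{dt}[t^{\nu+1}J_{\nu+1}(\omega t)]=\omega t^{\nu+1}J_{\nu}(\omega t)$ to rewrite $t^{k+2}J_{\nu}(\omega t)=\frac{t^{k+1-\nu}}{\omega}\frac{d}{dt}[t^{\nu+1}J_{\nu+1}(\omega t)]$; this converts $I_{k+2}$ into a boundary term plus $-\tfrac{k+1-\nu}{\omega}\int_{\alpha}^{\beta}t^{k+1}J_{\nu+1}(\omega t)\,dt$. For the second integration by parts I would invoke the companion identity $\frac{d}{dt}[t^{-\nu}J_{\nu}(\omega t)]=-\omega t^{-\nu}J_{\nu+1}(\omega t)$ (an immediate consequence of \eqref{eq:DE} via the recurrence $J_{\nu}'=\tfrac{\nu}{z}J_{\nu}-J_{\nu+1}$), and write $t^{k+1}J_{\nu+1}(\omega t)=-\frac{t^{k+1+\nu}}{\omega}\frac{d}{dt}[t^{-\nu}J_{\nu}(\omega t)]$. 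Integrating by parts again gives a boundary term plus $\tfrac{k+1+\nu}{\omega}I_{k}$. Substituting back, the factor in front of $I_{k}$ is $-\tfrac{(k+1-\nu)(k+1+\nu)}{\omega^{2}}=\tfrac{\nu^{2}-(k+1)^{2}}{\omega^{2}}$, which is exactly the claimed coefficient.

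What remains is cosmetic: the two integrations by parts produce the boundary contribution
\[
\tfrac{1}{\omega}\bigl[t^{k+2}J_{\nu+1}(\omega t)\bigr]_{\alpha}^{\beta}+\tfrac{k+1-\nu}{\omega^{2}}\bigl[t^{k+1}J_{\nu}(\omega t)\bigr]_{\alpha}^{\beta},
\]
whereas the statement expresses the boundary in terms of $J_{\nu-1}-J_{\nu+1}$ and $J_{\nu}$. I would eliminate $J_{\nu+1}$ via the Bessel recurrence $J_{\nu+1}(z)=\tfrac{\nu}{z}J_{\nu}(z)-\tfrac{1}{2}[J_{\nu-1}(z)-J_{\nu+1}(z)]$; the $\tfrac{\nu}{z}J_{\nu}$ contribution combines cleanly with the existing $J_{\nu}$ boundary term (the coefficient simplifies from $k+1-\nu$ to $k+1$), while the remaining piece gives exactly the $\tfrac{J_{\nu-1}-J_{\nu+1}}{2}$ terms appearing in $\mathcal{R}_{k,\nu}^{\omega}$. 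There is no real obstacle here, only bookkeeping: the one point requiring care is tracking the powers $t^{k+1-\nu}$ and $t^{k+1+\nu}$ through the two integrations by parts so that they collapse to the pure powers $t^{k+1}$ and $t^{k+2}$ at the boundary, and verifying the sign after the algebraic identity used to convert $J_{\nu+1}$ at the endpoints; these are routine once carried out symbolically.
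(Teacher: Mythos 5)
Your proposal is correct: I checked the two integrations by parts and the boundary bookkeeping, and after the change of variable $t=g(x)$ the coefficient of $\mu_{k,\nu}^{\omega}$ and the remainder $\mathcal{R}_{k,\nu}^{\omega}$ come out exactly as stated (the conversion of the $J_{\nu+1}$ boundary terms via $J_{\nu-1}(z)+J_{\nu+1}(z)=\tfrac{2\nu}{z}J_{\nu}(z)$ does turn $k+1-\nu$ into $k+1$ and produces the $\tfrac{1}{2}(J_{\nu-1}-J_{\nu+1})$ terms with the right signs). Your route differs from the paper's: the paper works directly in the $x$ variable and substitutes Bessel's second-order equation $z^{2}w''+zw'+(z^{2}-\nu^{2})w=0$ into the integrand, then integrates by parts the $zw'$ and $z^{2}w''$ pieces, so its boundary terms emerge already in the form $\tfrac{dw}{dz}=\tfrac{1}{2}(J_{\nu-1}-J_{\nu+1})$ and no final recurrence-relation conversion is needed; you instead pass to pure moments $\int t^{k}J_{\nu}(\omega t)\,dt$ and use only the two first-order ladder identities $\tfrac{d}{dt}[t^{\nu+1}J_{\nu+1}(\omega t)]=\omega t^{\nu+1}J_{\nu}(\omega t)$ and $\tfrac{d}{dt}[t^{-\nu}J_{\nu}(\omega t)]=-\omega t^{-\nu}J_{\nu+1}(\omega t)$. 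Your version is arguably more elementary (no second derivatives, and it stays within the machinery of \eqref{eq:DE} used throughout the paper), and the change of variable makes the algebra cleaner. What the paper's version buys is that it only ever manipulates integer powers $g(x)^{k}$, $g(x)^{k+1}$, $g(x)^{k+2}$, so it applies verbatim when $g$ changes sign on $[a,b]$ (the theorem assumes only $g'\neq0$); your intermediate factors $t^{\nu+1}$ and $t^{-\nu}$ require a one-line remark about branches when $\nu\notin\mathbb{Z}$ and $g$ takes negative values (harmless, since $t^{-\nu}J_{\nu}(\omega t)$ is entire and the branch factors cancel in $t^{k+1-\nu}\cdot t^{\nu+1}$, but it should be said).
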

\begin{proof}
By setting $z = \omega g(x)$ and $w(z) = J_{\nu}(z)$. Recall the
Bessel's equation
\begin{align}
z^2 \frac{d^2 w}{dz^2} + z \frac{dw}{dz} + (z^2 - \nu^2)w = 0,
\nonumber
\end{align}
it follows that
\begin{align}\label{eq:RecStepOne}
\mu_{k+2,\nu}^{\omega} 
&= \frac{1}{\omega^2} \int_{a}^{b} g{'}(x) g(x)^{k} z^2
J_{\nu}(z)dx \nonumber \\
&= \frac{1}{\omega^2} \int_{a}^{b} g{'}(x) g(x)^{k} \left( \nu^2
w(z) - z \frac{dw}{dz} - z^2 \frac{d^2w}{dz^2} \right) dx
\nonumber \\
&= \frac{\nu^2}{\omega^2} \mu_{k,\nu}^{\omega} - \frac{1}{\omega^2}
\int_{a}^{b} g{'}(x) g(x)^{k} z \frac{dw}{dz} dx -
\frac{1}{\omega^2} \int_{a}^{b} g{'}(x) g(x)^{k} z^2
\frac{d^2w}{dz^2} dx.
\end{align}
For the first integral on the right-hand side of
\eqref{eq:RecStepOne}, using integration by part, we have
\begin{align}\label{eq:MuPartOne}
\int_{a}^{b} g{'}(x) g(x)^{k} z \frac{dw}{dz} dx &= \int_{a}^{b}
g(x)^{k+1} \frac{d}{dx}\left( J_{\nu}(\omega g(x)\right) dx
\nonumber \\
&= g(b)^{k+1} J_{\nu}(\omega g(b)) - g(a)^{k+1} J_{\nu}(\omega g(a))
\nonumber \\
&~~~~~ - (k+1) \int_{a}^{b} g{'}(x) g(x)^k J_{\nu}(\omega g(x)) dx
\nonumber \\
&= g(b)^{k+1} J_{\nu}(\omega g(b)) - g(a)^{k+1} J_{\nu}(\omega g(a))
- (k+1) \mu_{k,\nu}^{\omega}.
\end{align}
Similarly, for the second integral on the right-hand side of
\eqref{eq:RecStepOne}, we have
\begin{align}\label{eq:MuPartTwo}
\int_{a}^{b} g{'}(x) g(x)^{k} z^2 \frac{d^2w}{dz^2} dx &= \omega
\int_{a}^{b} g(x)^{k+2}
\frac{d}{dx}\left( \frac{dw}{dz} \right) dx \nonumber \\
&= \omega \left[ g(x)^{k+2} \frac{dw}{dz} \right]_{a}^{b} - \omega
(k+2) \int_{a}^{b} g{'}(x) g(x)^{k+1} \frac{dw}{dz} dx.
\end{align}
Applying the following formula $\frac{d}{dz} J_{\nu}(z) =
\frac{1}{2} ( J_{\nu-1}(z) - J_{\nu+1}(z) )$ \cite[Eqn.
10.6.1]{olver2010nist} and using integration by part to the last
integral in \eqref{eq:MuPartTwo}, we arrive at
\begin{align}\label{eq:MuPartThree}
\int_{a}^{b} g{'}(x) g(x)^{k} z^2 \frac{d^2w}{dz^2} dx &= \omega
\left[ g(b)^{k+2} \frac{J_{\nu-1}(\omega g(b)) -
J_{\nu+1}(\omega g(b))}{2} \right. \nonumber \\
&~~~~~~ \left. - g(a)^{k+2} \frac{J_{\nu-1}(\omega g(a)) -
J_{\nu+1}(\omega g(a))}{2} \right] \nonumber \\
&~~~~~~ - (k+2) \left[ g(b)^{k+1} J_{\nu}(\omega g(b)) - g(a)^{k+1}
J_{\nu}(\omega g(a)) \right] \nonumber \\
&~~~~~~ + (k+2)(k+1) \mu_{k,\nu}^{\omega}.
\end{align}
Substituting \eqref{eq:MuPartOne} and \eqref{eq:MuPartThree} into
\eqref{eq:RecStepOne}, we obtain the desired result.
\end{proof}

When using Theorem \ref{thm:ModMomRecRel}, two initial values
$\mu_{0,\nu}^{\omega}$ and $\mu_{1,\nu}^{\omega}$ are required. We
proceed as follows: By setting $y=g(x)$, we obtain
\begin{align}
\mu_{k,\nu}^{\omega} &= \int_{g(a)}^{g(b)} y^k J_{\nu}(\omega y) dy
= \int_{0}^{g(b)} y^k J_{\nu}(\omega y) dy - \int_{0}^{g(a)} y^k
J_{\nu}(\omega y) dy.  \nonumber
\end{align}
For these two integrals in the last equality, by scaling their
intervals into $[0,1]$, we have
\begin{align}\label{eq:BesselMomF}
\mu_{k,\nu}^{\omega} &= g(b)^{k+1} \int_{0}^{1} y^k J_{\nu}(\omega
g(b) y) dy - g(a)^{k+1} \int_{0}^{1} y^k J_{\nu}(\omega g(a) y) dy.
\end{align}
Finally, these two integrals on the right-hand side of
\eqref{eq:BesselMomF} can be evaluated by \eqref{eq:MomLom}. We
remark that, under the condition $ |\nu|\leq\omega$, the recurrence
relation \eqref{eq:ModMomRecRel} is forward stable when $k\leq
\sqrt{|\nu|^2+\omega^2}-1$.

The above result can be extended to the modified moments
$\widehat{\mu}_{k,\nu}^{\omega}$.
\begin{theorem}\label{thm:ModMomStatII}
Assume that $g(x)$ has a stationary point of type II and of order
$r$ at $x=a$. Furthermore, we assume that $g^{(r+1)}(x)\neq0$ for
$x\in[a,b]$. For $k\geq0$, we have
\begin{align}\label{eq:ModMomRecRelII}
\widehat{\mu}_{k+2r+2,\nu}^{\omega} = \frac{1}{\omega^2} \left[
\nu^2 - \left(\frac{k+1}{r+1} \right)^2 \right]
\widehat{\mu}_{k,\nu}^{\omega} +
\widehat{\mathcal{R}}_{k,\nu}^{\omega} \times \left\{
                    \begin{array}{ll}
                     1, & \hbox{if $g^{(r+1)}(x)>0$,} \\
                     [10pt]
                     e^{i(\nu+1)\pi},  & \hbox{if $g^{(r+1)}(x)<0$,}
                    \end{array}
                  \right.
\end{align}
where
\begin{align}
\widehat{\mathcal{R}}_{k,\nu}^{\omega} &= \left( \frac{k+1}{r+1}
\right) \frac{J_{\nu}(\omega g(b))}{\omega^2} g(b)^{\frac{k+1}{r+1}}
- \frac{J_{\nu-1}(\omega g(b)) - J_{\nu+1}(\omega g(b))}{2\omega}
g(b)^{\frac{k+1}{r+1}+1}. \nonumber
\end{align}
\end{theorem}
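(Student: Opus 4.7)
The plan is to mimic the proof of Theorem \ref{thm:ModMomRecRel}, replacing the integer power $g(x)^k$ in $\varphi_k$ by the fractional power $g(x)^{(k-r)/(r+1)}$ (or its negated analogue) in $\widehat{\varphi}_k$, and paying extra attention to the new boundary at $x=a$ where $g(a)=0$. Throughout I write $\gamma := (k+1)/(r+1)$, so that in the case $g^{(r+1)}>0$ we have $\widehat{\varphi}_k(x) = g'(x)\,g(x)^{\gamma-1}$ and $\widehat{\varphi}_{k+2r+2}(x) = g'(x)\,g(x)^{\gamma+1}$, differing by the factor $g(x)^2 = z^2/\omega^2$ with $z = \omega g(x)$.

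First I would treat the case $g^{(r+1)}(x) > 0$ on $[a,b]$, in which $g$ is strictly increasing with $g(a)=0$ and $g(b)>0$. Bessel's ODE, written as $z^2 J_\nu(z) = \nu^2 J_\nu(z) - z J_\nu'(z) - z^2 J_\nu''(z)$, converts $\widehat{\mu}_{k+2r+2,\nu}^{\omega}$ into $\tfrac{\nu^2}{\omega^2}\widehat{\mu}_{k,\nu}^{\omega}$ minus two integrals $I_1, I_2$ involving the first and second derivatives of $J_\nu(z)$. Using the chain-rule identities $\omega g'(x) J_\nu'(z) = \tfrac{d}{dx} J_\nu(\omega g(x))$ and $\omega g'(x) J_\nu''(z) = \tfrac{d}{dx} J_\nu'(\omega g(x))$, both $I_1$ and $I_2$ become integrals of $g(x)^{\gamma}$ and $g(x)^{\gamma+1}$ against $d J_\nu(\omega g(x))$ and $d J_\nu'(\omega g(x))$, respectively. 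A single integration by parts on $I_1$ and a double integration by parts on $I_2$, followed by applying $2 J_\nu'(z) = J_{\nu-1}(z) - J_{\nu+1}(z)$ to the residual boundary derivative, collapse everything into a combination of $\widehat{\mu}_{k,\nu}^{\omega}$ and boundary values at $x=b$; the coefficient of $\widehat{\mu}_{k,\nu}^{\omega}$ telescopes to $(\nu^2 - \gamma^2)/\omega^2$, and the $x=b$ contributions assemble exactly into $\widehat{\mathcal{R}}_{k,\nu}^{\omega}$.

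For the case $g^{(r+1)}(x) < 0$, strict negativity of $g^{(r+1)}$ propagates down through $g^{(r)},\ldots,g'$ to give $g'(x) < 0$ on $(a,b]$, so $g$ is strictly decreasing with $g(b) < 0$. The substitution $t = -g(x)$ converts $\widehat{\mu}_{k,\nu}^{\omega}$ into an integral over $[0,-g(b)]$ of a function of $J_\nu(-\omega t)$, and invoking $J_\nu(-z) = e^{i\nu\pi} J_\nu(z)$ \cite[Eqn.~10.11.1]{olver2010nist} reduces matters to the previously treated positive case. Tracking the overall sign together with the phase produces the factor $-e^{i\nu\pi} = e^{i(\nu+1)\pi}$ on the boundary remainder, while the coefficient of $\widehat{\mu}_{k,\nu}^{\omega}$ is unchanged because the phase cancels.

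The main obstacle is to justify that every boundary contribution at $x=a$ actually vanishes under the integrations by parts. Such terms have the form $g(a)^{\alpha} J_\nu^{(j)}(\omega g(a))$ for $\alpha \in \{\gamma,\gamma+1\}$ and $j\in\{0,1\}$; invoking the small-argument asymptotics $J_\nu^{(j)}(z) = O(z^{\nu-j})$ as $z\to 0$, each such term is of order $g(a)^{\alpha+\nu-j}$, and the delicate case $\alpha = \gamma+1$, $j=1$ with $k=0$ requires precisely the standing hypothesis $\Re(\nu) > -1/(r+1)$ for positivity. This check, together with the algebraic telescoping that condenses three appearances of $\widehat{\mu}_{k,\nu}^{\omega}$ into the single coefficient $\nu^2 - \left(\tfrac{k+1}{r+1}\right)^2$, constitutes the technical core of the argument.
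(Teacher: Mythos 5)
Your proposal is correct and follows exactly the route the paper intends: the paper's own proof is just the remark that it is ``similar to that of Theorem~\ref{thm:ModMomRecRel}'', i.e.\ Bessel's equation plus repeated integration by parts, which is precisely what you carry out with the fractional exponent $\gamma=(k+1)/(r+1)$, and the resulting coefficient $(\nu^2-\gamma^2)/\omega^2$ and boundary remainder $\widehat{\mathcal{R}}_{k,\nu}^{\omega}$ check out. You also correctly supply the only genuinely new ingredients relative to Theorem~\ref{thm:ModMomRecRel} --- the vanishing of the $x=a$ boundary terms, which uses $g(a)=0$ together with the standing assumption $\Re(\nu)>-\tfrac{1}{r+1}$, and the reflection identity $J_{\nu}(-z)=e^{i\nu\pi}J_{\nu}(z)$ yielding the phase $e^{i(\nu+1)\pi}$ in the case $g^{(r+1)}<0$.
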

\begin{proof}
The proof is similar to that of Theorem \ref{thm:ModMomRecRel}. We
omit the details.
\end{proof}

When using Theorem \ref{thm:ModMomStatII}, the first $2r+2$ initial
values $\{\widehat{\mu}_{k,\nu}^{\omega}\}_{k=0}^{2r+1}$ are
required. For example, in the case of $g^{(r+1)}(x)>0$ where
$x\in[a,b]$, by setting $t=g(x)/g(b)$, we have
\begin{align}
\widehat{\mu}_{k,\nu}^{\omega} = g(b)^{\frac{k+1}{r+1}} \int_{0}^{1}
t^{\frac{k-r}{r+1}} J_{\nu}(\omega g(b) t) dt. \nonumber
\end{align}
The integral on the right-hand side can be calculated by using
\eqref{eq:MomLom}. Moreover, under the condition $|\nu|\leq \omega$,
the recurrence relation \eqref{eq:ModMomRecRelII} is forward stable
when $k\leq (r+1)(\sqrt{|\nu|^2+\omega^2}-1)$.

\section[Appendix]{Dependence of $\widehat{\sigma}_k[f](x)$ on $f(x)$ at the stationary point}
Let $\widehat{\sigma}_k[f](x)$ be defined as in
\eqref{eq:FulSigSta}. We aim at studying the dependence of
$\widehat{\sigma}_k[f](x)$ on $f(x)$ and its derivatives at the
stationary point $x=\zeta$. We start from a simple lemma.
\begin{lemma}\label{lem:RatTaySer}
Suppose that
\begin{align}
N(t) = \sum_{j=0}^{\infty} a_j t^j, \quad  D(t) =
\sum_{j=0}^{\infty} b_j t^j, \nonumber
\end{align}
where $b_0\neq0$. Furthermore, suppose that
\[
R(t) = \frac{N(t)}{D(t)} = \sum_{j=0}^{\infty} c_j t^j.
\]
Then, each $c_j$ is a linear combination of $a_0,a_1,\ldots,a_j$.
\end{lemma}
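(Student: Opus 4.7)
The plan is to exploit the identity $N(t) = R(t) D(t)$ and extract a recursion for $c_j$ by matching coefficients. Multiplying the two power series on the right and comparing with the expansion of $N(t)$ gives, for every $j \geq 0$, the Cauchy product relation
\begin{equation*}
a_j = \sum_{k=0}^{j} c_k\, b_{j-k}.
\end{equation*}
Since $b_0 \neq 0$, this can be solved explicitly for $c_j$, yielding
\begin{equation*}
c_j = \frac{1}{b_0}\left( a_j - \sum_{k=0}^{j-1} c_k\, b_{j-k} \right), \qquad j \geq 0,
\end{equation*}
with the convention that the empty sum is zero, so that $c_0 = a_0/b_0$.

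With this recursion in hand, I would then proceed by induction on $j$. The base case $j = 0$ is immediate from $c_0 = a_0/b_0$, which is trivially a linear combination of $a_0$. For the inductive step, assume that $c_0, c_1, \ldots, c_{j-1}$ are each linear combinations of $a_0, a_1, \ldots, a_{j-1}$ (with coefficients depending only on $b_0, b_1, \ldots, b_{j-1}$). Substituting these expressions into the recursion above expresses $c_j$ as $a_j/b_0$ minus a linear combination of $a_0, \ldots, a_{j-1}$, hence as a linear combination of $a_0, a_1, \ldots, a_j$. This closes the induction.

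There is no serious obstacle here; the only point worth being careful about is that the convergence of the power series $R(t)$ is not claimed in the statement, so the argument should be read purely at the level of formal power series. Since the recursion only uses finitely many coefficients at each step and the hypothesis $b_0 \neq 0$ guarantees that $D(t)$ is a unit in the ring of formal power series, this causes no difficulty.
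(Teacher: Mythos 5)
Your proof is correct and follows essentially the same route as the paper: both start from the Cauchy product relation $a_j = \sum_{k=0}^{j} c_k b_{j-k}$ and exploit $b_0 \neq 0$ together with the triangular structure, the paper by inverting a lower triangular matrix and you by the equivalent forward-substitution recursion with induction on $j$. Your remark about working with formal power series is a sensible clarification but changes nothing of substance.
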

\begin{proof}
Note that $N(t)=D(t)R(t)$,  we have
\begin{align}\label{eq:CauchyProd}
a_j = \sum_{k=0}^{j} c_k b_{j-k}, \quad j\geq0.
\end{align}
Let $\mathbf{a} = [a_0,\cdots,a_j]^T$ and $\mathbf{c} =
[c_0,\cdots,c_j]^T$ where $\cdot^{T}$ denotes the transpose. We can
rewrite \eqref{eq:CauchyProd} as
\[
\mathbf{a} = A \mathbf{c},
\]
where
\begin{align}
A = \left(
      \begin{array}{cccc}
        b_0 &  &  &  \\
        b_1 & b_0 &  &  \\
        \vdots & \vdots & \ddots &  \\
        b_j & b_{j-1} & \cdots & b_0 \\
      \end{array}
    \right). \nonumber
\end{align}
It is clear to see that $A$ is a lower triangular matrix and is
invertible. Note that $\mathbf{c} = A^{-1} \mathbf{a}$ and $A^{-1}$
is also a lower triangular matrix. The desired result follows.
\end{proof}

We now prove the following result.
\begin{theorem}\label{thm:DepRelSigma}
Let $f(x),g(x)$ be analytic functions inside a neighborhood of the
interval $[a,b]$. Moreover, assume that $g(x)$ has a stationary
point of type II and of order $r$ at $\zeta\in[a,b]$ and
$g{'}(x)\neq0$ for $x\in[a,b]\backslash\{\zeta\}$. Then, the value
$\widehat{\sigma}_k[f]^{(j)}(\zeta)$ is a linear combination of the
values
$f^{(r+1+j)}(\zeta),f^{(r+2+j)}(\zeta),\ldots,f^{(k(r+1)+j)}(\zeta)$
for all $k,j\geq0$.
\end{theorem}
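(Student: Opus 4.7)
The plan is to argue by induction on $k$, leveraging the local analytic structure of $g$ at the stationary point $\zeta$. The base case $k=0$ is trivial since $\widehat{\sigma}_0[f]^{(j)}(\zeta)=f^{(j)}(\zeta)$. For the induction step, I would first record the factorizations near $\zeta$: because $g^{(i)}(\zeta)=0$ for $0\le i\le r$ and $g^{(r+1)}(\zeta)\neq 0$, one may write $g(x)=(x-\zeta)^{r+1}G(x)$, $g'(x)=(x-\zeta)^{r}G_1(x)$, and $g''(x)=(x-\zeta)^{r-1}G_2(x)$, with $G,G_1,G_2$ analytic near $\zeta$ and $G(\zeta),G_1(\zeta)\neq 0$. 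Correspondingly, the Taylor remainder $N_k(x):=\widehat{\sigma}_{k-1}[f](x)-T_r[\widehat{\sigma}_{k-1}](x,\zeta)$ vanishes to order at least $r+1$ at $\zeta$, so $N_k(x)=(x-\zeta)^{r+1}h_{k-1}(x)$ with $h_{k-1}$ analytic and
\begin{equation*}
h_{k-1}^{(i)}(\zeta)=\frac{i!}{(r+1+i)!}\,\widehat{\sigma}_{k-1}[f]^{(r+1+i)}(\zeta).
\end{equation*}

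Next, I would expand the definition \eqref{eq:FulSigSta} explicitly as
\begin{equation*}
\widehat{\sigma}_k[f](x)=\frac{N_k'(x)}{g'(x)}-\frac{(\nu+k)\,N_k(x)}{g(x)}-\frac{N_k(x)\,g''(x)}{g'(x)^2},
\end{equation*}
and substitute the factorizations above. The prescribed orders of vanishing of $g,g',g''$ cancel against the order-$(r+1)$ zero of $N_k$, so each of the three terms is analytic at $\zeta$, and the total reorganizes as
\begin{equation*}
\widehat{\sigma}_k[f](x)=\widetilde{A}_k(x)\,h_{k-1}(x)+(x-\zeta)\,\widetilde{B}_k(x)\,h_{k-1}'(x),
\end{equation*}
with $\widetilde{A}_k,\widetilde{B}_k$ analytic near $\zeta$. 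The explicit $(x-\zeta)$ factor multiplying $\widetilde{B}_k$ is the crucial structural feature, as it is what ultimately caps the order of derivative of $h_{k-1}$ that can appear at $\zeta$.

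The induction step now follows from the Leibniz rule. Because of the explicit $(x-\zeta)$ prefactor, the coefficient of the would-be highest-order term $h_{k-1}^{(j+1)}(\zeta)$ vanishes upon evaluation at $\zeta$, so $\widehat{\sigma}_k[f]^{(j)}(\zeta)$ is a linear combination of $h_{k-1}^{(i)}(\zeta)$ for $0\le i\le j$, equivalently of $\widehat{\sigma}_{k-1}[f]^{(r+1+i)}(\zeta)$ for $0\le i\le j$. Feeding each of these into the induction hypothesis at the shifted derivative index $r+1+i$ yields combinations of $f^{(m)}(\zeta)$ whose largest index is $(k-1)(r+1)+(r+1+i)=k(r+1)+i\le k(r+1)+j$, which is the claimed upper endpoint. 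Lemma \ref{lem:RatTaySer} serves as the algebraic device for converting Taylor coefficients of the analytic ratios hidden inside $\widetilde{A}_k$ and $\widetilde{B}_k$ into Taylor coefficients of their analytic numerators and denominators, so that the linear-combination structure is preserved at each stage.

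The main obstacle is bookkeeping. One must carefully verify analyticity of each of the three terms in the expanded formula — the orders of vanishing of $g$, $g'$, and $(g')^2$ at $\zeta$ must interact with the order-$(r+1)$ zero of $N_k$ exactly as claimed — and then track the telescoping of index ranges through the induction to arrive at the stated bound $k(r+1)+j$. The subtle point is identifying that $\widetilde{B}_k$ enters with an explicit $(x-\zeta)$ prefactor rather than through an accidental cancellation at $\zeta$; this ensures the induction does not lose a derivative at each step and thus delivers the sharp upper bound.
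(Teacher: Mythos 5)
Your proposal is correct and follows essentially the same route as the paper: induction on $k$ using the recursion $\widehat{\sigma}_k[f]=\frac{N_k'}{g'}-(\nu+k)\frac{N_k}{g}-\frac{N_k g''}{(g')^2}$, with the order-$(r+1)$ vanishing of the Taylor remainder cancelling the zeros of $g$, $g'$, $g''$ at $\zeta$ and the explicit $(x-\zeta)$ factor on the derivative term capping the index shift at $r+1$ per step. The only difference is presentational: you handle the local divisions by analytic factorization plus the Leibniz rule, whereas the paper performs the same cancellation at the level of power series and invokes Lemma \ref{lem:RatTaySer} for the triangular dependence of the quotient coefficients.
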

\begin{proof}
With the assumption on $g(x)$, we have $g^{(j)}(\zeta) = 0$ for
$j=0,\ldots,r$ and $g^{(r+1)}(\zeta)\neq0$. By Taylor expansion, we
have
\begin{align}\label{eq:TaylorSeries}
f(x) = \sum_{j=0}^{\infty} \frac{f_j}{j!} (x-\zeta)^j, \quad g(x) =
\sum_{j=r+1}^{\infty} \frac{g_j}{j!} (x-\zeta)^j,
\end{align}
where $f_j = f^{(j)}(\zeta)$ and $g_j = g^{(j)}(\zeta)$.

We will prove the assertion by induction. For $k=0$, the assertion
is true since $\widehat{\sigma}_0[f](x)=f(x)$. Now we suppose the
assertion is true for some $k$, i.e.,
$\widehat{\sigma}_k[f]^{(j)}(\zeta)$ is a linear combination of
$f_{r+1},f_{r+2},\ldots,f_{k(r+1)+j}$ for all $j\geq0$. We will
prove it is true for $k+1$. By the Taylor expansion of
$\widehat{\sigma}_k[f](x)$ about $x=\zeta$, we have
\begin{align}\label{eq:SigmaKTay}
\widehat{\sigma}_k[f](x) = \sum_{j=0}^{\infty} \frac{D_{k,j}}{j!}
(x-\zeta)^j,
\end{align}
where $D_{k,j} = \widehat{\sigma}_k[f]^{(j)}(\zeta)$. In the
following, we consider the derivation of the Taylor expansion of
$\widehat{\sigma}_{k+1}[f](x)$ about $x=\zeta$ from
\eqref{eq:SigmaKTay}. By combining \eqref{eq:FulSigSta} and
\eqref{eq:progression}, we easily get
\begin{align}\label{eq:AppendSigmaK}
\widehat{\sigma}_{k+1}[f](x) &=
\frac{d}{dx}\left[\frac{\widehat{\sigma}_k[f](x) -
T_{r}[\widehat{\sigma}_k[f]](x,\zeta) }{g{'}(x)} \right] - (\nu+k+1)
\frac{\widehat{\sigma}_k[f](x) -
T_{r}[\widehat{\sigma}_k[f]](x,\zeta) }{g(x)}.
\end{align}
For the term inside the bracket, using \eqref{eq:TaylorSeries} and
\eqref{eq:SigmaKTay} and expanding it about $x=\zeta$ we obtain
\begin{align}\label{eq:T4}
\frac{ \widehat{\sigma}_k[f](x) -
T_{r}[\widehat{\sigma}_k[f]](x,\zeta) }{g{'}(x)} &=
\frac{\displaystyle (x-\zeta)\sum_{j=0}^{\infty}
\frac{D_{k,j+r+1}}{(j+r+1)!} (x-\zeta)^{j} }{\displaystyle
\sum_{j=0}^{\infty} \frac{g_{j+r+1}}{(j+r)!} (x-\zeta)^{j}}
\nonumber \\
& = (x-\zeta) \sum_{j=0}^{\infty} \widetilde{E}_{j} (x-\zeta)^j.
\end{align}
By Lemma \ref{lem:RatTaySer}, we see that each $\widetilde{E}_j$ is
a linear combination of the values $D_{k,r+1},\ldots,D_{k,r+1+j}$.
Following the same line, for the second term in
\eqref{eq:AppendSigmaK}, we have
\begin{align}\label{eq:T5}
\frac{\displaystyle \widehat{\sigma}_k[f](x) -
T_{r}[\widehat{\sigma}_k[f]](x,\zeta) }{g(x)} &= \frac{\displaystyle
\sum_{j=0}^{\infty} \frac{D_{k,j+r+1}}{(j+r+1)!} (x-\zeta)^j
}{\displaystyle \sum_{j=0}^{\infty}\frac{g_{r+1+j}}{(r+1+j)!}
(x-\zeta)^j} = \sum_{j=0}^{\infty} \widehat{E}_j (x-\zeta)^j,
\end{align}
Again, by Lemma \ref{lem:RatTaySer} we see that each $\widehat{E}_j$
is a linear combination of $D_{k,r+1},\ldots,D_{k,r+1+j}$.
Substituting \eqref{eq:T4} and \eqref{eq:T5} into
\eqref{eq:AppendSigmaK}, we get
\begin{align}
\widehat{\sigma}_{k+1}[f](x) = \sum_{j=0}^{\infty} \left[ (j+1)
\widehat{E}_j - (\nu+k+1) \widehat{E}_j \right] (x-\zeta)^j,
\nonumber
\end{align}
or equivalently,
\[
\frac{\sigma_{k+1}[f]^{(j)}(\zeta)}{j!} = (j+1) \widetilde{E}_j -
(\nu+k+1) \widehat{E}_j, \quad j\geq0.
\]
Clearly, we can deduce that $\sigma_{k+1}[f]^{(j)}(\zeta)$ is a
linear combination of $D_{k,r+1},\ldots,D_{k,r+1+j}$ and thus it is
a linear combination of $f_{r+1},\ldots,f_{(k+1)(r+1)+j}$. This
proves the case $k+1$. By induction, the assertion is true for all
$k\geq0$. This complete the proof.
\end{proof}

\section*{Acknowledgements}
This work was supported by the National Natural Science Foundation
of China under grant 11671160. The author would like to thank Daan
Huybrechs for supporting his visit to University of Leuven where
parts of this work were developed. The author also thanks Arieh
Iserles for valuable suggestions on the modified moments and
Shuhuang Xiang for helpful discussions.

\bibliographystyle{abbrv}
\bibliography{hankel}



\end{document}